\newtheorem{theorem}{Theorem}[section]
\newtheorem{corollary}{Corollary}
\newtheorem{lemma}[theorem]{Lemma}
\newtheorem{proposition}{Proposition}
\theoremstyle{definition}
\newtheorem{remark}{Remark}
\def\N{\mathbb{N}}
\def\R{\mathbb{R}}
\def\io{\int\limits_{\Omega}}
\def\div{{\rm div}}
\title[An elliptic problem with
degenerate coercivity]{
An elliptic problem with
degenerate coercivity
and a singular quadratic gradient lower order term
}
\author[Gisella Croce]{}
\subjclass{35J15, 35J25, 35J66, 35J70, 35J75}
 \keywords{Nonlinear elliptic problems, Dirichlet condition, degenerate coercivity, distributional solution, singular lower order term, quadratic growth}
 \email{gisella.croce@univ-lehavre.fr}
\thanks{}
\begin{document}
\maketitle

% Enter the first author's name and address:
\centerline{\scshape  Gisella Croce}
\medskip
{\footnotesize
% please put the address of the first author
 \centerline{Laboratoire de Math\'ematiques Appliqu\'ees du Havre  }
   \centerline{Universit\'e du Havre}
   \centerline{25, rue Philippe Lebon}
   \centerline{76063 Le Havre (FRANCE)}
} % Do not forget to end the {\footnotesize by the sign }

\medskip

\bigskip

% The name of the associate editor will be entered by an editorial staff
% "Communicated by the associate editor name" is not needed for special issue.
 %\centerline{(Communicated by the associate editor name)}

%The abstract of your paper
\begin{abstract}
In this paper we study a Dirichlet problem for an elliptic equation with 
degenerate coercivity and a singular lower order term with natural growth with respect to the gradient.
The model problem is
$$
\left\{
\begin{array}{cl}
\displaystyle -\div\left(\frac{\nabla u}{(1+|u|)^p}\right) + \frac{|\nabla u|^{2}}{|u|^{\theta}} = f & \mbox{in $\Omega$,} \\
\hfill u = 0 \hfill & \mbox{on $\partial\Omega$,}
\end{array}
\right.
$$
where $\Omega$ is an open bounded set of $\R^N$, $N\geq 3$ and  $p, \theta>0$.
The source $f$ is a positive function belonging to some  Lebesgue space.
We will show that, even if the lower order term is singular, it has some regularizing effects on the solutions, when $p>\theta-1$ and $\theta<2$.
\end{abstract}

\section{Introduction}

In this paper we study the following problem:
\begin{equation}\label{pb}
\begin{cases}
\displaystyle
-\div\left(\frac{b(x)}{(1+|u|)^p}\nabla u\right) + B\frac{|\nabla u|^{2}}{|u|^{\theta}} = f & \mbox{in $\Omega$,} \\
\hfill u = 0 \hfill & \mbox{on $\partial\Omega$,}
\end{cases}
\end{equation}
where $\Omega$ is an open bounded set of $\R^N$, $N\geq 3$,  $B, p>0$ and  $\theta>0$.
We assume that  $b: \Omega \to \R$ is a measurable function such that for some positive constants
$\alpha$ and $\beta$ 
\begin{equation}\label{hyp_a}
{\alpha}\leq b(x)\leq \beta\,\,\,\,\,\,\,\,\mbox{for  a.e.}\, x \in \Omega\,.
\end{equation}
Moreover $f$ is a positive function belonging to some  Lebesgue space $L^m(\Omega)$, with $m\geq 1$.
 We point out three characteristics of this problem: 
the operator $\displaystyle A(v)=-\div\left(\frac{b(x)}{(1+|v|)^p}\nabla v\right)$ is defined on $H^{1}_0(\Omega)$ 
but is not coercive on this space when $v$ is large,  as proved in \cite{porretta_degenerate}. 
The lower order term has a quadratic growth with respect to the gradient and is singular in the variable $u$. 
As we will see, existence and summability of solutions to problem (\ref{pb}) depend on these features.

It is known that the degenerate coercivity has in some sense a bad effect on the summability of the solutions to
problem
\begin{equation}\label{without}
\begin{cases}
\displaystyle
-\div\left(a(x,u){\nabla u}\right) = f & \mbox{in $\Omega$,} \\
\hfill u = 0 \hfill & \mbox{on $\partial\Omega$,}
\end{cases}
\end{equation}
as proved in \cite{bdo}.
There  $f \in L^m(\Omega)$ was not assumed to be positive, $a:\Omega\times \R\to \R$ was a Carath\'eodory function such that
$\displaystyle \frac{\alpha}{(1+|s|)^p}\leq a(x,s)\leq \beta$, for $p \in (0,1)$ and $\alpha, \beta>0$.
Apart from the case where $\displaystyle m>\frac{N}{2}$, the summability of the solutions is lower than the summability of the solutions
to elliptic coercive problems.
Indeed, in \cite{bdo} it is shown that if $\displaystyle \frac{2N}{N+2-p(N-2)}<m<\frac{N}{2}$ there exists a $H^1_0(\Omega)\cap L^r(\Omega)$ distributional solution,  with $\displaystyle r=\frac{Nm(1-p)}{N-2m}$;
if $\displaystyle \frac{N}{N+1-p(N-1)}<m<\frac{2N}{N+2-p(N-2)}$, there exists a  $W^{1,s}_0(\Omega)$ distributional solution, 
with
$\displaystyle s=\frac{Nm(1-p)}{N-m(1+p)}$. For $p> 1$ the authors prove a non-existence result for constant sources $f$. 
Note that a bad effect on the regularity of the solutions appears even when the right hand side of (\ref{without})
is an element of $H^{-1}(\Omega)$, such as $-\div (F)$, with $F \in L^2(\Omega)$. As a matter of fact, in this case the solutions are in general  not in 
$H^1_0(\Omega)$ (see \cite{GP2001}).

The presence of lower order terms can have a regularizing effect on the solutions.
In \cite{boccardo_nonlinear_convex_analysis} and \cite{croce}
 three kinds of  lower order terms are considered for elliptic problems with degenerate coercivity, with no restriction on $p$.
In the first paper the author analyses a lower order term defined by a Carath\'eodory function 
$g: \Omega \times \R\times \R^N\to \R^N$ with the following properties. There exists $d\in L^1(\Omega)$,  
two positive constants
$\mu_1, \mu_2>0$ and a continuous increasing real function $h$ such that
$g(x,s,\xi)s\geq 0$, $\mu_1 |\xi|^2\leq |g(x,s,\xi)|$  when $|s|\geq \mu_2$ and $|g(x,s,\xi)|\leq d(x)h(|s|)|\xi|^2$.
It is proved  that 
for a   $L^1(\Omega)$ source there exists a $H^1_0(\Omega)$ distributional solution to
$$
\begin{cases}
\displaystyle
-\div\left(a(x,u){\nabla u}\right)+ g(x,u,\nabla u)= f & \mbox{in $\Omega$,} \\
\hfill u = 0 \hfill & \mbox{on $\partial\Omega$\,.}
\end{cases}
$$
This proves that the summability of the gradient of the solutions is much larger than that one of the solutions of problem (\ref{without}).
It is even larger than the summability of the gradient of the solutions to elliptic coercive problems with $L^1(\Omega)$ sources, which is $L^s(\Omega)$ for every $s<\frac{N}{N-1}$ (see \cite{b-g} for example).
We remark moreover that the lower order term gives the existence of a solution for $p\geq 1$;  for these values of $p$, (\ref{without})
has no solution.

In a previous article  \cite{croce} we consider two kinds of lower order terms $h(u)$.
For $h(u)=|u|^{q-1}u$, with $q>p+1$, we stablish the  existence of
a distributional solution $\displaystyle u \in W^{1,t}_0(\Omega)\cap L^q(\Omega)\,,\,\,\,t<\frac{2q}{p +1+q}$,  for any $L^1(\Omega)$ source $f$. 
If $f \in L^m(\Omega), m>1$
and $\displaystyle q \geq  \frac{p +1}{m-1}$
then there exists a distributional solution $u$ in $H^1_0(\Omega) \cap L^{qm}(\Omega)\,.$
If $\displaystyle \frac{p+1}{2m-1}< q <  \frac{p +1}{m-1}$, 
 there exists a distributional solution $u$ in 
$W^{1,\frac{2qm}{p +1+q}}_0(\Omega)$
such that 
$|u|^{qm} \in L^{1}(\Omega)$. 
These results show that if $q$ is sufficiently large, there exists a distributional solution for any source; this is not the case 
for problem (\ref{without}).
The second lower order term analysed in \cite{croce} is $h(u)$, where $h: [0, s_0)\to \R$
is a continuous, increasing function such that $h(0)=0$ and $\lim\limits_{s\to s_0^-} h(s)=+\infty$
for some $s_0>0$.
The regularizing effects of this lower order term are even better than the previous one. Indeed
for a  positive $L^1(\Omega)$ source,  there exists a bounded $H^1_0(\Omega)$
solution.

In the literature we find several papers about  elliptic coercive problems 
with  lower order terms having a quadratic growth with respect to the gradient (see \cite{bensoussan, b-g, bmp, bpm_siam, boccardo_per_puel} 
for example and the references therein), that is, for problem 
$$
\begin{cases}
\displaystyle
-\div(M(x)\nabla u) + g(u)|\nabla u|^2= f & \mbox{in $\Omega$,} \\
\hfill u = 0 \hfill & \mbox{on $\partial\Omega$.}
\end{cases}
$$
In these works it is assumed that $M: \Omega \to \R^{N^2}$ is a bounded elliptic Carath\'eodory map, so that there exists $\alpha>0$ such that 
$\alpha |\xi|^2\leq M(x)\xi\cdot \xi$ for every $\xi \in \R^N$.
Various assumptions are made on $g$. With no attempt of being exhaustive,  
we will describe some recent results
where a singular $g$  has been considered, namely $\displaystyle g(u)=\frac{1}{|u|^\theta}$. 
The case where  $0<\theta\leq 1$, introduced in \cite{arcoya1, arcoya2, arcoya3}, 
has been studied in 
\cite{arcoya1, arcoya2, arcoya3, boccardo_per_puel, bop, giachetti-murat}. 
From this body of literature one can deduce that for a positive source $f\in L^m(\Omega)$, 
if $\displaystyle \frac{2N}{2N-\theta(N-2)}\leq m<\frac N2$  
there exists a strictly positive  solution $\displaystyle u\in H^1_0(\Omega)\cap L^{(2-\theta)m^{**}}(\Omega)$; if $\displaystyle 1<m<\frac{2N}{2N-\theta(N-2)}$
then the solution $u$ belongs to $W^{1,q}_0(\Omega)$, with $\displaystyle q=\frac{Nm(2-\theta)}{N-m\theta}$.
The authors of \cite{luigietaltri} consider the general case $\theta< 2$, assuming 
that $f$ is a strictly positive function on every compactly contained subset of $\Omega$. 
They prove that if $f \in L^{\frac{2N}{N+2}}(\Omega)$ 
there exists a positive $H^1_0(\Omega)$ solution.
Finally, in \cite{giachetti-murat} the lower order term is taken to be  $\displaystyle \lambda u+\mu \frac{|\nabla u|^2}{|u|^\theta}\chi_{\{u>0\}}$, where
$\chi_{\{u>0\}}$ denotes the characteristic function of the set $\{u>0\}$, $\lambda>0$ and $\mu \in \R$.

In this paper we consider the same lower order term as above in an  elliptic problem defined by an operator with degenerate coercivity. We will see that if $0<\theta<2$, then $\displaystyle \frac{|\nabla u|^2}{|u|^\theta}$ has a regularizing effect, even if it is singular in $u$. 
We are going to state our results. We will distinguish the cases $0<\theta<1$ and $1\leq \theta<2$. 

\begin{theorem}\label{teorema_energia_finita}
Let $0<\theta<1$.
Assume that $f$ is a positive function belonging to $L^m(\Omega)$, with $\displaystyle m \geq \frac{2N}{2N-\theta (N - 2)}$.
Then there exists a function $u \in H^1_0(\Omega)$, strictly positive on $\Omega$, such that $\displaystyle \frac{|\nabla u|^2}{u^{\theta}} \in L^1(\Omega)$ and
\begin{equation}\label{formulazionedebole1}
\io \frac{b(x)}{(1+u)^p}{\nabla u}\cdot\nabla \varphi+B \io \frac{|\nabla u|^{2}}{u^{\theta}}\varphi=\io f\varphi\,,
\end{equation}
for every $\varphi \in H^1_0(\Omega)\cap L^{\infty}(\Omega)$. 
\end{theorem}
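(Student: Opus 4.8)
The plan is to regularize the singular term, derive uniform bounds that turn the degenerate coercivity into a genuine $H^1_0$ estimate (this is where $\theta<2$ and the lower bound on $m$ enter), pass to the limit, and finally recover the integrability of the singular term and the full class of admissible test functions.

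\textbf{Approximation.} For $n\in\N$ I would take $u_n\in H^1_0(\Omega)\cap L^\infty(\Omega)$ solving, with $f_n=\min\{f,n\}$,
\[
-\div\!\left(\frac{b(x)}{(1+|u_n|)^{p}}\nabla u_n\right)+B\,\frac{|\nabla u_n|^{2}}{\big(|u_n|+\tfrac1n\big)^{\theta}}=f_n\ \text{ in }\Omega,\qquad u_n=0\ \text{ on }\partial\Omega .
\]
Since the lower order term now has bounded coefficient ($\le Bn^{\theta}$) and $f_n$ is bounded, such a $u_n$ exists by the classical theory for elliptic equations with quadratic growth in the gradient (see e.g.\ \cite{bmp,boccardo_per_puel}; if necessary one first truncates the coefficient $(1+|u_n|)^{-p}$ and removes the truncation via the $L^\infty$ estimate). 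Moreover $u_n\ge0$, and then $u_n>0$ in $\Omega$ by the strong maximum principle, since $f\ge0$ and the lower order term has the favourable sign.

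\textbf{The $H^1_0$ bound.} For $\gamma>0$ I would use $\varphi=(1+u_n)^{\gamma}-1\ge0$ as a test function. Dropping the principal term on $\{u_n\ge1\}$, where $(1+u_n)^{\gamma}-1\ge(1-2^{-\gamma})(1+u_n)^{\gamma}$ and $(u_n+\tfrac1n)^{\theta}\le(1+u_n)^{\theta}$, while on $\{u_n<1\}$ the operator is uniformly elliptic, one gets
\[
\io(1+u_n)^{\gamma-\theta}|\nabla u_n|^{2}\ \le\ C\Big(1+\io f\,(1+u_n)^{\gamma}\Big).
\]
The left-hand side equals a constant times $\io\big|\nabla\big((1+u_n)^{(\gamma-\theta+2)/2}-1\big)\big|^{2}$; by the Sobolev inequality and then H\"older (exponents $m$, $m'=\tfrac m{m-1}$), and choosing $\gamma$ so that $\frac{2N}{N-2}\cdot\frac{\gamma-\theta+2}{2}=\gamma m'$, i.e.\ $\gamma=\frac{2^{*}(2-\theta)}{2m'-2^{*}}$ with $2^*=\frac{2N}{N-2}$ (which is admissible when $m<\tfrac N2$; if $m\ge\tfrac N2$ one simply lowers $m$), the inequality becomes $Y_n^{\gamma-\theta+2}\le C\big(1+Y_n^{\gamma}\big)$ with $Y_n=\|1+u_n\|_{L^{\gamma m'}(\Omega)}$. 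As $\theta<2$ the exponent on the left exceeds the one on the right, so $Y_n\le C'$ uniformly in $n$. Finally $\gamma\ge\theta$ is \emph{equivalent} to $m\ge\frac{2N}{2N-\theta(N-2)}$; hence $(\gamma-\theta+2)/2\ge1$, and the uniform bound on $\io|\nabla(1+u_n)^{(\gamma-\theta+2)/2}|^{2}$ forces $\io|\nabla u_n|^{2}\le C$. (Testing with the truncations $T_k(u_n)=\min\{u_n,k\}$ also shows each $T_k(u_n)$ is bounded in $H^1_0(\Omega)$, and $u_n$ is bounded in $L^{2^*}(\Omega)$.)

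\textbf{Passage to the limit; strict positivity.} Along a subsequence $u_n\rightharpoonup u$ in $H^1_0(\Omega)$, $u_n\to u$ a.e., $u\ge0$. The step I expect to be the main obstacle is a \emph{uniform interior lower bound}: for every open $\omega$ with $\overline\omega\subset\Omega$ there should exist $c_\omega>0$ with $u_n\ge c_\omega$ a.e.\ in $\omega$ for all $n$. I would obtain it by comparison with solutions of suitable non-singular auxiliary problems, using the positivity of $f$; this requires genuine work here because, the singular term being dissipative, the approximating sequence is not monotone (unlike in the analogous problems with a singular \emph{source}), so the bound cannot be read off from monotonicity. Granted it, $\nabla u_n\to\nabla u$ a.e.\ (equivalently $T_k(u_n)\to T_k(u)$ in $H^1_0(\Omega)$ for every $k$) follows by the standard argument for natural-growth problems, exploiting the sign of the lower order term; then for $\varphi\in C^\infty_c(\Omega)$ one passes to the limit in \eqref{formulazionedebole1}: the principal term because $\frac{b(x)}{(1+u_n)^{p}}\nabla\varphi\to\frac{b(x)}{(1+u)^{p}}\nabla\varphi$ strongly in $L^{2}$ while $\nabla u_n\rightharpoonup\nabla u$; the term $\io f_n\varphi\to\io f\varphi$ trivially; and, on an $\omega\supset\mathrm{supp}\,\varphi$, the interior lower bound together with $\nabla u_n\to\nabla u$ in $L^{2}(\omega)$ gives $\io\frac{|\nabla u_n|^{2}}{(u_n+1/n)^{\theta}}\varphi\to\io\frac{|\nabla u|^{2}}{u^{\theta}}\varphi$. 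In particular $u>0$ in $\Omega$.

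\textbf{Integrability of the singular term and general test functions.} Using $\varphi=1-(1+u_n)^{-\delta}\in H^1_0(\Omega)\cap L^\infty(\Omega)$ ($\delta>0$) in the $n$-th equation, discarding the nonnegative principal term and bounding $1-(1+u_n)^{-\delta}\le1$ on the right,
\[
B\io\frac{|\nabla u_n|^{2}}{\big(u_n+\tfrac1n\big)^{\theta}}\big(1-(1+u_n)^{-\delta}\big)\ \le\ \io f_n\ \le\ \|f\|_{L^{1}(\Omega)}
\]
uniformly in $n$ and $\delta$; letting $n\to\infty$ (Fatou, using $u>0$ a.e.) and then $\delta\to0^{+}$ (monotone convergence) yields $B\io\frac{|\nabla u|^{2}}{u^{\theta}}\le\|f\|_{L^{1}(\Omega)}$, so $\frac{|\nabla u|^{2}}{u^{\theta}}\in L^{1}(\Omega)$. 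Since in addition $\frac{b(x)}{(1+u)^{p}}\nabla u\in L^{2}(\Omega)$ and $f\in L^{1}(\Omega)$, a density argument (approximate $\varphi\in H^1_0(\Omega)\cap L^\infty(\Omega)$ by $C^\infty_c(\Omega)$ functions, uniformly bounded and convergent in $H^1_0$, and use dominated convergence in each term) extends \eqref{formulazionedebole1} to all $\varphi\in H^1_0(\Omega)\cap L^\infty(\Omega)$. With $u\in H^1_0(\Omega)$ from the second step and $u>0$ in $\Omega$ from the third, this completes the argument — the interior lower bound being the point that will demand the most care.
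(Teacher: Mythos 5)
Your overall architecture (approximation, $H^1_0$ bound extracted from the lower order term, passage to the limit, recovery of the singular term by Fatou) matches the paper's, and your a priori estimate is essentially the paper's Lemma on the $H^1_0$ bound with the exponent $\gamma$ left general instead of fixed at $\theta$; that part is sound. The genuine gap is exactly where you flag it: the strict positivity of $u$ (and the uniform interior lower bound you want for $u_n$). You say you would obtain it ``by comparison with suitable non-singular auxiliary problems,'' but you do not supply the device that makes such a comparison possible in the presence of the quadratic gradient term, and that device is the mathematical heart of the theorem in the range $0<\theta<1$. The paper's argument is: set $H_n(s)=\int_0^s \frac{t(1+T_n(t))^p}{\alpha(t+\frac1n)^{\theta+1}}\,dt$ and test with $e^{-BH_n(u_n)}\phi$, $\phi\ge0$ smooth. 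The derivative of the exponential produces a term which, thanks to $b\ge\alpha$, dominates the singular quadratic term, leaving the distributional inequality $-\div\bigl(b(x)\nabla P_n(u_n)\bigr)\ge T_1(f)\,e^{-BH_n(u_n)}$ with $P_n(s)=\int_0^s e^{-BH_n(t)}(1+T_n(t))^{-p}\,dt$; comparison with the linear problem $-\div(b\nabla z_n)=T_1(f)e^{-BH_n(u_n)}$ and the strong maximum principle then give $P(u)\ge z>0$, hence $u>0$. The hypothesis $\theta<1$ enters precisely here: it is what makes $H(s)=\int_0^s\frac{(1+t)^p}{\alpha t^{\theta}}\,dt$ finite near $s=0$, so that the limit functions $H$ and $P$ are well defined and $P$ is strictly increasing from $P(0)=0$. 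Without this (or an equivalent substitute), $u>0$ is not established, the expression $|\nabla u|^2/u^{\theta}$ has no meaning on a possible zero set, and your passage to the limit in the singular term collapses.

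A secondary issue: even granting positivity, your limit in the lower order term rests on $\nabla u_n\to\nabla u$ strongly in $L^2_{loc}$, which you call ``the standard argument.'' In the paper this strong local convergence of the truncates is itself a nontrivial lemma, proved only in the case $1\le\theta<2$ and only after the uniform interior lower bound is available, because one needs $u_n(u_n+\frac1n)^{-\theta-1}$ to be locally bounded before running the $\varphi_\lambda(s)=se^{\lambda s^2}$ trick. For $\theta<1$ the paper avoids strong convergence altogether: it proves the inequality ``$\le$'' by Fatou and the reverse inequality by testing with $e^{-H_{1/n}(u_n)}e^{H_{1/j}(T_j(u))}\varphi$ and letting $n\to\infty$ and then $j\to\infty$. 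So your route is in principle viable (it is the one the paper follows for $1\le\theta<2$, under a stronger hypothesis on $f$), but both of its pillars --- the interior lower bound and the strong local gradient convergence --- are left unproved in your proposal.
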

In the case where $\displaystyle m < \frac{2N}{2N-\theta (N - 2)}=\left(\frac{2^*}{\theta}\right)'$, we are able to prove the existence of an infinite energy solution, belonging to 
$W^{1, \sigma}_0(\Omega)$, with $\displaystyle \sigma=\frac{mN(2-\theta)}{N-\theta m}$ (smaller than 2).
\begin{theorem}\label{teorema_energia_infinita}
Let $0<\theta<1$.
Assume that $f$ is a positive function belonging to $L^m(\Omega)$, with $\displaystyle \frac{N}{2N-\theta (N - 1)}< m < \frac{2N}{2N-\theta (N - 2)}$.
Then there exists a function $u \in W^{1, \sigma}_0(\Omega)$,  strictly positive on $\Omega$, 
such that $\displaystyle \frac{|\nabla u|^2}{u^{\theta}} \in L^1(\Omega)$ and
\begin{equation}\label{formulazionedebole}
\io \frac{b(x)}{(1+u)^p}{\nabla u}\cdot\nabla \varphi+B\io \frac{|\nabla u|^{2}}{u^{\theta}}\varphi=\io f\varphi\,,
\end{equation}
for every $\varphi \in C^1_0(\Omega)$. 
\end{theorem}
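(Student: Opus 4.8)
The plan is to run the classical three-step scheme for singular problems with natural growth: approximation, uniform a priori estimates, passage to the limit. The only genuinely new ingredient is the choice of test function in the estimates, tuned so that the quadratic lower order term compensates the degenerate coercivity; this is why the exponent $\sigma$ does not depend on $p$. First I would introduce, for $n\in\N$, the approximate problems
\[
\begin{cases}
\displaystyle -\div\left(\frac{b(x)}{(1+|u_n|)^p}\nabla u_n\right)+B\,\frac{|\nabla u_n|^{2}}{\left(|u_n|+\frac1n\right)^{\theta}}=f_n & \mbox{in $\Omega$,} \\
u_n = 0 & \mbox{on $\partial\Omega$,}
\end{cases}
\]
with $f_n=\min\{f,n\}\in L^{\infty}(\Omega)$. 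Since the datum is bounded and the lower order term has quadratic growth, a standard fixed point/truncation argument, resting on the $L^{\infty}$ a priori bounds available for equations with natural growth and bounded data (cf.\ \cite{boccardo_per_puel}), produces a solution $u_n\in H^1_0(\Omega)\cap L^{\infty}(\Omega)$; the boundedness of $u_n$ is what makes the test functions used below admissible. The maximum principle gives $u_n\ge0$; since $n\mapsto f_n$ is nondecreasing, a comparison argument gives that $n\mapsto u_n$ is nondecreasing; and, as in the coercive case (see e.g.\ \cite{luigietaltri}), the positivity of $f$ forces the limit $u:=\lim_n u_n$ to be strictly positive in $\Omega$, with $u_n\ge u_1$ and $u_1$ locally bounded away from zero.

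As a first estimate I would test the $n$-th equation with $\delta^{-1}T_{\delta}(u_n)$, where $T_{\delta}(s)=\min\{s,\delta\}$; discarding the nonnegative principal contribution and letting $\delta\to0^{+}$ (monotone convergence on the left, dominated convergence on the right) gives the uniform bound
\[
B\io\frac{|\nabla u_n|^{2}}{\left(u_n+\frac1n\right)^{\theta}}\le\io f_n\le\|f\|_{L^1(\Omega)},
\]
whence also $\int_{\{u_n<1\}}|\nabla u_n|^{2}\le 2^{\theta}B^{-1}\|f\|_{L^1(\Omega)}$.

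The core of the proof is the a priori bound in $W^{1,\sigma}_0(\Omega)$. Set $\gamma=\dfrac{(2-\theta)m'}{2m'-2^{*}}$; in the range $\frac{N}{2N-\theta(N-1)}<m<\frac{2N}{2N-\theta(N-2)}$ one checks that $\sigma\in(1,2)$, $\gamma\in(0,1)$ and $2\gamma-2+\theta=\frac{(2-\theta)\,2^{*}}{2m'-2^{*}}>0$. Now test the $n$-th equation with $\varphi_n=(1+u_n)^{\,2\gamma-2+\theta}-1$, which is nonnegative, belongs to $H^1_0(\Omega)\cap L^{\infty}(\Omega)$, and reduces for $\gamma=1$ to the natural choice $(1+u_n)^{\theta}-1$. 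The principal contribution is then nonnegative and is discarded; on $\{u_n\ge1\}$ one has $(1+u_n)^{\,2\gamma-2+\theta}-1\ge c_0(1+u_n)^{\,2\gamma-2+\theta}$ and $(u_n+\frac1n)^{\theta}\le(1+u_n)^{\theta}$, so the lower order term dominates $Bc_0\int_{\{u_n\ge1\}}(1+u_n)^{2\gamma-2}|\nabla u_n|^{2}$; adding the integral over $\{u_n<1\}$ (already bounded by the previous step) and using $f_n\le f$ on the right-hand side one obtains
\[
\io|\nabla(1+u_n)^{\gamma}|^{2}\le C+C\,\|f\|_{L^m(\Omega)}\left(\io(1+u_n)^{(2\gamma-2+\theta)m'}\right)^{1/m'}.
\]
By the Sobolev inequality the left-hand side controls $\big(\io(1+u_n)^{\gamma 2^{*}}\big)^{2/2^{*}}$, and $\gamma$ has been chosen precisely so that $(2\gamma-2+\theta)m'=\gamma 2^{*}$; since moreover $\frac1{m'}<\frac2{2^{*}}$ (equivalently $m<\frac N2$, which follows from the upper bound on $m$), this self-improving inequality yields a uniform bound on $\io(1+u_n)^{\gamma 2^{*}}$, hence on $\io|\nabla(1+u_n)^{\gamma}|^{2}$. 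Finally, writing $|\nabla u_n|^{\sigma}=(1+u_n)^{(1-\gamma)\sigma}\big[(1+u_n)^{\gamma-1}|\nabla u_n|\big]^{\sigma}$ and applying H\"older with exponents $\frac{2}{2-\sigma}$ and $\frac2{\sigma}$, the value $\sigma=\frac{Nm(2-\theta)}{N-\theta m}$ makes $(1-\gamma)\sigma\cdot\frac{2}{2-\sigma}=\gamma 2^{*}$, so that both factors on the right are controlled and $\{u_n\}$ is bounded in $W^{1,\sigma}_0(\Omega)$, as well as in $L^{\gamma 2^{*}}(\Omega)$.

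It then remains to pass to the limit. By monotonicity $u_n\uparrow u$ a.e.\ with $u>0$, and, up to a subsequence, $u_n\rightharpoonup u$ in $W^{1,\sigma}_0(\Omega)$ and $u_n\to u$ in $L^{t}(\Omega)$ for every $t<\sigma^{*}$. The principal term passes to the limit by pairing the weak $L^{\sigma}(\Omega)$ convergence of $\nabla u_n$ with the strong $L^{\sigma'}(\Omega)$ convergence---by dominated convergence, since $\varphi\in C^1_0(\Omega)$---of $b(x)(1+u_n)^{-p}\nabla\varphi\to b(x)(1+u)^{-p}\nabla\varphi$. For the singular term one needs, as usual for natural-growth problems, (i) the almost everywhere convergence $\nabla u_n\to\nabla u$, obtained by testing the difference of the equations with truncations of $u_n-u$ and exploiting the sign of the lower order term (this in fact yields strong convergence of $u_n$ in $W^{1,q}_0(\Omega)$ for every $q<\sigma$), and (ii) the equi-integrability of the family $\{|\nabla u_n|^{2}(u_n+\frac1n)^{-\theta}\}$, obtained by showing that $\int_{\{u_n>k\}}|\nabla u_n|^{2}(u_n+\frac1n)^{-\theta}\to0$ as $k\to\infty$ uniformly in $n$ and using that $u_n$ is locally bounded away from zero. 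Granting (i)--(ii), Vitali's theorem allows one to pass to the limit in $B\io|\nabla u_n|^{2}(u_n+\frac1n)^{-\theta}\varphi$ for $\varphi\in C^1_0(\Omega)$, which gives (\ref{formulazionedebole}), while Fatou's lemma yields $|\nabla u|^{2}u^{-\theta}\in L^1(\Omega)$. I expect item (ii)---and, in the present degenerate setting, the a.e.\ convergence in (i) as well---to be the main obstacle: close to $\partial\Omega$ the weight $(u_n+\frac1n)^{-\theta}$ is large while only a sub-$H^1$ bound on $\nabla u_n$ is available, and this is exactly the point where the hypotheses $\theta<1$ and $m>\frac{N}{2N-\theta(N-1)}$ are used.
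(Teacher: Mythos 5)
Your approximation-plus-a-priori-estimates skeleton is the right one, and your $W^{1,\sigma}_0(\Omega)$ estimate is essentially identical to the paper's (your exponent $\gamma=\frac{(2-\theta)m'}{2m'-2^*}$ is the paper's $\gamma+1$, and your test function $(1+u_n)^{2\gamma-2+\theta}-1$ is literally the one used there). But the proof has a genuine gap exactly where the theorem is hardest: the strict positivity of $u$ and the control of the singular term near $\{u_n\approx 0\}$. You obtain both from the claim that ``since $f_n$ is nondecreasing, a comparison argument gives that $u_n$ is nondecreasing, hence $u\ge u_1>0$ locally.'' No such comparison is available here: the equations for $u_n$ and $u_{n+1}$ differ not only in the datum but in the operator (through the truncation $T_n$ in the denominator) and in the regularization of the singular quadratic term, and comparison principles for quasilinear equations with quadratic gradient dependence require structure you have not verified. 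Even the statement that $u_1$ is locally bounded away from zero needs a strong maximum principle for an equation whose lower order term pushes the solution \emph{down}. The paper's actual argument (Proposition \ref{funzionepositiva}) is an exponential change of unknown $P_n(u_n)$ with $H(s)=\int_0^s (1+t)^p/(\alpha t^\theta)\,dt$ --- finite precisely because $\theta<1$ --- which turns $u_n$ into a supersolution of a \emph{linear} problem, to which the $H^1_0$ comparison principle and the strong maximum principle do apply; this yields $u>0$ but, importantly, no uniform lower bound $u_n\ge c_\omega>0$. A secondary but real defect of your setup: your approximate lower order term $B|\nabla u_n|^2(|u_n|+\tfrac1n)^{-\theta}$ does not satisfy the sign condition $g(s)s\ge 0$, so neither the existence of bounded $H^1_0$ solutions $u_n$ (Boccardo--Murat--Puel) nor your ``maximum principle gives $u_n\ge0$'' step goes through as written; the paper uses $B\,u_n|\nabla u_n|^2(|u_n|+\tfrac1n)^{-\theta-1}$ for exactly this reason.

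This gap propagates into your passage to the limit. Your Vitali/equi-integrability argument for $B\io |\nabla u_n|^2(u_n+\tfrac1n)^{-\theta}\varphi$ explicitly uses that ``$u_n$ is locally bounded away from zero''; that uniform local lower bound is proved in the paper only for $1\le\theta<2$ (Proposition \ref{proposition_ko}, via the Keller--Osserman machinery), and is \emph{not} established for $0<\theta<1$, so you cannot invoke it. For $0<\theta<1$ the paper takes a different route that avoids any uniform lower bound on $u_n$: Fatou's lemma gives the inequality $\le$ in (\ref{formulazionedebole}), and the reverse inequality is obtained by testing with $e^{-H_{1/n}(u_n)}e^{H_{1/j}(T_j(u))}R_k(u_n)\varphi$, exploiting the sign of the resulting extra terms, and passing to the limit in $n$, then $j$, then $k$ using $e^{-H_0(u)}e^{H_{1/j}(T_j(u))}\le 1$ and the fact that $|\nabla u|^2u^{-\theta}\in L^1(\Omega)$. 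To repair your proof you would need either to supply this two-sided-inequality argument or to prove a uniform local positive lower bound for the $u_n$ in the range $0<\theta<1$; as it stands, the positivity of $u$ and the identification of the limit of the singular term are both asserted rather than proved.
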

In the case where $1\leq \theta<2$,
we are able to prove the same results as in the case $0<\theta<1$,
under a  stronger hypothesis on $f$.
\begin{theorem}\label{teorema_energia_finita2}
Let $1\leq \theta<2$ and $p>\theta-1$.
Assume that $f\in L^m(\Omega)$, with $\displaystyle m \geq \frac{2N}{2N-\theta (N - 2)}$, and satisfies
$$
\mbox{ess inf}\,\{f(x): x \in \omega\}>0 
$$ 
for every $\omega\subset \subset \Omega$.
Then there exists a function $u \in H^1_0(\Omega)$, strictly positive on $\Omega$,
 such that
 $\displaystyle \frac{|\nabla u|^2}{u^{\theta}} \in L^1(\Omega)$ and
$$
\io \frac{b(x)}{(1+u)^p}{\nabla u}\cdot\nabla \varphi+B \io \frac{|\nabla u|^{2}}{u^{\theta}}\varphi=\io f\varphi
$$
for every $\varphi \in H^1_0(\Omega)\cap L^{\infty}(\Omega)$. 
\end{theorem}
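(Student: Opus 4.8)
\medskip
\noindent\textbf{Plan of proof.}
The strategy is to recover $u$ as a limit of solutions of approximating problems in which both the singularity of the lower order term and the degeneracy of the principal operator have been removed. For $n\in\N$ I would set $f_n=T_n(f)$, so that $0\le f_n\le f$, $f_n\in L^\infty(\Omega)$ and $f_n\to f$ in $L^m(\Omega)$, and consider
$$
\left\{
\begin{array}{cl}
\displaystyle -\div\!\left(\frac{b(x)}{(1+|T_n(u_n)|)^p}\,\nabla u_n\right)+B\,g_n(u_n,\nabla u_n)=f_n & \mbox{in }\Omega,\\[2mm]
\hfill u_n=0\hfill & \mbox{on }\partial\Omega,
\end{array}
\right.
$$
where $g_n$ is a Carath\'eodory regularization of $|\xi|^2/|s|^\theta$, satisfying the sign condition $g_n(x,s,\xi)\,s\ge0$ and $|g_n(x,s,\xi)|\le n^\theta|\xi|^2$, and converging to $|\xi|^2/s^\theta$ on $\{s>0\}$. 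For fixed $n$ the principal part is uniformly elliptic (its coefficient lies between $\alpha(1+n)^{-p}$ and $\beta$) and $f_n\in L^\infty(\Omega)$, so the classical existence theory for coercive elliptic equations with a lower order term of natural growth in the gradient yields a solution $u_n\in H^1_0(\Omega)\cap L^\infty(\Omega)$; testing with $u_n^-$ and using the sign condition and $f_n\ge0$ gives $u_n\ge0$.

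\medskip
\noindent\textbf{Uniform energy bound.}
Here I would use $\varphi=(1+u_n)^\theta-1\in H^1_0(\Omega)\cap L^\infty(\Omega)$ as a test function. The principal part yields $\alpha\theta\io(1+u_n)^{\theta-1-p}|\nabla u_n|^2$, and the lower order term a nonnegative quantity comparable to $\io(1+u_n)^{-\theta}\big[(1+u_n)^\theta-1\big]|\nabla u_n|^2$; since $(1+u_n)^{-\theta}\big[(1+u_n)^\theta-1\big]\ge1-2^{-\theta}$ on $\{u_n\ge1\}$ and $(1+u_n)^{\theta-1-p}$ is bounded below on $\{u_n\le1\}$, the sum of the two controls $c\io|\nabla u_n|^2$. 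On the right-hand side, $\io f_n\big[(1+u_n)^\theta-1\big]\le\|f\|_{L^m(\Omega)}\,\|1+u_n\|_{L^{\theta m'}(\Omega)}^\theta$, and the hypothesis $m\ge\frac{2N}{2N-\theta(N-2)}=\left(\frac{2^*}{\theta}\right)'$ is exactly $\theta m'\le2^*$, so by the Sobolev inequality this is $\le C\big(1+\|\nabla u_n\|_{L^2(\Omega)}^\theta\big)$; since $\theta<2$, Young's inequality absorbs it and gives a bound for $\|u_n\|_{H^1_0(\Omega)}$ uniform in $n$ — note that the finiteness of the energy is produced here by the regularizing lower order term, not by the degenerate principal part. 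Thus, up to a subsequence, $u_n\rightharpoonup u$ in $H^1_0(\Omega)$, $u_n\to u$ a.e.\ and in $L^q(\Omega)$ for every $q<2^*$, with $u\ge0$.

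\medskip
\noindent\textbf{Strict positivity and the singular term.}
This is where the assumption that $f$ has a positive essential infimum on every $\omega\subset\subset\Omega$, together with $p>\theta-1$, comes in: I would prove that for every $\omega\subset\subset\Omega$ there is $c_\omega>0$, independent of $n$, with $u_n\ge c_\omega$ a.e.\ in $\omega$. On the region where $u_n$ is small the principal part is uniformly elliptic, so one can compare $u_n$ from below with the solution of a linear non-degenerate Dirichlet problem with datum $c_0\chi_{\omega'}$, where $\omega\subset\subset\omega'\subset\subset\Omega$ and $c_0>0$ is a lower bound for $f$ on $\omega'$, the point being that the positive singular term cannot force $u_n$ down to $0$ where $f$ stays bounded below; the restriction $p>\theta-1$ is what makes the corresponding estimates close. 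Granting this, $g_n(u_n,\nabla u_n)\le c_\omega^{-\theta}|\nabla u_n|^2$ in $\omega$, so $\{g_n(u_n,\nabla u_n)\}$ is bounded in $L^1_{\mathrm{loc}}(\Omega)$; near $\partial\Omega$ I would control $\io_{\{u_n<\delta\}}g_n(u_n,\nabla u_n)$ by testing with functions of the form $1-(1+u_n)^{-\gamma}\in H^1_0(\Omega)\cap L^\infty(\Omega)$ and using the energy bound, which forces $|\nabla u_n|$ to be small where $u_n$ is small; combining the two one obtains equi-integrability of $\{g_n(u_n,\nabla u_n)\}$ on all of $\Omega$. Passing to the limit in the inequalities $u_n\ge c_\omega$ shows that $u$ is strictly positive on $\Omega$.

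\medskip
\noindent\textbf{Passage to the limit.}
I would first upgrade the convergence to $\nabla u_n\to\nabla u$ a.e.\ in $\Omega$ by the usual device for problems with quadratic growth in the gradient — test the difference of the equations for $u_n$ and for a truncation of $u$ with $\psi_\lambda(u_n-T_k(u))$, $\psi_\lambda$ an exponential-type weight chosen so that the natural-growth term is dominated, use the local lower bound to handle the singular coefficient on compact subsets, and deduce strong local convergence of the gradients. Then one passes to the limit in the weak formulation for fixed $\varphi\in H^1_0(\Omega)\cap L^\infty(\Omega)$: in the principal term $\frac{b(x)}{(1+|T_n(u_n)|)^p}\nabla u_n\rightharpoonup\frac{b(x)}{(1+u)^p}\nabla u$ in $L^2(\Omega)$, since the coefficient is bounded and converges a.e.\ and $\nabla u_n\rightharpoonup\nabla u$; in the lower order term $g_n(u_n,\nabla u_n)\,\varphi\to\frac{|\nabla u|^2}{u^\theta}\varphi$ a.e., and by the equi-integrability just obtained this convergence also holds in $L^1(\Omega)$ (Vitali), while Fatou's lemma applied with a sign-definite test function gives $\frac{|\nabla u|^2}{u^\theta}\in L^1(\Omega)$. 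This yields $u\in H^1_0(\Omega)$, strictly positive on $\Omega$, with $\frac{|\nabla u|^2}{u^\theta}\in L^1(\Omega)$, satisfying the required identity for every $\varphi\in H^1_0(\Omega)\cap L^\infty(\Omega)$.

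\medskip
The step I expect to be the main obstacle is the third one: proving the uniform-in-$n$ strict positivity on compact subsets and the control of $g_n(u_n,\nabla u_n)$ up to $\partial\Omega$ when $1\le\theta<2$. In that range the singularity $u^{-\theta}$ is genuine — so that $\frac{|\nabla u|^2}{u^\theta}$ can be integrable only because the gradient of $u$ decays fast near the boundary — and it is precisely here that the extra hypotheses of this theorem, the positive essential infimum of $f$ on compact subsets and the condition $p>\theta-1$, are needed, in contrast with the case $0<\theta<1$ of Theorem \ref{teorema_energia_finita}.
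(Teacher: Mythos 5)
Your overall architecture (approximation by problems with a regularized lower order term and truncated datum, the uniform $H^1_0$ bound via the test function $(1+u_n)^\theta-1$ using $\theta<2$ and $m\ge(2^*/\theta)'$, a.e.\ convergence of the gradients via exponential-type test functions, and equi-integrability plus Vitali for the singular term) coincides with the paper's. The genuine gap is in the step you yourself single out as the main obstacle: the uniform local lower bound $u_n\ge c_\omega>0$ on every $\omega\subset\subset\Omega$. Your proposed mechanism --- comparing $u_n$ from below with the solution of a linear nondegenerate problem with datum $c_0\chi_{\omega'}$, ``the point being that the positive singular term cannot force $u_n$ down to $0$'' --- goes the wrong way, because the singular term is an absorption term: from
$$
-\div\Bigl(\tfrac{b(x)}{(1+T_n(u_n))^p}\nabla u_n\Bigr)=T_n(f)-B\,\tfrac{u_n|\nabla u_n|^2}{(u_n+\frac1n)^{\theta+1}}\le T_n(f)\,,
$$
$u_n$ is a \emph{sub}-solution of the linear problem, hence bounded \emph{above} by its solution; to bound $u_n$ from below you would need a pointwise upper control of the singular term, which is precisely what is not available a priori (only the $L^1$ bound (\ref{primastima}) holds). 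For $0<\theta<1$ the paper removes the singular term by the exponential change of unknown $e^{-BH(u_n)}$ with $H(s)=\int_0^s(1+t)^p/(\alpha t^\theta)\,dt$; for $\theta\ge1$ this integral diverges at $0$ and that device is unavailable. The paper's actual argument (Proposition \ref{proposition_ko}) is of a different nature: a decreasing change of variable $v_n=\psi_n(u_n)$ turns $u_n$ into a sub-solution of a semilinear inequality $-\div(b(x)\nabla v_n)+T_1(f)\,a_0\,l(v_n)\le0$ in which $l$ satisfies the Keller--Osserman condition (this is exactly where $1\le\theta<2$ and $\mbox{ess inf}_\omega f>0$ enter), and the universal local upper bound for such sub-solutions (Theorem \ref{K-O}) gives $v_n\le C_\omega$, i.e.\ $u_n\ge\psi_1^{-1}(C_\omega)>0$ uniformly in $n$. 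Without this (or an equivalent) ingredient your proof does not close.

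Two smaller points. First, the hypothesis $p>\theta-1$ plays no role in the positivity step (Proposition \ref{proposition_ko} holds for every $p>0$), so attributing the closing of those estimates to it is a misreading; it only governs which a priori estimate is the relevant one. Second, your proposed boundary control via test functions of the form $1-(1+u_n)^{-\gamma}$ would not help: such a function vanishes precisely where the term is singular, so it carries no information there. The global $L^1$ bound on the lower order term comes from testing with $T_h(u_n)/h$ and letting $h\to0$ (estimate (\ref{primastima})), and the equi-integrability ``at infinity'' from testing with $T_1(u_n-T_{k-1}(u_n))$; combined with the local lower bound and the strong $H^1_{loc}$ convergence of $T_k(u_n)$, this is what makes Vitali's theorem applicable.
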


\begin{theorem}\label{teorema_energia_infinita2}
Let $1\leq \theta<2$ and $p>\theta-1$.
Assume that $f\in L^m(\Omega)$, with $\displaystyle \frac{N}{2N-\theta (N - 1)}< m < \frac{2N}{2N-\theta (N - 2)}$, and satisfies
$$
\mbox{ess inf}\,\{f(x): x \in \omega\}>0 
$$ 
for every $\omega\subset \subset \Omega$.
Then there exists a function $u \in W^{1, \sigma}_0(\Omega)$, strictly positive on $\Omega$, 
such that $\displaystyle \frac{|\nabla u|^2}{u^{\theta}} \in L^1(\Omega)$ and
$$
\io \frac{b(x)}{(1+u)^p}{\nabla u}\cdot\nabla \varphi+B\io \frac{|\nabla u|^{2}}{u^{\theta}}\varphi=\io f\varphi
$$
for every $\varphi \in C^1_0(\Omega)$. 
\end{theorem}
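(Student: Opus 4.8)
The plan is to proceed by approximation, uniform a priori bounds, and passage to the limit; the specific feature to handle is the interplay between the degeneracy in $u$ of the principal part and the singularity in $u$ of the zero order term. For $n\in\N$ set $f_n=\min\{f,n\}$ and consider the approximate problems
$$
-\div\left(\frac{b(x)}{(1+|u_n|)^{p}}\nabla u_n\right)+B\,\frac{|\nabla u_n|^{2}}{(|u_n|+\frac1n)^{\theta}}=f_n\quad\text{in }\Omega,\qquad u_n\in H^1_0(\Omega).
$$
The principal coefficient is bounded by $\beta$ and, once $u_n\in L^\infty(\Omega)$, bounded below by a positive constant, while the lower order term has natural (quadratic) growth in $\nabla u_n$ with bounded coefficient $B(|u_n|+\frac1n)^{-\theta}\le Bn^{\theta}$; hence, by a standard double approximation (truncating also the principal coefficient) together with the classical existence theory for elliptic equations with natural growth in the gradient, there is a solution $u_n\in H^1_0(\Omega)\cap L^\infty(\Omega)$. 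Testing with $-u_n^{-}$ and using that the lower order term has the sign of $u_n$ forces $u_n\ge 0$; since $f_n>0$ a.e.\ and $\nabla u_n=0$ a.e.\ on $\{u_n=0\}$, the equation then gives $|\{u_n=0\}|=0$, so $u_n>0$ a.e.

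Testing with $T_\varepsilon(u_n)/\varepsilon$ (the truncation at level $\varepsilon$) and letting $\varepsilon\to0^{+}$, the principal part is nonnegative and one obtains the basic estimate $B\io\frac{|\nabla u_n|^{2}}{(u_n+\frac1n)^{\theta}}\le\io f_n$. Combining this with the estimate produced by testing the equation with a power $(u_n+\frac1n)^{2\delta-1}-(\frac1n)^{2\delta-1}$ (equivalently a power of $1+u_n$), optimized over $\delta$, and using Sobolev's inequality applied to a suitable power of $1+u_n$ together with Hölder's inequality and $f\in L^m(\Omega)$, one reaches
$$
\io(1+u_n)^{\frac{\mu\sigma}{2-\sigma}}\le C,\qquad\io\frac{|\nabla u_n|^{2}}{(1+u_n)^{\mu}}\le C
$$
for an appropriate $\mu$, whence by Hölder's interpolation $\|u_n\|_{W^{1,\sigma}_0(\Omega)}\le C$ with $\sigma=\frac{mN(2-\theta)}{N-\theta m}$; here $m<\frac{2N}{2N-\theta(N-2)}$ is exactly $\sigma<2$, and $m>\frac{N}{2N-\theta(N-1)}$ is exactly $\sigma>1$, so that $u_n$ lies in a Sobolev space and is bounded in $L^{\sigma^{**}}(\Omega)$. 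It is at this point that $p>\theta-1$ is used: it is precisely the condition under which the range of admissible test-function exponents $\delta$ is nonempty, i.e.\ it is what lets the singularity $u^{-\theta}$ of the lower order term be dominated by the degeneracy $(1+u)^{-p}$ of the principal part (for $0<\theta<1$ this is automatic).

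Up to a subsequence $u_n\rightharpoonup u$ in $W^{1,\sigma}_0(\Omega)$, $u_n\to u$ a.e., $u\ge0$. Two further ingredients give the limit equation. First, a uniform local lower bound: for every $\omega\subset\subset\Omega$ there is $c_\omega>0$ with $u_n\ge c_\omega$ a.e.\ on $\omega$ for all large $n$. Here both the local positivity hypothesis on $f$ and $p>\theta-1$ are used: via the change of variable $s\mapsto\int_0^{s}(1+t)^{-p}\,dt$, which renders the principal part uniformly elliptic, and the good sign of the singular gradient term, one constructs a local subsolution of the approximate problems on a slightly larger subset and applies a comparison principle, the conditions $\theta<2$ and $p>\theta-1$ being exactly what keep the integrals appearing in that construction convergent. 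Consequently $(u_n+\frac1n)^{-\theta}\le c_\omega^{-\theta}$ on $\omega$ and $\int_\omega\frac{|\nabla u_n|^{2}}{u_n^{\theta}}\le C_\omega$ uniformly in $n$. Second, $\nabla u_n\to\nabla u$ a.e.: this is the standard, technically delicate, compactness argument for equations with natural growth in the gradient---localized energy estimates with exponential test functions yield strong $H^1_{\mathrm{loc}}$ convergence of the truncations $T_k(u_n)$, hence a.e.\ convergence of $\nabla u_n$.

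With these, Fatou's lemma gives $\frac{|\nabla u|^{2}}{u^{\theta}}\in L^1(\Omega)$ and, through the local lower bounds, $u>0$ on $\Omega$; a Vitali equi-integrability argument on $\mathrm{supp}\,\varphi$ then yields $B\io\frac{|\nabla u_n|^{2}}{(u_n+\frac1n)^{\theta}}\varphi\to B\io\frac{|\nabla u|^{2}}{u^{\theta}}\varphi$ for every $\varphi\in C^1_0(\Omega)$, while the principal part passes to the limit by the weak $L^{\sigma}$ convergence of $\nabla u_n$ and the a.e.\ convergence of $b(1+u_n)^{-p}\nabla\varphi$; this is the weak formulation in the statement. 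I expect the main obstacle to be precisely the uniform local lower bound together with the strong convergence of the gradients needed to identify the singular term in the limit: the remaining steps are by now routine for degenerate-coercivity problems, but having simultaneously a degeneracy and a singularity in $u$ is what makes the argument go through only under $p>\theta-1$ when $\theta\ge1$.
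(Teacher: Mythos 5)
Your architecture coincides with the paper's: approximation, the basic estimate obtained from $T_h(u_n)/h$, a $W^{1,\sigma}_0(\Omega)$ bound from power-type test functions combined with Sobolev and H\"older, a.e.\ convergence of the gradients, a uniform local lower bound $u_n\geq c_\omega>0$, strong $H^1_{\mathrm{loc}}$ convergence of the truncations, and an equi-integrability/Vitali argument to pass to the limit in the singular term. Two points, however, deserve scrutiny.

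The crux of the case $1\leq\theta<2$ --- the uniform local lower bound --- is precisely the step you dispatch in one sentence, and the mechanism you propose (construct a local subsolution of the approximate problems and ``apply a comparison principle'') is not justified; it is exactly where the difficulty lies. For $0<\theta<1$ one can absorb the quadratic term by the change of unknown $e^{-BH(u_n)}$ with $H(s)=\int_0^s (1+t)^p/(\alpha t^{\theta})\,dt$ and then compare with the solution of a \emph{linear} problem; when $\theta\geq 1$ this primitive diverges at the origin and that route collapses. The paper instead sets $v_n=\psi_n(u_n)$, with $\psi_n$ built from both the degeneracy $a_n$ and the singular weight through $k_n$, shows that $v_n$ satisfies $-\div(b(x)\nabla v_n)+T_1(f)\,a_0\,l(v_n)\leq 0$ with $l$ increasing, $l(s)/s$ eventually increasing, and $\int^{+\infty}dt\big/\sqrt{\textstyle\int_0^t l}<\infty$, and then invokes the local \emph{upper} bound for subsolutions of such semilinear inequalities (the Keller--Osserman result of Leoni--Pellacci); the hypotheses actually consumed there are $\theta<2$ (integrability of $\sqrt{k_1'}$ near $0$) and $\theta\geq 1$ (so that $k_1(w)\to-\infty$ as $w\to 0^+$), together with $\mbox{ess inf}_\omega f>0$. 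No off-the-shelf comparison principle is available for the quasilinear operator with quadratic gradient growth and a singular coefficient, so as written this step is a genuine gap in your proposal.

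A secondary misattribution: you claim $p>\theta-1$ is what makes the admissible exponents in the $W^{1,\sigma}$ estimate nonempty. In the paper that estimate holds for \emph{every} $p>0$: the operator term is dropped by sign and the bound comes entirely from the lower order term, the exponent $\gamma=\frac{2^*-\theta m'}{2m'-2^*}$ depending only on $\theta$, $m$, $N$. Nor does $p>\theta-1$ enter the positivity proposition. Its role is to delineate the regime in which these lower-order-term estimates improve on those of the problem without the gradient term (the complementary range $0<p\leq\theta-1$ being treated separately). This does not break your argument, but the hypothesis is not used where you say it is.
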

We remark that if $\displaystyle \theta<\frac{N}{N-1}$ we are able
to prove the existence of solutions when the source $f$ belongs to $L^1(\Omega)$.

We would like to point out the regularizing effects of the lower order term, 
in the case where $p>\theta-1$ and $0<\theta<2$. 
Our results furnish  $H^1_0(\Omega)$ solutions for less summable sources than for 
problem (\ref{without}), since $\displaystyle \frac{2N}{2N-\theta N+2\theta}<\frac{2N}{N(1-p)+2(p + 1)}$.
Even in the case where the source $f$ is less summable, 
we get a better regularity of solutions than for problem (\ref{without}): 
indeed $\displaystyle \sigma=\frac{mN(2-\theta)}{N-\theta m}\geq \frac{Nm(1-p)}{N-m(1+p)}$, as $\displaystyle m\leq \frac N2$ and $p\leq\theta-1$.

In the case where $0<p\leq \theta-1$, we are able to prove the existence of a solution
to problem (\ref{pb}) with the same regularity as the solutions of problem (\ref{without}).
\begin{theorem}\label{teoremastimebdo}
Let $1\leq \theta <2$ and $0<p\leq \theta-1$. 
Assume that $f \in L^m(\Omega)$ and satisfies
$$
\mbox{ess inf}\,\{f(x): x \in \omega\}>0 
$$ 
for every $\omega\subset \subset \Omega$.
\begin{enumerate}
\item
If $\displaystyle m>\frac{N}{2}$, then there exists a strictly positive $H^1_0(\Omega)\cap L^{\infty}(\Omega)$ solution to problem (\ref{pb}). 
\item
If $\displaystyle \frac{2N}{N+2-p(N-2)}\leq m<\frac{N}{2}$, 
then there exists a strictly positive $H^1_0(\Omega)\cap L^{r}(\Omega)$ solution  to problem (\ref{pb}), where
$\displaystyle r=\frac{Nm(1-p)}{N-2m}$.
\item
If $\displaystyle \frac{N}{N+1-p(N-1)}<m<\frac{2N}{N+2-p(N-2)}$, 
then there exists a strictly positive $W^{1,s}_0(\Omega)$ solution  to problem (\ref{pb}), where $\displaystyle s=\frac{Nm(1-p)}{N-m(1+p)}$.
\end{enumerate}
Moreover $\displaystyle \frac{|\nabla u|^2}{u^{\theta}} \in L^1(\Omega)$.
\end{theorem}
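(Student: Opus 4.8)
The plan is to argue by approximation, reducing the regularity estimates to those of \cite{bdo} and dealing with the singular quadratic term by exploiting its sign together with the local positivity of the solutions. For $n\in\N$ I would set $f_n=T_n(f)$ and consider
$$
-\div\!\left(\frac{b(x)}{(1+T_n(u_n))^p}\nabla u_n\right)+B\,\frac{|\nabla u_n|^2}{\left(u_n^++\frac1n\right)^{\theta}}=f_n\quad\text{in }\Omega,\qquad u_n=0\ \text{on }\partial\Omega,
$$
the quadratic term being further truncated to a bounded Carath\'eodory function in the usual way. The truncation in the coefficient makes the principal part uniformly elliptic, so classical results for equations with bounded quadratic gradient terms (Schauder's fixed point theorem together with the $L^\infty$ bound coming from $f_n\in L^\infty$) provide a solution $u_n\in H^1_0(\Omega)\cap L^\infty(\Omega)$; taking $u_n^-$ as test function and using $f_n\ge 0$ gives $u_n\ge 0$.

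Next I would prove a uniform local lower bound: for every $\omega\subset\subset\Omega$ there is $c_\omega>0$, independent of $n$, with $u_n\ge c_\omega$ on $\omega$. This follows from $\operatorname{ess\,inf}_\omega f>0$ and the fact that on the set $\{u_n\le k\}$ the coefficient is bounded below by $\alpha(1+k)^{-p}$, via a local weak Harnack inequality or a comparison with an explicit subsolution vanishing on $\partial\omega$. It is the analogue of the positivity argument used for Theorems \ref{teorema_energia_finita}--\ref{teorema_energia_infinita2}, and it is what keeps the singular weight $u_n^{-\theta}$ under control on compact subsets.

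For the a priori estimates, observe that $0<p\le\theta-1<1$, so the principal part is exactly of the type treated in \cite{bdo}; since the added lower order term is nonnegative, testing the equation with the increasing functions of $u_n$ used there — a Stampacchia $L^\infty$ estimate when $m>\frac N2$, $\varphi=(1+u_n)^{2\lambda+1}-1$ with $\lambda$ chosen from $r$ when $\frac{2N}{N+2-p(N-2)}\le m<\frac N2$, truncation based test functions when $m$ is smaller — and discarding the nonnegative singular contribution, reproduces the bounds of \cite{bdo}: $\|u_n\|_{\infty}\le C$, or $\|u_n\|_{H^1_0}+\|u_n\|_{L^r}\le C$, or $\|u_n\|_{W^{1,s}_0}\le C$. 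In parallel, testing with $1-(1+u_n)^{1-\theta}$ (refined near $\partial\Omega$ as in the treatment of singular quadratic terms in \cite{luigietaltri,arcoya1}) and keeping the singular term, together with the lower bound of the previous step, yields a uniform bound $\io\frac{|\nabla u_n|^2}{(u_n+1/n)^{\theta}}\le C$.

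Finally I would pass to the limit. Up to a subsequence $u_n$ converges weakly in the relevant space and a.e.\ to some $u$, which is strictly positive on $\Omega$ by the local lower bound. The main obstacle is the almost everywhere convergence $\nabla u_n\to\nabla u$, needed to identify the quadratic term: I would obtain it by the Boccardo--Murat--Puel device, testing the difference of the equations with $\psi(u_n-u)\eta$, $\psi(t)=t\,e^{\gamma t^2}$ and $\eta$ a cutoff, absorbing the bad terms into the good sign of $\frac{|\nabla u_n|^2}{u_n^{\theta}}$ and using that its weight is bounded on $\operatorname{supp}\eta$; here one must check that the degenerate factor $b(x)(1+u_n)^{-p}$ does not spoil the absorption, which is where the estimates above enter. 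Then $\frac{|\nabla u_n|^2}{(u_n+1/n)^{\theta}}\to\frac{|\nabla u|^2}{u^{\theta}}$ in $L^1_{\rm loc}(\Omega)$, and a no-loss-of-mass argument for the singular term — passing to the limit in the identity obtained by testing with $1-(1+u_n)^{1-\theta}$ — upgrades this to convergence in $L^1(\Omega)$, so in particular $\frac{|\nabla u|^2}{u^{\theta}}\in L^1(\Omega)$. One then passes to the limit in the weak formulation, against $H^1_0(\Omega)\cap L^\infty(\Omega)$ in cases (1)--(2) and against $C^1_0(\Omega)$ in case (3).
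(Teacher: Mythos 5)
Your overall architecture matches the paper's: approximate, get the a priori estimates of \cite{bdo} by discarding the nonnegative singular term, prove a uniform local lower bound, obtain a.e.\ convergence of the gradients by the Boccardo--Murat--Puel device, and pass to the limit in the quadratic term by equi-integrability/no-loss-of-mass. The estimates and the limit passage are essentially what the paper does (Lemmata \ref{lemmastessobdo}, \ref{lemma_convergenza_gradienti}, \ref{ultimolemma} and the final proof of Section 4), although the paper obtains the $L^1$ bound on the singular term more simply by testing with $T_h(u_n)/h$ and letting $h\to 0$, which also covers $\theta=1$, where your test function $1-(1+u_n)^{1-\theta}$ degenerates to $0$.

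The genuine gap is the uniform local lower bound $u_n\ge c_\omega>0$, which you dispatch ``via a local weak Harnack inequality or a comparison with an explicit subsolution.'' Neither tool applies here: $u_n$ satisfies $-\div\bigl(b(x)(1+T_n(u_n))^{-p}\nabla u_n\bigr)=T_n(f)-B\,u_n|\nabla u_n|^2(u_n+\tfrac1n)^{-\theta-1}$, whose right-hand side is $f$ \emph{minus} a nonnegative quantity, so $u_n$ is only a \emph{subsolution} of the problem without the singular term; the weak Harnack inequality and comparison from below both require a supersolution. The singular quadratic absorption works \emph{against} positivity, and indeed for $\theta\ge 2$ it destroys existence altogether (Theorem \ref{thmnonexistence}), so any correct proof of the lower bound must use $\theta<2$ in an essential way --- your sketch never does. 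The paper handles this in Proposition \ref{proposition_ko}: a change of unknown $v_n=\psi_n(u_n)$ converts $u_n$ into a subsolution of a semilinear equation $-\div(b\nabla v_n)+B(x,v_n)\le 0$ with $B(x,s)\ge m_\omega l(s)$, and the Keller--Osserman condition $\int^\infty\bigl(\int_0^t l\bigr)^{-1/2}dt<\infty$ (which holds precisely because $\theta<2$) yields via Theorem \ref{K-O} an upper bound $v_n\le C_\omega$, i.e.\ $u_n\ge\psi_1^{-1}(C_\omega)>0$. (A simpler exponential-multiplier argument, Proposition \ref{funzionepositiva}, works only for $\theta<1$, which is excluded here since $0<p\le\theta-1$ forces $\theta>1$.) Until you supply this step, the strict positivity of $u$, the bound $\frac{|\nabla u|^2}{u^\theta}\in L^1(\Omega)$, and the identification of the limit of the quadratic term are all unsupported. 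A secondary remark: your regularization $B|\nabla u_n|^2(u_n^++\tfrac1n)^{-\theta}$ does not satisfy the sign condition $g(s)s\ge 0$ needed to invoke the existence results of \cite{bensoussan,bmp,bpm_siam} and to conclude $u_n\ge 0$ by testing with $-u_n^-$; the paper's choice $B\,u_n|\nabla u_n|^2(|u_n|+\tfrac1n)^{-\theta-1}$ is odd in $u_n$ and avoids this.
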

In the case where $\theta \geq 2$, the situations changes. Indeed we will prove a non-existence result of finite energy solutions.
Let $\lambda_1(f)$ 
denote the first positive eigenvalue of  
$$
\begin{cases}
\displaystyle
-\Delta u  = \lambda f u & \mbox{in $\Omega$,} \\
\hfill u = 0 \hfill & \mbox{on $\partial\Omega$,}
\end{cases}
$$
where $f$ in $L^q(\Omega)$, with $q>\frac N2$.
Using a result of \cite{luigietaltri}, it is quite easy to prove the following 
\begin{theorem}\label{thmnonexistence}
Let $f\geq 0$, $f\not\equiv 0$, be a $L^q(\Omega)$ function, with $q>\frac N2$.
If either $\theta >2$, or $\theta =2$ and $\lambda_1(f)>\frac{\beta}{B\alpha}$, then there is no
$H^1_0(\Omega)$ solution to problem (\ref{pb}). 
\end{theorem}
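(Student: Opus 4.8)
\emph{Proof sketch.} I would argue by contradiction: assume $u\in H^1_0(\Omega)$ is an $H^1_0(\Omega)$ solution in the sense of the theorems above, so $u>0$ a.e.\ in $\Omega$, $\io\frac{|\nabla u|^2}{u^\theta}<\infty$, and the weak formulation holds for every $\varphi\in H^1_0(\Omega)\cap L^\infty(\Omega)$. Observe that $u\not\equiv0$ and $\io fu>0$ since $f\ge0$, $f\not\equiv0$.

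\emph{The case $\theta=2$.} I would take $\varphi=T_k(u):=\min(u,k)\in H^1_0(\Omega)\cap L^\infty(\Omega)$ as test function and let $k\to\infty$ by monotone convergence, all three limits being finite (indeed $\io\frac{b(x)}{(1+u)^p}|\nabla u|^2\le\beta\io|\nabla u|^2$; $\tfrac1u\le\tfrac1{u^2}$ where $u\le1$ and $\tfrac1u\le1$ where $u\ge1$; and $\io fu<\infty$ because $f\in L^q(\Omega)$ with $q>\tfrac N2$ and $u\in L^{2^*}(\Omega)$), obtaining the energy identity
$$
\io\frac{b(x)}{(1+u)^p}|\nabla u|^2+B\io\frac{|\nabla u|^2}{u}=\io fu .
$$
Since $\io\frac{|\nabla u|^2}{u}<\infty$, one checks (approximating $\sqrt t$ by $\sqrt{t+\ep^2}-\ep$) that $\sqrt u\in H^1_0(\Omega)$ with $\io|\nabla\sqrt u|^2=\tfrac14\io\frac{|\nabla u|^2}{u}$. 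Plugging $v=\sqrt u$ into the variational inequality $\io|\nabla v|^2\ge\lambda_1(f)\io fv^2$ (valid for $v\in H^1_0(\Omega)$) gives $\io\frac{|\nabla u|^2}{u}\ge 4\lambda_1(f)\io fu$, and inserting this in the energy identity yields
$$
\io\frac{b(x)}{(1+u)^p}|\nabla u|^2+4B\lambda_1(f)\io fu\le\io fu .
$$
As $\nabla u\not\equiv0$ the first term is strictly positive and $\io fu>0$, hence $4B\lambda_1(f)<1$; but $\frac{\beta}{B\alpha}\ge\frac1B>\frac1{4B}$, contradicting $\lambda_1(f)>\frac{\beta}{B\alpha}$. (In fact this argument only needs $\lambda_1(f)\ge\frac1{4B}$, so it proves a bit more than stated.)

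\emph{The case $\theta>2$.} Here I would rely on the result of \cite{luigietaltri} that no positive $u\in H^1_0(\Omega)$ can have $\io\frac{|\nabla u|^2}{u^\theta}<\infty$ when $\theta>2$. The mechanism is the change of unknown $v:=u^{1-\theta/2}$: from $|\nabla v|^2=(\tfrac{\theta}{2}-1)^2\frac{|\nabla u|^2}{u^\theta}\in L^1(\Omega)$ and the local positivity of $u$ (with $\Omega$ bounded) one gets $v\in H^1(\Omega)$, so $v$ has a finite trace on $\partial\Omega$ and $v\in L^{2^*}(\Omega)$. On the other hand $1-\theta/2<0$, so $v$ is large exactly where $u$ is small, i.e.\ near $\partial\Omega$ where $u$ vanishes in the trace sense (as $u\in H^1_0(\Omega)$) — so $v$ cannot have a finite trace, a contradiction. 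Equivalently, $\io u^{-(\theta-2)2^*/2}=\io|v|^{2^*}<\infty$ together with Hardy's inequality $\io\frac{u^2}{d^2}\le C\io|\nabla u|^2<\infty$ ($d(x)=\mathrm{dist}(x,\partial\Omega)$) and Hölder's inequality leads to $\io d^{-1}<\infty$, which is impossible. Either way no finite energy solution exists when $\theta>2$.

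\emph{Main difficulty.} The $\theta=2$ half reduces to the computation above, the only non-routine points being the truncation/limit passage and the membership $\sqrt u\in H^1_0(\Omega)$. The substantive obstacle — and the reason for appealing to \cite{luigietaltri} — is the case $\theta>2$: one must justify the chain rule for the non-Lipschitz powers of $u$, pin down precisely which powers $u^s$ belong to $H^1(\Omega)$ respectively $H^1_0(\Omega)$, and then read off the boundary contradiction, possibly distinguishing subranges of $\theta$ according to whether $(\theta-2)\,2^*/2$ exceeds $2$.
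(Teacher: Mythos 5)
Your argument is essentially correct, but it follows a genuinely different route from the paper. The paper does not touch the equation directly: it performs the change of unknown $v=\frac{1-(1+u)^{1-p}}{p-1}$ (or $v=\ln(1+u)$ if $p=1$), which absorbs the degenerate coefficient and turns (\ref{pb}) into a uniformly elliptic problem $-\div(b(x)\nabla v)+Bg(v)|\nabla v|^2=f$ with $g(s)\sim s^{-\theta}$ as $s\to0^+$, and then invokes the non-existence theorem of \cite{luigietaltri} wholesale; this treats $\theta>2$ and $\theta=2$ in one stroke and is where the constant $\frac{\beta}{B\alpha}$ comes from. Your treatment of $\theta=2$ is instead self-contained and elementary: the energy identity obtained with $T_k(u)$, the identity $\io|\nabla\sqrt u|^2=\frac14\io\frac{|\nabla u|^2}{u}$ and the Rayleigh quotient for $\lambda_1(f)$ give non-existence already when $\lambda_1(f)>\frac{1}{4B}$, which is \emph{stronger} than the stated threshold (precisely because the degenerate operator term can simply be discarded as non-negative); this is a genuine, if small, improvement worth recording. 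For $\theta>2$ you, like the paper, ultimately lean on \cite{luigietaltri}, and that is legitimate since the relevant fact there is purely functional: no $u\in H^1_0(\Omega)$ with $u>0$ a.e.\ can satisfy $\io\frac{|\nabla u|^2}{u^\theta}<\infty$. I would, however, not rest on the trace/Hardy heuristics you sketch: the trace operator does not commute with the singular composition $u\mapsto u^{1-\theta/2}$, Hardy's inequality and the divergence of $\io d^{-1}$ require boundary regularity the paper never assumes, and your Hölder step only closes for $\theta\geq 2+\frac{2(N-2)}{N}$. The robust version is the one in \cite{luigietaltri}: $w_\ep=\ep^{(2-\theta)/2}-(u+\ep)^{(2-\theta)/2}$ is a Lipschitz truncation of $u$ vanishing at $0$, hence lies in $H^1_0(\Omega)$ with $\io|\nabla w_\ep|^2\leq\bigl(\tfrac{\theta-2}{2}\bigr)^2\io\frac{|\nabla u|^2}{u^\theta}$ uniformly in $\ep$, while $w_\ep\to+\infty$ a.e.; Sobolev plus Fatou then give the contradiction with no hypothesis on $\partial\Omega$.
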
  

%%%%%%%%%%%%%%%%%%%%%%%%%%%%%%%%%%%%%%%%%%%%%%%%%%%%%%%%55
%%%%%%%%%%%%%%%%%%%%%%%%%%%%%%%%%%%%%%%%%%%%%%%%%%%%%%%%%%5
%%%%%%%%%%%%%%%%%%%%%%%%%%%%%%%%%%%%%%%%%%%%%%%%%%%%%%%%%%%%5
%%%%%%%%%%%%%%%%%%%%%%%%%%%%%%%%%%%%%%%%%%%%%%%%%%%%%%%%%%%%%%

\section{A priori estimates}\label{aprioriestimates}
To prove the existence of solutions to problem (\ref{pb}) we  use the following approximating problems: 
$$
\begin{cases}
\displaystyle
-\div\left(\frac{b(x)}{(1+|T_n(u_n)|)^p}{\nabla u_n}\right) + B \frac{u_n|\nabla u_n|^2}{(|u_n|+\frac 1n)^{\theta+1}} = T_n(f) & \mbox{in $\Omega$,}\\
\hfill u_n = 0 \hfill & \mbox{on $\partial\Omega$,}
\end{cases}
$$
where, for $n \in \N$ and $s\in \R$ 
$$
T_n(s)=\max\{-n,\min\{n,s\}\}\,.
$$ 
These problems are well-posed due to the following result proved in  \cite{bensoussan, bmp, bpm_siam}. 
\begin{theorem}\label{thm_bpm}
Let $f$ be a bounded function. Let $M:\Omega\times \R\to \R^{N^2}$ be a Carath\'edory function 
such that there exist two positive constants $\alpha_0$ an $\beta_0$ such that 
$$
M(x,s)\xi\cdot \xi\geq \alpha_0 |\xi|^2\,,\,\,\,\,\,\,|M(x,s)|\leq \beta_0
$$
for a.e. $x \in \Omega$, for every $(s,\xi) \in \R\times\R^N$.
Let
$g(s)$ be  a Carath\'eodory function such that $g(s)s\geq 0$, $|g(s)|\leq \gamma(s)$, where $\gamma$ is a  continuous, non-negative and increasing function. Then
there exists a $H^{1}_0(\Omega)$ bounded solution to
$$
\begin{cases}
\displaystyle
-\div(M(x,u)\nabla u) + g(u)|\nabla u|^2= f & \mbox{in $\Omega$,} \\
\hfill u = 0 \hfill & \mbox{on $\partial\Omega$\,.}
\end{cases}
$$
\end{theorem}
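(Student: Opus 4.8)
The plan is the standard approximation-and-compactness argument (Bensoussan--Boccardo--Murat, Boccardo--Murat--Puel): approximate by problems with a bounded lower order term, extract from the sign condition $g(s)s\ge0$ together with $f\in L^\infty$ a bound uniform in $L^\infty(\Omega)$ (this is the essential point), and then cope with the quadratic growth of the gradient term by testing with an exponential function of the error.

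Concretely, for $n\in\N$ let $g_n=T_n\circ g$, which is bounded and still satisfies $g_n(s)s\ge0$ and $|g_n(s)|\le\min\{n,\gamma(s)\}$, and solve the approximate problem
\[
-\div(M(x,u_n)\nabla u_n)+\frac{g_n(u_n)|\nabla u_n|^2}{1+\frac1n|\nabla u_n|^2}=f\ \text{ in }\Omega,\qquad u_n=0\ \text{ on }\partial\Omega ,
\]
which has a solution $u_n\in H^1_0(\Omega)$ since its lower order term is bounded in $L^\infty$ (freeze the $u_n$-dependence of the coefficients and apply Leray--Lions theory and Schauder's fixed point theorem). Testing with $u_n$ and dropping the lower order term by the sign condition gives $\alpha_0\|\nabla u_n\|_{L^2}^2\le\int_\Omega f u_n$, hence a uniform $H^1_0$ bound; testing with the truncations $G_k(u_n)=u_n-T_k(u_n)$ — again the lower order term has the good sign, so $\alpha_0\int_{\{|u_n|>k\}}|\nabla u_n|^2\le\|f\|_{L^\infty}\int_{\{|u_n|>k\}}|G_k(u_n)|$ — and Stampacchia's iteration lemma gives $\|u_n\|_{L^\infty(\Omega)}\le C$ with $C=C(\|f\|_{L^\infty},\alpha_0,N,|\Omega|)$. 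In particular $|g_n(u_n)|\le\gamma(C)=:M_0$ a.e., uniformly in $n$.

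The main obstacle is then the strong convergence $\nabla u_n\to\nabla u$ in $L^2(\Omega)$ (along a subsequence with $u_n\rightharpoonup u$ in $H^1_0$, $u_n\to u$ a.e.\ and in $L^q$, $\|u\|_{L^\infty}\le C$), which is what allows passage to the limit in the quadratic term. I would obtain it by the device of Boccardo--Murat--Puel: choose $\psi(s)=s\,e^{\mu s^2}$ with $\mu=(M_0/(2\alpha_0))^2$, so that $\alpha_0\psi'(s)-M_0|\psi(s)|\ge\alpha_0/2$ for all $s$, and test the equation for $u_n$ with $\psi(u_n-u)\in H^1_0\cap L^\infty$. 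Writing $z_n=u_n-u$, one checks that $\int M(x,u_n)\psi'(z_n)\nabla u\cdot\nabla z_n\to0$ and $\int f\psi(z_n)\to0$ (strong--weak convergence and dominated convergence), that the quadratic term is $\ge-M_0\int|\nabla z_n|^2|\psi(z_n)|-o(1)$, and that $\int M(x,u_n)\psi'(z_n)|\nabla z_n|^2\ge\alpha_0\int\psi'(z_n)|\nabla z_n|^2$; combining, $\int(\alpha_0\psi'(z_n)-M_0|\psi(z_n)|)|\nabla z_n|^2\le o(1)$, and the choice of $\mu$ yields $\frac{\alpha_0}{2}\int|\nabla z_n|^2\le o(1)$, i.e.\ $\nabla u_n\to\nabla u$ in $L^2(\Omega)$.

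Finally, from this convergence $|\nabla u_n|^2\to|\nabla u|^2$ in $L^1(\Omega)$ and a.e.\ (with an $L^1$ majorant), $g_n(u_n)\to g(u)$ a.e.\ with $|g_n(u_n)|\le M_0$, and $(1+\frac1n|\nabla u_n|^2)^{-1}\to1$ a.e., so Vitali's theorem gives $\frac{g_n(u_n)|\nabla u_n|^2}{1+\frac1n|\nabla u_n|^2}\to g(u)|\nabla u|^2$ in $L^1(\Omega)$, while $M(x,u_n)\nabla u_n\to M(x,u)\nabla u$ in $L^2(\Omega)$ by the Carath\'eodory continuity of $M$. Passing to the limit in the weak formulation then shows that $u\in H^1_0(\Omega)\cap L^\infty(\Omega)$ solves the problem.
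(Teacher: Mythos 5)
The paper offers no proof of this statement: it is quoted as a known result from \cite{bensoussan, bmp, bpm_siam}, and your argument is precisely the standard proof from those references --- bounded approximations of the quadratic term, the sign condition $g(s)s\ge 0$ plus Stampacchia truncation to get the uniform $H^1_0(\Omega)\cap L^\infty(\Omega)$ bounds, the exponential test function $s\,e^{\mu s^2}$ applied to $u_n-u$ for the strong $L^2$ convergence of the gradients, and Vitali's theorem to pass to the limit in the lower order term. The only slip is quantitative and harmless: since $|\nabla u_n|^2\le 2|\nabla z_n|^2+2|\nabla u|^2$, the lower bound on the quadratic term carries the constant $2M_0$ rather than $M_0$, so one should take $\mu\ge (M_0/\alpha_0)^2$; nothing else in the argument changes.
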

By Theorem \ref{thm_bpm} the solutions $u_n$ of the above approximating problems  are bounded $H^1_0(\Omega)$ non-negative functions, 
since $f$ is assumed 
to be positive and the lower order term  has the same sign as $u_n$. This implies that $u_n$ 
satisfies
\begin{equation}\label{pa}
\begin{cases}
\displaystyle
-\div\left(\frac{b(x)}{(1+T_n(u_n))^p}{\nabla u_n}\right) + B \frac{u_n|\nabla u_n|^2}{(u_n+\frac 1n)^{\theta+1}} = T_n(f) & \mbox{in $\Omega$,}\\
\hfill u_n = 0 \hfill & \mbox{on $\partial\Omega$.}
\end{cases}
\end{equation}
We are now going to prove some a priori estimates.
The next lemma gives  a control of the lower order term.
\begin{lemma}
Let $u_n$ be the solutions to problems (\ref{pa}). Then it results
\begin{equation}\label{primastima}
B\io \frac{u_n|\nabla u_n|^2}{(u_n+\frac 1n)^{\theta+1}}
\leq
\io f\,.
\end{equation}
\end{lemma}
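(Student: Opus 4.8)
The plan is to test the weak formulation of the approximating problem (\ref{pa}) with a bounded $H^1_0(\Omega)$ function that increases, as a parameter tends to zero, to the constant $1$; in this way the principal part contributes a nonnegative quantity that can be discarded and then disappears in the limit, while the lower order term survives by monotone convergence. Since each $u_n$ is a nonnegative bounded $H^1_0(\Omega)$ function, for $\delta>0$ the function $\varphi_\delta=\frac1\delta\,T_\delta(u_n)$ lies in $H^1_0(\Omega)\cap L^\infty(\Omega)$, so it is an admissible test function in (\ref{pa}).

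First I would insert $\varphi_\delta$ into the weak formulation. Using the chain rule for truncations, $\nabla\varphi_\delta=\frac1\delta\,\chi_{\{u_n<\delta\}}\nabla u_n$, and since $b\geq\alpha>0$ the diffusion term satisfies
$$\io\frac{b(x)}{(1+T_n(u_n))^p}\,\nabla u_n\cdot\nabla\varphi_\delta=\frac1\delta\io\frac{b(x)}{(1+T_n(u_n))^p}\,|\nabla u_n|^2\,\chi_{\{u_n<\delta\}}\geq 0\,.$$
Dropping this nonnegative term, and using $0\leq\varphi_\delta\leq1$, $T_n(f)\leq f$ and $f\geq0$, one obtains
$$B\io\frac{u_n|\nabla u_n|^2}{(u_n+\frac1n)^{\theta+1}}\,\varphi_\delta\leq\io T_n(f)\,\varphi_\delta\leq\io f\,.$$

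Finally I would let $\delta\to0^+$. On the set $\{u_n>0\}$ one has $\varphi_\delta\nearrow1$, while $\varphi_\delta\equiv0$ on $\{u_n=0\}$, where however $\nabla u_n=0$ almost everywhere; since the integrand $\frac{u_n|\nabla u_n|^2}{(u_n+\frac1n)^{\theta+1}}$ is nonnegative, the monotone convergence theorem yields exactly (\ref{primastima}). There is no real difficulty in this argument: the only subtlety is that the constant $1$ is not by itself an admissible test function, since it does not belong to $H^1_0(\Omega)$, which is precisely why the truncations $\varphi_\delta$ and the passage $\delta\to0$ are needed; an equivalent route would be to test with $\varphi=1-(1+u_n)^{-\lambda}$ and let $\lambda\to+\infty$, with the same effect.
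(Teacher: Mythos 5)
Your proof is correct and is essentially the paper's own argument: the paper also tests (\ref{pa}) with $T_h(u_n)/h$, drops the nonnegative principal part, and lets $h\to 0$ (using Fatou's lemma where you invoke monotone convergence, an immaterial difference since $T_\delta(u_n)/\delta$ increases to $\chi_{\{u_n>0\}}$ and the integrand vanishes where $u_n=0$).
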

\begin{proof}
Let us consider $\displaystyle \frac{T_h(u_n)}{h}$, $h>0$, as a test function in (\ref{pa}). We have, dropping the non-negative operator term,
$$
B\io \frac{|\nabla u_n|^2 u_n}{(u_n+\frac 1n)^{\theta +1}}\frac{T_h(u_n)}{h}
\leq
\io f\frac{T_h(u_n)}{h}\,.
$$
It is now sufficient to pass to the limit as $h\to 0$, using Fatou's lemma and the fact that
$\displaystyle \frac{T_h(u_n)}{h}\to 1$ as $h\to 0$.
\end{proof}
We  prove now two a priori estimates on  $u_n$, which
 are true for every $p>0$ and $\theta \in (0,2)$. 
In the sequel $C$ will denote a positive constant independent of $n$; $\mu(E)$ will be the Lebesgue measure of a set $E\subset \R^N$.
\begin{lemma}\label{primolemmaaprioriestimates}
Let $0< \theta <2$. Let 
$f$ be a positive function belonging to $L^m(\Omega)$, with $\displaystyle m \geq \frac{2N}{2N-\theta (N - 2)}$.
Then the solutions $u_n$ to problems (\ref{pa}) are uniformly bounded in $H^1_0(\Omega)$.
Thus there exists a function $u \in H^1_0(\Omega)$ such that,
up to a subsequence, $u_n\to u$ weakly in $H^1_0(\Omega)$ and a.e. in $\Omega$. 
\end{lemma}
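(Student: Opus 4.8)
The plan is to obtain the uniform $H^1_0(\Omega)$ bound by testing (\ref{pa}) with $\varphi_n=(1+u_n)^{\theta}-1$, which is admissible because each $u_n$ is a bounded $H^1_0(\Omega)$ function, and which is non-negative since $u_n\ge 0$. The exponent $\theta$ is chosen on purpose: the right-hand side will be bounded by $\io f(1+u_n)^\theta\le\|f\|_{L^m(\Omega)}\big(\io(1+u_n)^{\theta m'}\big)^{1/m'}$, and the hypothesis $m\ge\frac{2N}{2N-\theta(N-2)}=\big(\frac{2^*}{\theta}\big)'$ is precisely $\theta m'\le 2^*$, so this term will be controllable through Sobolev's inequality once a suitable lower bound for the left-hand side is available.

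Both terms on the left are non-negative, so I would estimate each of them from below on a convenient part of $\Omega$. For the operator term, using $T_n(u_n)\le u_n$ and (\ref{hyp_a}),
\[
\io\frac{b(x)}{(1+T_n(u_n))^p}\nabla u_n\cdot\nabla\varphi_n\ \ge\ \alpha\theta\io(1+u_n)^{\theta-1-p}|\nabla u_n|^2\ \ge\ \alpha\theta\,c_p\int_{\{u_n\le1\}}|\nabla u_n|^2,
\]
with $c_p=\min\{1,2^{\theta-1-p}\}>0$; the point is that on $\{u_n\le1\}$ one has $1\le 1+u_n\le2$, so no restriction on $p$ is needed. For the lower order term I would use only the region $\{u_n>1\}$, where $u_n>1/n$, hence $\frac{u_n}{(u_n+1/n)^{\theta+1}}\ge 2^{-\theta-1}u_n^{-\theta}$, and where $(1+u_n)^\theta-1\ge c_\theta u_n^\theta$ (the map $s\mapsto((1+s)^\theta-1)/s^\theta$ is continuous and positive on $[1,\infty)$ with limit $1$, hence bounded below by some $c_\theta>0$). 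This gives
\[
B\io\frac{u_n|\nabla u_n|^2}{(u_n+\frac1n)^{\theta+1}}\big((1+u_n)^\theta-1\big)\ \ge\ \frac{Bc_\theta}{2^{\theta+1}}\int_{\{u_n>1\}}|\nabla u_n|^2,
\]
and adding the two bounds yields $C\io|\nabla u_n|^2\le\io f(1+u_n)^\theta$ with $C>0$ independent of $n$.

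To close the argument, I would combine H\"older's inequality, the relation $\theta m'\le2^*$, the boundedness of $\Omega$ and Sobolev's inequality to obtain $\io f(1+u_n)^\theta\le C'\big(1+\|\nabla u_n\|_{L^2(\Omega)}^{\theta}\big)$, whence $\|\nabla u_n\|_{L^2(\Omega)}^2\le C''\big(1+\|\nabla u_n\|_{L^2(\Omega)}^{\theta}\big)$. Since $\theta<2$, Young's inequality absorbs the last term and produces a bound on $\|\nabla u_n\|_{L^2(\Omega)}$ independent of $n$, i.e. $u_n$ is bounded in $H^1_0(\Omega)$. Reflexivity of $H^1_0(\Omega)$ then yields a subsequence converging weakly in $H^1_0(\Omega)$ to some $u$, and the Rellich--Kondrachov theorem upgrades this, along a further subsequence, to strong $L^2(\Omega)$ and a.e. convergence.

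I expect the main obstacle to be exactly the necessity of exploiting the singular lower order term rather than discarding it — something that is forced when $p\ge1$, a range in which the purely degenerate problem (\ref{without}) has no finite-energy solution. The degenerate operator alone recovers $\io|\nabla u_n|^2$ only on a set where $u_n$ is bounded; on the set where $u_n$ is large it is the term $B\frac{|\nabla u_n|^2}{u_n^\theta}$ that does the work, its singular weight $u_n^{-\theta}$ being exactly compensated by the factor $u_n^\theta$ produced by the test function $(1+u_n)^\theta-1$, leaving a clean $|\nabla u_n|^2$. The competing requirement that the right-hand side $\io f(1+u_n)^\theta$ stay within reach of Sobolev's inequality is what pins down the threshold $m\ge\big(\frac{2^*}{\theta}\big)'$, and a little care near $\{u_n=0\}$ is why the lower order term is used only on $\{u_n>1\}$.
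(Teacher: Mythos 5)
Your proof is correct and follows essentially the same route as the paper: the same test function $(1+u_n)^{\theta}-1$, the same splitting of $\Omega$ into $\{u_n\le 1\}$ and $\{u_n>1\}$ (operator term where $u_n$ is small, singular quadratic term where $u_n$ is large), and the same H\"older--Sobolev closure using $\theta<2$ and $m\ge\left(\frac{2^*}{\theta}\right)'$. The only, harmless, difference is organizational: by keeping both non-negative left-hand terms from a single test function you avoid the paper's preliminary estimate (\ref{primastima}) and its separate test with $T_1(u_n)$ on the set $\{u_n<1\}$.
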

\begin{proof}
The assertion follows by proving that
the solutions $u_n$ to problems (\ref{pa}) are uniformly bounded in $H^1_0(\Omega)$.
If we take $(u_n+1)^{\theta}-1$ as a test function in problem (\ref{pa})
we obtain
$$
B\io \frac{|\nabla u_n|^2}{(u_n+\frac 1n)^{\theta+1}}u_n (u_n+1)^{\theta}\leq
B\io \frac{|\nabla u_n|^2}{(u_n+\frac 1n)^{\theta+1}}u_n+ \io fu_n^{\theta}+C\,,
$$ 
dropping the positive operator term.
We can estimate the right hand side using (\ref{primastima}) in order to get
$$
B\io \frac{|\nabla u_n|^2}{(u_n+\frac 1n)^{\theta+1}}u_n (u_n+1)^{\theta}\leq
\io fu_n^{\theta}+C\,.
$$
By working in $\{u_n\geq  1\}$,
the previous inequality gives
$$
\frac B2\int\limits_{\{u_n\geq  1\}} {|\nabla u_n|^2}\leq
\io fu_n^{\theta}+C\leq \int\limits_{\{u_n\geq 1\}} fu_n^{\theta}+C\leq C\int\limits_{\{u_n\geq 1\}} f(u_n-1)^{\theta}+C\,.
$$
We use the Sobolev inequality in the left hand side and the H\"older inequality with exponent $\displaystyle \frac{2^*}{\theta}$ in the last term, 
recalling  that $f$ belongs to $L^{m}(\Omega)$ with $\displaystyle m \geq \frac{2N}{2N-\theta (N - 2)}=\left(\frac{2^{*}}{\theta}\right)'$. Thus
\begin{equation}\label{michaela}
\mathcal{S}\frac B2\left[\int\limits_{\{u_n\geq 1\}} {(u_n-1)^{2^*}}\right]^{\frac{2}{2^*}}
\leq
\frac B2\int\limits_{\{u_n\geq 1\}} {|\nabla u_n|^2}
\leq C
\left[\int\limits_{\{u_n\geq 1\}} (u_n-1)^{2^*}\right]^{\frac{\theta}{2^*}}
+C\,.
\end{equation}
Since we are assuming $\theta<2$, we deduce  that
$$
\displaystyle \int\limits_{\{u_n\geq 1\}} (u_n-1)^{2^*}\leq C\,.
$$ 
It follows from (\ref{michaela}) that
\begin{equation}\label{stima_un_grande}
\int\limits_{\{u_n\geq 1\}}|\nabla u_n|^{2}\leq {C}\,.
\end{equation}
Let us  search for the same kind of estimate in $\{u_n<1\}$.
Taking $T_{1}(u_n)$ as a test function in problem (\ref{pa}), we get
\begin{equation}\label{stima_un_piccolo}
\frac{\alpha}{2^{p}}\int\limits_{\{u_n<1\}}{|\nabla T_{1}(u_n)|^2}
\leq
\alpha\int\limits_{\{u_n< 1\}}\frac{|\nabla T_{1}(u_n)|^2}{(1+u_n)^{p}}
\leq
\io fT_{1}(u_n)\leq \io f
\end{equation}
using  hypothesis (\ref{hyp_a}) and dropping the non-negative lower order term.
As a consequence of estimates (\ref{stima_un_grande}) and (\ref{stima_un_piccolo}), $u_n$ is uniformly bounded in $H^{1}_0(\Omega)$.
By compactness,
there exists a function $u \in H^1_0(\Omega)$ such that,
up to a subsequence, $u_n\to u$ weakly in $H^1_0(\Omega)$ and a.e. in $\Omega$.
\end{proof}

\begin{lemma}\label{secondolemmaaprioriestimates}
Let $0<\theta <2$.
Let $f$ be a positive function belonging to $L^m(\Omega)$, with $\displaystyle \frac{N}{2N-\theta (N - 1)}< m < \frac{2N}{2N-\theta (N - 2)}$.
Then the solutions $u_n$ to problems (\ref{pa}) are uniformly bounded in 
$\displaystyle W^{1,\sigma}_0(\Omega), \sigma=\frac{mN(2-\theta)}{N-\theta m}$.
Thus there exists a function $u \in W^{1,\sigma}_0(\Omega)$ such that,
up to a subsequence, $u_n\to u$ weakly in $W^{1,\sigma}_0(\Omega)$ and a.e. in $\Omega$.
\end{lemma}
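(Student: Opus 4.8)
The plan is to follow the scheme of Lemma~\ref{primolemmaaprioriestimates}, but since now $m$ lies \emph{below} the threshold $\left(\frac{2^{*}}{\theta}\right)'$ one cannot hope for an $H^1_0$ bound. Instead one first extracts from the singular lower order term an estimate of the form $\io (1+u_n)^{\gamma-\theta}|\nabla u_n|^{2}\leq C\io f(1+u_n)^{\gamma}$ for a suitable $\gamma>0$, then converts it, via the Sobolev inequality, into an $L^{\sigma^{*}}$ bound on $u_n$ (where $\sigma^{*}=\frac{Nm(2-\theta)}{N-2m}$ will turn out to be precisely the Sobolev conjugate of $\sigma$), and finally recovers the $W^{1,\sigma}_0(\Omega)$ bound by a Hölder interpolation between these two controls.

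First I would test~(\ref{pa}) with $\varphi=(1+u_n)^{\gamma}-1$, which is admissible in $H^1_0(\Omega)\cap L^{\infty}(\Omega)$ since each $u_n$ is a bounded $H^1_0$ function. On $\{u_n\geq 1\}$ I keep the singular lower order term and discard the (non-negative) operator term, using $\frac{u_n}{(u_n+\frac1n)^{\theta+1}}\geq\frac{c}{(1+u_n)^{\theta}}$ and $(1+u_n)^{\gamma}-1\geq c'(1+u_n)^{\gamma}$; on $\{u_n<1\}$ I keep instead the operator term and discard the lower order one, exploiting~(\ref{hyp_a}) exactly as in~(\ref{stima_un_piccolo}). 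Since $T_n(f)\leq f$ and $(1+u_n)^{\gamma}-1\leq(1+u_n)^{\gamma}$, this gives
\[
\io (1+u_n)^{\gamma-\theta}|\nabla u_n|^{2}\ \leq\ C\io f(1+u_n)^{\gamma},
\]
i.e.\ $(1+u_n)^{\frac{\gamma-\theta+2}{2}}-1$ is bounded in $H^1_0(\Omega)$ in terms of the right hand side (note $\gamma-\theta+2>0$).

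Next I would apply the Sobolev inequality to $(1+u_n)^{\frac{\gamma-\theta+2}{2}}$ and Hölder's inequality with exponents $m,m'$ to $\io f(1+u_n)^{\gamma}$, and choose $\gamma$ so that the power of $(1+u_n)$ generated on the left, $\frac{(\gamma-\theta+2)2^{*}}{2}$, coincides with the one on the right, $\gamma m'$; this forces $\gamma=\frac{(2-\theta)2^{*}}{2m'-2^{*}}$, which is positive precisely because $m<\frac N2$ — and indeed $\frac{2N}{2N-\theta(N-2)}\leq\frac N2$, so this holds throughout the range. Since under the same condition $\frac{1}{m'}<\frac{2}{2^{*}}$, the resulting inequality is of absorbing type and yields $\io (1+u_n)^{\sigma^{*}}\leq C$ with $\sigma^{*}=\frac{(\gamma-\theta+2)2^{*}}{2}=\frac{Nm(2-\theta)}{N-2m}$. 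Feeding this back into the previous step gives $\io (1+u_n)^{\gamma-\theta}|\nabla u_n|^{2}\leq C$, and then Hölder's inequality with exponents $\frac2\sigma$ and $\frac{2}{2-\sigma}$ (legitimate since $\sigma<2$) gives
\[
\io |\nabla u_n|^{\sigma}\ \leq\ \left(\io (1+u_n)^{\gamma-\theta}|\nabla u_n|^{2}\right)^{\!\frac\sigma2}\left(\io (1+u_n)^{\frac{(\theta-\gamma)\sigma}{2-\sigma}}\right)^{\!\frac{2-\sigma}{2}}.
\]
The key point is that, with the above $\gamma$, one has exactly $\frac{(\theta-\gamma)\sigma}{2-\sigma}=\sigma^{*}$ (equivalently, $\sigma^{*}$ is the Sobolev conjugate of the stated $\sigma=\frac{mN(2-\theta)}{N-\theta m}$), so the last factor is bounded and hence $\io|\nabla u_n|^{\sigma}\leq C$. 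As $\sigma>1$ in the given range of $m$, the space $W^{1,\sigma}_0(\Omega)$ is reflexive, so a subsequence of $(u_n)$ converges weakly in $W^{1,\sigma}_0(\Omega)$, and by compact embedding also a.e.\ in $\Omega$.

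The main obstacle I expect is the numerology: the exponent $\gamma$ of the test function must simultaneously make the Sobolev power and the Hölder power on the two sides agree (so that the estimate closes by absorption), keep $\gamma-\theta+2>0$, and produce exactly the $L^{\sigma^{*}}$ bound that makes the final Hölder split land on the claimed $\sigma$. These three requirements are compatible only because $\sigma^{*}$ is the Sobolev conjugate of $\sigma$, and checking this identity, together with the sign and admissibility bookkeeping in the decomposition $\Omega=\{u_n<1\}\cup\{u_n\geq 1\}$, is where the real work lies. Near the lower endpoint of the range, where $f\in L^{1}(\Omega)$, the same conclusion can alternatively be obtained directly from estimate~(\ref{primastima}) together with the bound~(\ref{stima_un_piccolo}) near $\{u_n=0\}$.
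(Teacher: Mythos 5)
Your proposal is correct and is essentially the paper's own argument: your test function $(1+u_n)^{\gamma}-1$ with $\gamma=\frac{(2-\theta)2^{*}}{2m'-2^{*}}$ is exactly the paper's $(u_n+1)^{\theta+2\gamma_0}-1$ with $\gamma_0=\frac{2^{*}-\theta m'}{2m'-2^{*}}$ (so $\gamma=\theta+2\gamma_0$), and the subsequent Sobolev--H\"older absorption, the identity $\frac{(\theta-\gamma)\sigma}{2-\sigma}=\sigma^{*}=(\gamma_0+1)2^{*}$, and the separate $T_1(u_n)$ estimate on $\{u_n<1\}$ all match the paper's proof. The only cosmetic difference is the reparametrization of the exponent and your explicit remark that $\sigma^{*}$ is the Sobolev conjugate of $\sigma$, which the paper leaves implicit.
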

\begin{proof}
The assertion follows by proving that the solutions  $u_n$ to problems (\ref{pa}) 
are uniformly bounded in $W^{1,\sigma}_0(\Omega)$.
Take $(u_n+1)^{\theta+2\gamma}-1$, with $\displaystyle \gamma = \frac{2^{*}-\theta m'}{2m' - 2^{*}}$,  as a test function in problems (\ref{pa}).
Note that 
$\gamma<0$: indeed $2^*-\theta m'<0$ and $2m'-2^*> 0$, since $\displaystyle m< \frac N2$. Moreover, $\displaystyle \theta+2\gamma=\frac{2^*(2-\theta)}{2m'-2^*}>0$, as $\theta<2$.
Dropping the non-negative operator term and using estimate 
(\ref{primastima}), we get
$$
B\io \frac{|\nabla u_n|^2}{(u_n+\frac 1n)^{\theta+1}}u_n(u_n+1)^{2\gamma+\theta}\leq
\io f(u_n+1)^{2\gamma+\theta}+C\,.
$$
By working in $\{u_n\geq 1\}$ the previous inequality gives
\begin{equation}\label{primasecondolemma}
\begin{array}{c}
\displaystyle\frac {B}{2(\gamma+1)^2}\int\limits_{\{u_n\geq 1\}} \left|\nabla\left[(u_n+1)^{\gamma+1}-2^{\gamma+1}\right]\right|^2
\leq
\frac {B}{2}\int\limits_{\{u_n\geq 1\}} |\nabla u_n|^2 (u_n+1)^{2\gamma}
\\
\displaystyle
\leq
\int\limits_{\{u_n\geq 1\}} f(u_n+1)^{2\gamma+\theta}+
\int\limits_{\{u_n\leq 1\}} f(u_n+1)^{2\gamma+\theta}
+C
\leq
\int\limits_{\{u_n\geq 1\}} f(u_n+1)^{2\gamma+\theta}+
C
\,.
\end{array}
\end{equation}
The H\"older inequality on the right hand side and the Sobolev inequality on the left one
imply
\begin{equation}\label{prima_stima_energia_infinita}
\begin{array}{c}
\displaystyle \mathcal{S}\left[\int\limits_{\{u_n\geq 1\}} {[(u_n+1)^{\gamma+1}-2^{\gamma+1}]^{2^*}}\right]^{\frac{2}{2^*}}
\leq C \int\limits_{\{u_n\geq 1\}} |\nabla u_n|^2 (u_n+1)^{2\gamma}
\\
\displaystyle\leq C+
C \left[\int\limits_{\{u_n\geq 1\}} (u_n+1)^{(2\gamma+\theta)m'}\right]^{\frac{1}{m'}}\,.
\end{array}
\end{equation}
We remark that the choice of $\gamma$ is equivalent to require $(\gamma+1)2^*=(2\gamma+\theta)m'$; 
moreover $\displaystyle \frac{2}{2^*}\geq \frac{1}{m'}$, due to the hypotheses on $m$ and $\theta$. 
Hence
\begin{equation}\label{prima_stima_energia_infinita_bis}
\int\limits_{\{u_n\geq 1\}} (u_n+1)^{(\gamma+1)2^*}
=\int\limits_{\{u_n\geq 1\}} (u_n+1)^{(2\gamma+\theta)m'}\leq C\,\,\,\,\,\,\,\forall\,n\in \mathbb{N}\,.
\end{equation}
Now,  with $\displaystyle \sigma=\frac{mN(2-\theta)}{N-\theta m}$ as in the statement, and recalling that $\gamma < 0$, let us write
$$
\int\limits_{\{u_n\geq 1\}} |\nabla u_n|^{\sigma}=
\int\limits_{\{u_n\geq 1\}} \frac{|\nabla u_n|^{\sigma}}{(u_n+1)^{-\gamma \sigma}}(u_n+1)^{-\gamma \sigma}\,.
$$
The H\"older inequality  with exponent $\displaystyle \frac{2}{\sigma}$ and estimates (\ref{prima_stima_energia_infinita}) and (\ref{prima_stima_energia_infinita_bis})
give
\begin{equation}\label{stima2ungrande}
\int\limits_{\{u_n\geq 1\}} |\nabla u_n|^{\sigma}
\leq 
\left[
\int\limits_{\{u_n\geq 1\}} \frac{|\nabla u_n|^{2}}{(u_n+1)^{-2\gamma}}
\right]^{\frac{\sigma}{2}}
\left[\int\limits_{\{u_n\geq 1\}}(u_n+1)^{-\gamma\sigma\frac{2}{2-\sigma}}\right]^{\frac{2-\sigma}{2}}\leq C
\end{equation}
since
$\displaystyle -\gamma\frac{2\sigma}{2-\sigma}=(\gamma+1)2^*$. 
It remains to analyse the behaviour of  $\nabla u_n$ on  $\{u_n\leq 1\}$.
Taking $T_1(u_n)$ as a test function in (\ref{pa}) and dropping the non-negative the lower order term, we get
$$
\frac{\alpha}{2^p}\int\limits_{\{u_n\leq 1\}}{|\nabla T_{1}(u_n)|^2}
\leq
\alpha\int\limits_{\{u_n\leq 1\}}\frac{|\nabla T_{1}(u_n)|^2}{(1+u_n)^{p}}
\leq
\io fT_{1}(u_n)\leq \io f
$$
by hypothesis (\ref{hyp_a}).
This last estimate and (\ref{stima2ungrande}) 
imply that $u_n$ is uniformly bounded in $W^{1,\sigma}_0(\Omega)$.
Since $\sigma>1$, 
there exists a function $u \in W^{1,\sigma}_0(\Omega)$ such that,
up to a subsequence, $u_n\to u$ weakly in $W^{1,\sigma}_0(\Omega)$ and a.e. in $\Omega$.
\end{proof}

In the following lemma, we will assume some hypotheses on $p$. This will give, in some cases, 
some better estimates than Lemmata \ref{primolemmaaprioriestimates} and \ref{secondolemmaaprioriestimates}.
\begin{lemma}\label{lemmastessobdo}
Let $0<p<1$. Let $f \in L^m(\Omega)$, $\displaystyle r=\frac{Nm(1-p)}{N-2m}$ and $\displaystyle s=\frac{Nm(1-p)}{N-m(1+p)}$.
\begin{enumerate}
\item
If $\displaystyle m>\frac{N}{2}$,  the solutions of (\ref{pa}) are uniformly bounded in $H^1_0(\Omega) \cap L^{\infty}(\Omega)$.
Thus there exists a function $u \in H^1_0(\Omega)\cap L^{\infty}(\Omega)$ such that,
up to a subsequence, $u_n\to u$ weakly in $H^1_0(\Omega)$ and a.e. in $\Omega$.

\item
If $\displaystyle \frac{2N}{N+2-p(N-2)}\leq m<\frac{N}{2}$,  the solutions of (\ref{pa}) 
are uniformly bounded in $H^1_0(\Omega) \cap L^r(\Omega).$
Thus there exists a function $u \in H^1_0(\Omega)\cap L^r(\Omega)$ such that,
up to a subsequence, $u_n\to u$ weakly in $H^1_0(\Omega)$ and a.e. in $\Omega$.
\item
If $\displaystyle \frac{N}{N+1-p(N-1)}<m<\frac{2N}{N+2-p(N-2)}$,  the solutions of (\ref{pa}) 
are uniformly bounded in $W^{1,s}_0(\Omega)$.
Thus there exists a function $u \in W^{1,s}_0(\Omega)$ such that,
up to a subsequence, $u_n\to u$ weakly in $W^{1,s}_0(\Omega)$ and a.e. in $\Omega$.
\end{enumerate}
\end{lemma}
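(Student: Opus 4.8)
\emph{Proof proposal.} The plan is to reproduce, almost verbatim, the a priori estimates of \cite{bdo} for problem (\ref{without}) with $a(x,s)=b(x)(1+|s|)^{-p}$ --- which under (\ref{hyp_a}) and $0<p<1$ is exactly the class of coefficients treated there --- after checking that the singular quadratic term cannot spoil them. Two elementary observations make this work. First, since $u_n\ge 0$ one has $T_n(u_n)\le u_n$, hence
$$
\frac{b(x)}{(1+T_n(u_n))^{p}}\ \ge\ \frac{\alpha}{(1+u_n)^{p}}\qquad\text{a.e. in }\Omega ,
$$
which restores precisely the degenerate coercivity of \cite{bdo} (only the lower bound on $b$ is used in the estimates). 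Second, all the test functions employed in \cite{bdo} in these regimes are of the form $\phi(u_n)$ with $\phi:[0,+\infty)\to[0,+\infty)$ nondecreasing and $\phi(0)=0$ (the truncations $T_k$, their complements $G_k$, and the powers $(1+s)^{\delta}-1$ with $\delta>0$); for such $\phi$, $\phi(u_n)\in H^1_0(\Omega)\cap L^\infty(\Omega)$ is a legitimate nonnegative test function in (\ref{pa}) and $\nabla u_n\cdot\nabla\phi(u_n)=\phi'(u_n)|\nabla u_n|^2\ge 0$, so the singular lower order term contributes the nonnegative quantity $B\io (u_n+\tfrac1n)^{-\theta-1}u_n|\nabla u_n|^2\,\phi(u_n)$ to the left-hand side and will simply be discarded. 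What remains,
$$
\io \frac{b(x)}{(1+T_n(u_n))^{p}}\,\phi'(u_n)|\nabla u_n|^2\ \le\ \io f\,\phi(u_n),
$$
is exactly the starting inequality of \cite{bdo}.

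From there I would argue case by case. For (1), $m>\tfrac N2$, I would run a Stampacchia argument on the super-level sets $\{u_n>k\}$ (using the coercivity above together with H\"older's inequality and $f\in L^m$, $m>\tfrac N2$) to obtain $\|u_n\|_{L^\infty(\Omega)}\le C$ independently of $n$; then $\tfrac{b}{(1+T_n(u_n))^p}\ge \alpha(1+C)^{-p}$ uniformly, and testing (\ref{pa}) with $u_n$ gives the uniform $H^1_0(\Omega)$ bound. For (2), $\tfrac{2N}{N+2-p(N-2)}\le m<\tfrac N2$, I would test with $(1+u_n)^{2\gamma+1}-1$ for the specific $\gamma\ge 0$ chosen in \cite{bdo}, so that Sobolev's inequality on the left and H\"older's inequality on the right produce a closed inequality for $\io(1+u_n)^r$ --- here $m<\tfrac N2$ is what makes the iteration close and $m\ge \tfrac{2N}{N+2-p(N-2)}$ is what forces $r\ge 2^*$ --- and then the same test function together with the resulting uniform $L^r$ bound yields the uniform $H^1_0(\Omega)$ bound. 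For (3), $\tfrac{N}{N+1-p(N-1)}<m<\tfrac{2N}{N+2-p(N-2)}$, I would argue by truncation \`a la Boccardo--Gallou\"et: from estimates for $\io|\nabla T_k(u_n)|^2$ and for the decay of $\mu(\{u_n>k\})$ one gets uniform bounds for $u_n$ and $|\nabla u_n|$ in suitable Marcinkiewicz spaces, which are then upgraded to a uniform $W^{1,s}_0(\Omega)$ bound with $s=\tfrac{Nm(1-p)}{N-m(1+p)}$. In all three cases the stated weak convergence of a subsequence (and a.e.\ convergence) then follows from reflexivity of $W^{1,q}_0(\Omega)$ for $q>1$ --- note $s>1$ precisely because $m>\tfrac{N}{N+1-p(N-1)}$ --- and the compact embedding into $L^1(\Omega)$.

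I expect the only delicate point to be case (3): one has to verify that the Marcinkiewicz-space estimates actually upgrade to a genuine $W^{1,s}_0(\Omega)$ bound, and this is exactly where the lower bound on $m$ (equivalently $s>1$) is needed and is sharp. Everything else is a routine transcription of \cite{bdo}; the genuinely new ingredient is merely the sign observation that lets one throw away the singular quadratic term in every estimate.
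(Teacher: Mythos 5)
Your proposal is correct and follows essentially the same route as the paper, whose proof consists precisely in taking the test functions of \cite{bdo}, noting that the singular lower order term is then non-negative and can be dropped, and repeating the estimates of \cite{bdo} for the remaining operator term. Your write-up merely fills in the details (the sign observation, the coercivity bound, and the case-by-case transcription) that the paper leaves implicit by reference.
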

\begin{proof}
In problems (\ref{pa}) consider as a test function the same test functions as in \cite{bdo}.
With this choice, the lower order term is non-negative and we can take into account only the term given by the operator.
Therefore one can follow the same proofs as in \cite{bdo} to get the above estimates.
\end{proof}

\begin{remark}
Let $p>\theta-1$. Lemmata \ref{primolemmaaprioriestimates} and \ref{secondolemmaaprioriestimates} 
give a further uniform estimate
on $u_n$ than
 Lemma
\ref{lemmastessobdo}. 
 Indeed, if one  chooses $u_n$ as a test function in (\ref{pa}), then, by hypothesis (\ref{hyp_a})
$$
 \io |\nabla u_n|^2\left[\frac{\alpha}{(1+|u_n|)^p}+\frac{B u_n^2}{(u_n+\frac 1n)^{\theta +1}}\right]\leq\io fu_n\,.
$$
If $p> \theta -1$, the lower order term has a leading role in the left hand side of the previous inequality.

\end{remark}

We are  going  to prove the a.e. convergence of the gradients of $u_n$.
We will follow the same technique as in \cite{boccardo_per_puel}. Remark that a similar technique was used for elliptic degenerate problems in \cite{abfot}.
\begin{lemma}\label{lemma_convergenza_gradienti}
Let $u_n$ be the solutions to problems (\ref{pa})
and $u$ be the function found in Lemmata
\ref{primolemmaaprioriestimates}, or \ref{secondolemmaaprioriestimates} 
or
\ref{lemmastessobdo}, according to the summability of $f$.
Up to a subsequence,
$\nabla u_n$ converges to $\nabla u$ a.e. in $\Omega$. 
\end{lemma}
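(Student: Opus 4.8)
The plan is to adapt the Boccardo--Murat--Puel exponential test function technique of \cite{boccardo_per_puel}, combined with the localization by truncation used for degenerate operators in \cite{abfot}. The extra difficulty here is that the lower order coefficient $\frac{Bu_n}{(u_n+\frac1n)^{\theta+1}}$ is not uniformly bounded: it blows up where $u_n$ is small. So, besides truncating from above to cope with the degenerate coercivity, one also has to stay away from $\{u=0\}$. Accordingly, I would fix $\delta>0$ and $k>0$ chosen so that $\mu(\{u=\delta\})=\mu(\{u=\delta+k\})=0$ (which excludes only countably many values), and work on the ``annulus'' $\{\delta<u_n<\delta+k\}$: there the operator is uniformly elliptic, since $\frac{b(x)}{(1+T_n(u_n))^p}\geq \frac{\alpha}{(1+\delta+k)^p}$, and the lower order coefficient is bounded, since $\frac{Bu_n}{(u_n+\frac1n)^{\theta+1}}\leq \frac{B}{\delta^{\theta}}$. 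Everything happening outside that annulus will be dominated through the $L^1$ estimate \eqref{primastima}.

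\emph{Localization and the exponential test function.} Taking $T_k((u_n-\delta)^+)$ as a test function in \eqref{pa} and dropping the non-negative lower order term (it has the sign of $u_n$), exactly as for \eqref{stima_un_piccolo} one gets $\int_{\{\delta<u_n<\delta+k\}}|\nabla u_n|^2\leq C(\delta,k)$ uniformly in $n$; hence $T_k((u_n-\delta)^+)$ is bounded in $H^1_0(\Omega)$ and, using $u_n\to u$ a.e.\ from Lemmata \ref{primolemmaaprioriestimates}, \ref{secondolemmaaprioriestimates}, \ref{lemmastessobdo}, $T_k((u_n-\delta)^+)\rightharpoonup T_k((u-\delta)^+)$ weakly in $H^1_0(\Omega)$. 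Set $z_n=T_k((u_n-\delta)^+)-T_k((u-\delta)^+)$, so $z_n\rightharpoonup 0$ in $H^1_0(\Omega)$, $z_n\to 0$ in $L^2(\Omega)$ and a.e., $|z_n|\leq 2k$. Let $\varphi(s)=s\,e^{\gamma s^2}$ with $\gamma=\gamma(\delta,k)$ so large that
$$
\frac{\alpha}{(1+\delta+k)^p}\,\varphi'(s)-\frac{B}{\delta^{\theta}}\,|\varphi(s)|\;\geq\;\frac{\alpha}{2(1+\delta+k)^p}\qquad\text{for }|s|\leq 2k,
$$
which is possible precisely because both constants are finite. Testing \eqref{pa} with $\varphi(z_n)\in H^1_0(\Omega)\cap L^\infty(\Omega)$ and writing $\nabla u_n=\nabla z_n+\nabla T_k((u-\delta)^+)$ on $\{\delta<u_n<\delta+k\}$, the choice of $\gamma$ lets the good part of the operator term absorb the part of the singular term that is quadratic in $\nabla z_n$, leaving
$$
\frac{\alpha}{2(1+\delta+k)^p}\int_{\{\delta<u_n<\delta+k\}}|\nabla z_n|^2\;\leq\;R_n ,
$$
where $R_n$ collects: $\io T_n(f)\varphi(z_n)$; the terms containing the fixed function $\nabla T_k((u-\delta)^+)$; the operator flux on $\{u_n\leq\delta\}\cup\{u_n\geq\delta+k\}$ (where $\nabla T_k((u_n-\delta)^+)=0$ and $z_n$ equals $-T_k((u-\delta)^+)$ or $k-T_k((u-\delta)^+)$); and the part of the singular term outside the annulus.

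\emph{Passage to the limit.} One shows $R_n\to 0$: the term $\io T_n(f)\varphi(z_n)\to 0$ by dominated convergence ($0\leq T_n(f)\leq f\in L^1(\Omega)$, $\varphi(z_n)$ bounded and $\to 0$ a.e.); the terms carrying $\nabla T_k((u-\delta)^+)$ vanish because $\nabla z_n\rightharpoonup 0$ in $L^2$ while the factor multiplying it tends to $0$ strongly in $L^2$ (dominated convergence, $\varphi(z_n)\to 0$ a.e.); and the remaining pieces are supported, for each fixed $\delta$, on sets of the form $\{u_n\leq\delta\}\cap\{u>\delta\}$ or $\{u_n\geq\delta+k\}\cap\{u<\delta+k\}$, whose measure tends to $0$ as $n\to\infty$ — here one uses the $L^1$ bound \eqref{primastima} for the singular term and the uniform $L^\sigma$ (or $L^2$) bound on $\nabla u_n$ for the flux. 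Hence $\nabla z_n\to 0$ in $L^2(\{\delta<u_n<\delta+k\})$, that is $\nabla u_n\,\chi_{\{\delta<u_n<\delta+k\}}\to\nabla u\,\chi_{\{\delta<u<\delta+k\}}$ in measure. Letting $k\to\infty$ and then $\delta\to 0$ along a diagonal subsequence, and using that $\nabla u_n=0$ a.e.\ on $\{u_n=0\}$, $\nabla u=0$ a.e.\ on $\{u=0\}$, that $\mu(\{u_n>\delta\}\,\triangle\,\{u>\delta\})\to 0$, and the uniform smallness $\int_{\{u_n\leq\delta\}}|\nabla u_n|^2\leq C\delta$ (obtained by testing with $T_\delta(u_n)$), one gets $\nabla u_n\to\nabla u$ in measure on $\Omega$, hence $\nabla u_n\to\nabla u$ a.e.\ in $\Omega$ up to a further subsequence.

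\emph{Main obstacle.} The heart of the matter is the twofold degeneracy — loss of ellipticity of the operator where $u_n$ is large, blow-up of the lower order coefficient where $u_n$ is small — so that neither a plain truncation à la \cite{abfot} nor the plain Boccardo--Murat--Puel scheme works on its own; confining the argument to the annuli $\{\delta<u_n<\delta+k\}$ and absorbing everything outside them via \eqref{primastima} is what neutralizes both obstructions. The most delicate estimate in practice is showing that the singular term is negligibly small, in the limit $n\to\infty$ and then $\delta\to 0$, over the region where $u_n$ is small but $u$ is not.
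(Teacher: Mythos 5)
Your overall strategy (the Boccardo--Murat--Puel exponential absorption, localized to the annuli $\{\delta<u_n<\delta+k\}$) is a classical one, but as written it has a genuine gap, and it sits exactly at the point you yourself flag as ``the most delicate estimate''. The piece you cannot control is the singular term over $\{u_n\le\delta\}\cap\{u>\delta\}$: there $z_n=-T_k((u-\delta)^+)$, so $\varphi(z_n)$ is negative and of size up to $k\,e^{\gamma k^2}$ --- it is not small. Moving this piece to the right-hand side leaves you with
$$
k\,e^{\gamma k^2}\,B\int\limits_{\{u_n\le\delta\}\cap\{u>\delta\}}\frac{u_n|\nabla u_n|^2}{(u_n+\frac1n)^{\theta+1}}\,,
$$
and the only information available at this stage is the uniform $L^1$ bound (\ref{primastima}) together with $\mu(\{u_n\le\delta\}\cap\{u>\delta\})\to0$. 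A uniform $L^1$ bound on a sequence of non-negative functions plus vanishing measure of the domain of integration does not force the integrals to vanish: one needs equi-integrability of $\frac{u_n|\nabla u_n|^2}{(u_n+1/n)^{\theta+1}}$, which in this paper is established only much later (and only for $1\le\theta<2$ with $f$ locally bounded away from zero, via Proposition \ref{proposition_ko} and Lemma \ref{ultimolemma}); in the case $0<\theta<1$ with a merely positive $f$ it is never available before this lemma. A second, smaller problem arises in the infinite-energy case: on $\{u_n\ge\delta+k\}$ your flux remainder pairs $\nabla u_n$, bounded only in $L^\sigma$ with $\sigma<2$, against the fixed function $\nabla T_k((u-\delta)^+)$, which is only in $L^2$; H\"older does not close, and ``small measure'' does not help since the product is not even obviously bounded in $L^1$ uniformly in $n$.

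The paper's proof avoids both difficulties by taking a test function that is small in $L^\infty$ rather than of exponential size: it uses $T_h(u_n-T_k(u))$, so the entire lower order term contributes at most $h\io f/B$ by (\ref{primastima}), regardless of where $u_n$ is small, and the gradient of the test function is supported in $\{u_n\le h+k\}$, where $\nabla u_n=\nabla T_{h+k}(u_n)$ is uniformly bounded in $L^2$. This yields $\limsup_n\int_{\{|u_n-T_k(u)|\le h\}}|\nabla T_h(u_n-T_k(u))|^2\le Ch(1+h+k)^p$, and the conclusion follows from the three-set decomposition of $\io|\nabla(u_n-u)|^q$ (which you essentially reproduce), letting $n\to\infty$, then $h\to0$, then $k\to\infty$; no exponential is needed. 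If you wish to salvage your scheme you would have to make the test function vanish on $\{u_n\le\delta\}$ (e.g.\ by multiplying by $T_\delta(u_n)/\delta$), but this reintroduces error terms of exactly the type the $T_h$ device disposes of for free.
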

\begin{proof}
Let $h,k>0$. In the sequel $C$ will denote a constant independent of $n, h, k$.
Let us consider  $T_h(u_n-T_k(u))$ as a test function in problems (\ref{pa}).
Then
$$
%B\int_{\{u_n\leq k\}} \frac{|\nabla u_n|^2 u_n}{(u_n +\frac 1n)^{\theta +1}}T_h(u_n-T_k(u))
%+
\io \frac{b(x)}{(1+T_n(u_n))^p}{\nabla u_n\cdot \nabla T_h(u_n-T_k(u))}
\leq
h\io f+
B\io \frac{|\nabla u_n|^2 u_n}{(u_n +\frac 1n)^{\theta +1}}h\,.
$$
By estimate
(\ref{primastima}) on the right hand side and by hypothesis (\ref{hyp_a})  on the left one, 
we get
$$
\io \frac{\nabla u_n\cdot \nabla T_h(u_n-T_k(u))}{(1+T_n(u_n))^{p}}
\leq
Ch\,.
$$
Then we can write
$$
\int\limits_{\{|u_n-T_k(u))|\leq h\}} \frac{|\nabla (u_n-T_k(u))|^2}{(1+u_n)^{p}}
\leq
\io \frac{\nabla (u_n-T_k(u))\cdot\nabla T_h(u_n-T_k(u))}{(1+T_n(u_n))^{p}}
$$
$$
\leq 
Ch-\io \frac{\nabla T_k(u)\cdot\nabla T_h(u_n-T_k(u))}{(1+T_n(u_n))^{p}}\,.
$$
At the limit as $n\to \infty$ one has
$$
\limsup_{n\to \infty}  \int\limits_{\{|u_n-T_k(u)|\leq h\}}\frac{|\nabla T_h(u_n-T_k(u))|^2}{(1+u_n)^p}
\leq Ch\,.
$$
Since $u_n\leq h+k$ in $\{|u_n-T_k(u)|\leq h\}$, we get
\begin{equation}\label{perconvergenzagradienti}
\limsup_{n\to \infty}  \int\limits_{\{|u_n-T_k(u)|\leq h\}}{|\nabla T_h(u_n-T_k(u))|^2}
\leq Ch(1+h+k)^p\,.
\end{equation}
We recall that
$u_n$ is uniformly bounded in $W^{1,\eta}_0(\Omega)$, where $\eta$ equals 2 or $\sigma$ or $s$, according 
to the statements of 
Lemmata \ref{primolemmaaprioriestimates},
\ref{secondolemmaaprioriestimates} 
and
\ref{lemmastessobdo}.
Let $q\in (1,\eta)$. 
We can write
$$
\io|\nabla (u_n-u)|^q=
$$
$$
\int\limits_{\{|u_n-u|\leq h,|u|\leq k\}}|\nabla (u_n-u)|^q+
\int\limits_{\{|u_n-u|\leq h,|u|> k\}}|\nabla (u_n-u)|^q+
\int\limits_{\{|u_n-u|> h\}}|\nabla (u_n-u)|^q\,.
$$
Using the H\"older inequality with exponent $\frac 2q$ 
on the first term of the right hand side and exponent $\frac{\eta}{q}$ on the other ones, 
we have 
$$
\io|\nabla (u_n-u)|^q
\leq
$$
$$
\leq
C\left[\int\limits_{\{|u_n-u|\leq h,|u|\leq k\}}|\nabla (u_n-u)|^2\right]^{\frac q2}
+
C
\left[\mu({\{|u|> k\}})^{1-\frac{q}{\eta}} + \mu({\{|u_n-u|> h\}})^{1-\frac{q}{\eta}}\right]
\,,
$$
where we have used that $u_n$ is uniformly bounded in $W^{1,\eta}_0(\Omega)$ to estimate the last two terms. 
By (\ref{perconvergenzagradienti}) the limit as $n\to \infty$ gives
$$
\limsup_{n\to \infty}\io |\nabla (u_n-u)|^q
\leq 
[Ch(1+k+h)^p]^{\frac q2}+
C \mu({\{|u|> k\}})^{1-\frac{q}{\eta}}\,.
$$
The limit as $h\to 0$ implies
$$
\limsup_{n\to \infty}\io |\nabla (u_n-u)|^q
\leq 
C \mu({\{|u|> k\}})^{1-\frac{q}{\eta}}\,.
$$
At the limit as $k\to +\infty$,
$\mu({\{|u|> k\}})$ converges to $0$. Therefore $\nabla u_n\to \nabla u$ in $L^q(\Omega)$. Up to a subsequence, $\nabla u_n\to \nabla u$ a.e. in $\Omega$.
\end{proof}

\section{Existence results in the case $0<\theta<1$}
To prove the existence of solutions
to problem (\ref{pb}),
the key point is to prove that the function $u$ found 
by compactness in the lemmata of Section \ref{aprioriestimates}
is strictly positive.
In the case $0<\theta<1$, we use a  technique similar to that  in \cite{boccardo_per_puel}.
\begin{proposition}\label{funzionepositiva}
Let $0<\theta<1$. Let $u_n$ and
$u$ be
as in Lemma
\ref{lemma_convergenza_gradienti}. Then $u>0$.
\end{proposition}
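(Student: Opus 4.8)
The goal is to show that the function $u$ provided by Lemma \ref{lemma_convergenza_gradienti} is strictly positive a.e.\ in $\Omega$. My plan is to prove that, for every $\omega\subset\subset\Omega$, the functions $\log\!\big(u_n+\tfrac1n\big)$ are bounded in $H^1(\omega)$ uniformly in $n$, and then to pass to the limit along the subsequence for which $u_n\to u$ a.e. The reason for working with the logarithm is that, once the lower order term has been regularized as in (\ref{pa}), it is essentially comparable with $|\nabla\log(u_n+\tfrac1n)|^2$, and the hypothesis $\theta<1$ is precisely what makes this comparison go in the favourable direction.

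Fix $\omega\subset\subset\Omega$, a cut-off $\zeta\in C^1_0(\Omega)$ with $0\le\zeta\le1$ and $\zeta\equiv1$ on $\omega$, and a small $k\in(0,1)$. I would use $\varphi=\zeta^2\big[(u_n+\tfrac1n)^{-1}-k^{-1}\big]^{+}$ as a test function in (\ref{pa}); set $v_n=u_n+\tfrac1n$. This $\varphi$ is nonnegative, belongs to $H^1_0(\Omega)\cap L^\infty(\Omega)$, and is supported in the set $\{u_n<k\}$, where the degenerate coercivity is harmless since there $(1+u_n)^{-p}\ge(1+k)^{-p}$. The principal part then yields the good negative term $-\int_{\{v_n<k\}}\zeta^2\,b\,(1+u_n)^{-p}|\nabla\log v_n|^2$, which by (\ref{hyp_a}) dominates, up to the factor $\alpha(1+k)^{-p}$, the quantity $\widetilde Z:=\int_{\{v_n<k\}}\zeta^2|\nabla\log v_n|^2$; the right-hand side $\io T_n(f)\varphi$ is nonnegative and is discarded; the lower order term is nonnegative and is kept, and on its support $\frac{u_n}{v_n^{\theta+1}}\,v_n^{-1}\le v_n^{-\theta-1}=v_n^{1-\theta}\,v_n^{-2}\le k^{1-\theta}\,v_n^{-2}$, so it contributes at most $B\,k^{1-\theta}\widetilde Z$; finally the term containing $\nabla\zeta$ is absorbed by Cauchy--Schwarz against $\widetilde Z^{1/2}$ and $\int|\nabla\zeta|^2$. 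Choosing $k$ small so that $B\,k^{1-\theta}$ is absorbed by $\alpha(1+k)^{-p}$ — possible exactly because $\theta<1$, so that $k^{1-\theta}\to0$ as $k\to0$ — one obtains $\widetilde Z\le C$ with $C$ independent of $n$. On $\{v_n\ge k\}$ one has $|\nabla\log v_n|\le k^{-1}|\nabla u_n|$, and $\int_{\{u_n\ge k/2\}}|\nabla u_n|^2$ is bounded uniformly in $n$ by (\ref{primastima}) (there $\frac{u_n}{v_n^{\theta+1}}$ is bounded below by a constant depending only on $k,\theta$); hence $\int_\omega|\nabla\log v_n|^2\le C$ uniformly in $n$.

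To pass to the limit I would use the a priori estimates of Lemmata \ref{primolemmaaprioriestimates}--\ref{secondolemmaaprioriestimates}, which give $v_n$ bounded in some $L^{q_0}(\Omega)$ with $q_0>1$. Together with the uniform gradient bound and the Poincar\'e inequality, $\log v_n$ minus its mean over $\omega$ is bounded in $H^1(\omega)$; comparing the a.e.\ limit $\log u$ with the behaviour of these means, and using $v_n\to u$ a.e.\ together with the $L^{q_0}$ bound, one sees that for each such $\omega$ either $u>0$ a.e.\ on $\omega$ or $u\equiv0$ a.e.\ on $\omega$. This produces a dichotomy on the connected components of $\Omega$; on a component where $u\equiv0$ one would have $u_n\to0$ and $\nabla u_n\to0$ (Lemma \ref{lemma_convergenza_gradienti}), which, by testing (\ref{pa}) against a fixed nonnegative test function and using (\ref{primastima}), can be shown to be incompatible with $f$ being positive. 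Hence $u>0$ a.e.\ in $\Omega$.

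The crux — and the main obstacle — is the first step: finding a test function for which the \emph{singular} lower order term helps rather than hurts. This is exactly where $0<\theta<1$ is used in an essential way (the decay $k^{1-\theta}\to0$), and it is exactly what breaks down for $1\le\theta<2$, where a more delicate argument together with the stronger assumption that $f$ has positive essential infimum on every $\omega\subset\subset\Omega$ becomes necessary (and for $\theta\ge2$, where one in fact has nonexistence). A secondary technical point is the passage to the limit in $\log v_n$, which requires exploiting the a priori integrability of $u_n$ and the nontriviality of $u$ to control the mean values and exclude divergence to $-\infty$ in the mean.
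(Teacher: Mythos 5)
Your route is genuinely different from the paper's. The paper tests (\ref{pa}) with $e^{-BH_n(u_n)}\phi$, where $H_n'$ is chosen so that the singular quadratic term is exactly absorbed by the derivative of the exponential; after the further substitution $P_n(u_n)$ this exhibits $u_n$ as a supersolution of the \emph{linear} problem $-\div(b(x)\nabla w)=T_1(f)e^{-BH_n(u_n)}$, and positivity follows from the comparison principle and the strong maximum principle. The finiteness of $H(s)=\int_0^s(1+t)^p/(\alpha t^{\theta})\,dt$ near $0$ is where $\theta<1$ enters there. Your first step --- the Caccioppoli-type estimate for $\log v_n$, $v_n=u_n+\frac1n$, obtained by testing with $\zeta^2[v_n^{-1}-k^{-1}]^+$ and absorbing the lower order term thanks to the factor $k^{1-\theta}$ --- is correct and is an attractive alternative use of $\theta<1$. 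One slip there: on $\{u_n\ge k/2\}$ the weight $u_n/v_n^{\theta+1}$ is \emph{not} bounded below (it decays like $u_n^{-\theta}$ as $u_n\to\infty$), so (\ref{primastima}) does not bound $\int_{\{u_n\ge k/2\}}|\nabla u_n|^2$; what it does bound, writing $u_n/v_n^{\theta+1}=u_nv_n^{1-\theta}/v_n^2\ge c_k/v_n^2$ on $\{v_n\ge k\}$ (again using $\theta<1$), is $\int_{\{v_n\ge k\}}|\nabla\log v_n|^2$, which is all you need. This matters in the regime of Lemma \ref{secondolemmaaprioriestimates}, where $u_n$ is only bounded in $W^{1,\sigma}_0(\Omega)$ with $\sigma<2$.

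The genuine gap is in the last step. Your dichotomy reduces everything to excluding $u\equiv0$ on a connected component, and you propose to do this by testing (\ref{pa}) with a fixed $\phi\ge0$ and letting $n\to\infty$. The operator term does tend to $0$ there, so you need the lower order term $B\io\frac{u_n|\nabla u_n|^2}{(u_n+\frac1n)^{\theta+1}}\phi$ to stay strictly below $\io f\phi$. But $u_n\to0$ and $\nabla u_n\to0$ a.e.\ do not force this integrand to $0$ even pointwise (take $u_n\sim\frac1n$ and $|\nabla u_n|^2\sim n^{\theta-1}$: the quotient behaves like $n^{2\theta-1}$, which blows up for $\theta>\frac12$), and (\ref{primastima}) only gives a uniform $L^1$ bound, so this term can a priori concentrate and carry all of $\io f\phi$. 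Ruling that out requires an equi-integrability or no-concentration argument for the singular quadratic term --- precisely the delicate point that the paper circumvents via the exponential substitution for $\theta<1$, and handles through Lemma \ref{ultimolemma} together with Proposition \ref{proposition_ko} for $\theta\ge1$. As written, ``can be shown to be incompatible with $f$ being positive'' is an assertion, not a proof, and the missing argument is the heart of the matter.
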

\begin{proof}
We define, for $s\geq 0$, 
$$
{H_n(s)}= \int_0^s\frac{t (1+T_n(t))^p}{\alpha(t+\frac 1n)^{\theta+1}}dt
\,,\,\,\,\,\,\,{H(s)}= \int_0^s\frac{(1+t)^p}{\alpha t^{\theta}}dt\,.
$$
Observe that $H$ is well-defined, since $\theta<1$.
We choose  $e^{-BH_n(u_n)}\phi$, where $\phi$ is a positive $C^{\infty}_0(\Omega)$ function, as a test function in
 (\ref{pa}). This gives
$$
\io \frac{b(x)}{(1+T_n(u_n))^p}{\nabla u_n\cdot \nabla \phi\, e^{-BH_n(u_n)}}-\io T_n(f) e^{-BH_n(u_n)}\phi=
$$
$$
=B\io \frac{b(x)}{(1+T_n(u_n))^p}{e^{-BH_n(u_n)} |\nabla u_n|^2}\phi H_n'(u_n)-
B\io\frac{e^{-BH_n(u_n)} |\nabla u_n|^2 u_n}{(\frac 1n+u_n)^{\theta + 1}}\phi
$$
$$
\geq B\io \frac{\alpha}{(1+T_n(u_n))^p}{e^{-BH_n(u_n)} |\nabla u_n|^2}\phi H_n'(u_n)-
B\io\frac{e^{-BH_n(u_n)} |\nabla u_n|^2 u_n}{(\frac 1n+u_n)^{\theta + 1}}\phi
$$
by hypothesis (\ref{hyp_a}).
The last quantity is positive,
due to the choice of $H_n$ and $\phi$. As a consequence
$$
\io \frac{b(x)}{(1+T_n(u_n))^p}{\nabla u_n \cdot \nabla \phi\, e^{-BH_n(u_n)}}\geq \io T_n(f) e^{-BH_n(u_n)}\phi\geq \io T_1(f) e^{-BH_n(u_n)}\phi\,.
$$
Now, we set 
$$
P_n(s)=\int_0^s \frac{e^{-BH_n(t)}}{(1+T_n(t))^p}dt\,,\,\,\,\,\,\,\,P(s)=\int_0^s \frac{e^{-BH(t)}}{(1+t)^p}dt\,.
$$ 
With these definitions, we remark that we have just proved that the inequality
$$
-\div (b(x)\nabla (P_n(u_n)))\geq T_1(f)e^{-BH_n(u_n)}
$$
holds distributionally.
Observe that for every $n\in \N$, $P_n(u_n)\in H^1_0(\Omega)$, since $P_n'$ is bounded and $u_n\in H^1_0(\Omega)$.
Let $z_n$ be the $H^{1}_0(\Omega)$ solution to $$-\div(b(x)\nabla z_n)=T_1(f)e^{-BH_n(u_n)}\,;$$ 
let $z$ be the $H^{1}_0(\Omega)$ solution to $$-\div(b(x)\nabla z)=T_1(f)e^{-BH(u)}\,.$$
Then
$$
-\div (b(x)\nabla (P_n(u_n)))\geq -\div (b(x)\nabla z_n)\,.
$$
The comparison principle in $H^1_0(\Omega)$ implies that $P_n(u_n(x))\geq z_n(x)$ for a.e. $x\in \Omega$.
Up to a subsequence,
$z_n\to z$ weakly in $H^1_0(\Omega)$
and  a.e. in $\Omega$.
At the limit a.e. in $\Omega$, as $n\to +\infty$, we have $P(u)\geq z$.
By the strong maximum principle  $z>0$ and so $P(u)>0$.
Since $P$ is strictly increasing, $u>0$ in $\Omega$.
\end{proof}
\begin{corollary}\label{primo_corollario}
Let $0<\theta<1$. Let $u_n$ and
$u$ be
as in Lemma
\ref{lemma_convergenza_gradienti}. Then $\displaystyle \frac{|\nabla u|^2}{u^{\theta}} \in L^1(\Omega)$.
\end{corollary}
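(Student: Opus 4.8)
The plan is to deduce the conclusion by letting $n\to\infty$ in the uniform bound (\ref{primastima}). All the convergence facts needed are already available: along a common subsequence, Lemmata \ref{primolemmaaprioriestimates} and \ref{secondolemmaaprioriestimates} give $u_n\to u$ a.e.\ in $\Omega$, Lemma \ref{lemma_convergenza_gradienti} gives $\nabla u_n\to\nabla u$ a.e.\ in $\Omega$, and Proposition \ref{funzionepositiva} gives $u>0$ in $\Omega$. Consequently, for a.e.\ $x\in\Omega$ one has $u_n(x)+\frac1n\to u(x)>0$, whence
$$
\frac{u_n|\nabla u_n|^2}{\bigl(u_n+\frac1n\bigr)^{\theta+1}}\ \longrightarrow\ \frac{u\,|\nabla u|^2}{u^{\theta+1}}=\frac{|\nabla u|^2}{u^{\theta}}\qquad\mbox{a.e.\ in }\Omega .
$$

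Since all these integrands are non-negative, I would then apply Fatou's lemma and invoke (\ref{primastima}):
$$
B\io\frac{|\nabla u|^2}{u^{\theta}}\ \le\ \liminf_{n\to\infty}\;B\io\frac{u_n|\nabla u_n|^2}{\bigl(u_n+\frac1n\bigr)^{\theta+1}}\ \le\ \io f\ <\ +\infty .
$$
Dividing by $B>0$ yields $\frac{|\nabla u|^2}{u^{\theta}}\in L^1(\Omega)$.

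The only delicate point — and the reason this statement is placed immediately after Proposition \ref{funzionepositiva} — is the identification of the a.e.\ limit of the integrand: it is the strict positivity $u>0$ that prevents the factor $\bigl(u_n+\frac1n\bigr)^{-(\theta+1)}$ from blowing up on a set of positive measure and allows us to recognise the limit as $\frac{|\nabla u|^2}{u^{\theta}}$. Once that is granted, no further obstacle arises; in particular the restriction $\theta<1$ plays no additional role here beyond what was already used in the earlier lemmata and in defining $H$.
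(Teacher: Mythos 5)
Your argument is correct and is exactly the paper's proof: pass to the limit in the uniform bound (\ref{primastima}) via Fatou's lemma, using the a.e.\ convergences from Lemmata \ref{primolemmaaprioriestimates}, \ref{secondolemmaaprioriestimates} and \ref{lemma_convergenza_gradienti} together with the strict positivity $u>0$ from Proposition \ref{funzionepositiva} to identify the pointwise limit of the integrand. Nothing further is needed.
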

\begin{proof}
We pass to the limit  in  (\ref{primastima}).
The a.e. convergence of $u_n$ to $u$ (see Lemmata \ref{primolemmaaprioriestimates},
\ref{secondolemmaaprioriestimates} 
and
\ref{lemmastessobdo}), the a.e. convergence of $\nabla u_n$ to $\nabla u$ (see Lemma \ref{lemma_convergenza_gradienti})
and Proposition \ref{funzionepositiva}
 imply
$$
B\io \frac{|\nabla u|^2}{u^\theta}\leq \io f
$$
by Fatou's lemma.
\end{proof}
We are going to prove Theorem \ref{teorema_energia_finita}. 
\begin{proof}
We are going to prove that the function
$u$ found in Lemma \ref{primolemmaaprioriestimates}, and studied in Lemma \ref{lemma_convergenza_gradienti},
Proposition \ref{funzionepositiva} and Corollary \ref{primo_corollario},  is a weak solution to problem (\ref{pb}). We use the same technique as in \cite{boccardo_per_puel}.

We will prove that (\ref{formulazionedebole1}) holds true for every  positive and bounded $\varphi\in H^1_0(\Omega)$. 
The general case follows from the fact that  every such function
$\varphi$ can  be written as $\varphi_+-\varphi_-$ with 
$\varphi_{\pm}$ bounded, positive and belonging to $H^1_0(\Omega)$.

We pass to the limit as $n\to \infty$ in 
$$
\io \frac{b(x)}{(1+T_n(u_n))^p}\nabla u_n\cdot\nabla \varphi
+B\io \frac{|\nabla u_n|^2 u_n}{(u_n+\frac 1n)^{1+\theta}}\varphi=
\io T_n(f)\varphi\,,
$$
where $\varphi$ is a positive bounded $H^1_0(\Omega)$ function.
Regarding the first term we observe that
$\displaystyle \frac{b(x)}{(1+T_n(u_n))^p}{\nabla \varphi}$ strongly converges to $\displaystyle \frac{b(x)}{(1+u)^p}{\nabla \varphi}$ 
in $L^2(\Omega)$ 
and
$\nabla u_n$ weakly converges to $\nabla u$  in $L^2(\Omega)$.
For the second one we use the a.e. convergence  of $\nabla u_n$, proved in Lemma \ref{lemma_convergenza_gradienti}. 
Fatou's lemma
implies
\begin{equation}\label{first-inequality-first-theorem}
\io \frac{b(x)}{(1+u)^p}\nabla u\cdot\nabla \varphi
+B\io \frac{|\nabla u|^2}{u^{\theta}}\varphi\leq \io f\varphi\,.
\end{equation}
The proof of  the opposite inequality is more delicate.
To this aim, we define, for $n \in \N$ and $s\geq 0$, 
$$
H_{\frac 1n}(t)=\int_0^t\frac{B(1+s)^p}{\alpha(s+\frac 1n)^{\theta}}ds\,,\,\,\,\,\,\,\,\,
H_{0}(t)=\int_0^t\frac{B(1+s)^p}{\alpha s^{\theta}}ds\,.
$$
$H_0$ is well-posed, since $\theta < 1$.
Let us consider 
$$
v=e^{- H_{\frac 1n}(u_n)}e^{ H_{\frac 1j}(T_j(u))} \varphi\,,
$$
where $j\in \N$ and $\varphi$ is a positive bounded $H^1_0(\Omega)$ function, as a test function in (\ref{pa}). 
Then
$$
\io \frac{b(x)}{(1+T_n(u_n))^p}\nabla u_n\cdot \nabla \varphi\, e^{-H_{\frac 1n}(u_n)}e^{H_{\frac 1j}(T_j(u))}
$$
$$
+\frac{B}{\alpha}
\io \frac{b(x)}{(1+T_n(u_n))^p}{\varphi}\,e^{-H_{\frac 1n}(u_n)}
e^{H_{\frac 1j}(T_j(u))}\frac{\nabla u_n\cdot \nabla T_j(u)}{(T_j(u)+\frac 1j)^{\theta}}
(T_j(u)+1)^{p}
$$
$$
= \io T_n(f) e^{-H_{\frac 1n}(u_n)}e^{ H_{\frac 1j}(T_j(u))} \varphi
+\frac{B}{\alpha}\io \frac{b(x)|\nabla u_n|^2}{(1+T_n(u_n))^{p}}\frac{(1+u_n)^{p}}{(\frac 1n+u_n)^{\theta}}\varphi
\, e^{-H_{\frac 1n}(u_n)}e^{H_{\frac 1j}(T_j(u))}
$$
$$
-B\io 
\frac{|\nabla u_n|^2 u_n}{(\frac 1n+u_n)^{\theta+1}}
e^{-H_{\frac 1n}(u_n)}e^{H_{\frac 1j}(T_j(u))}\varphi\,.
$$
Note that by hypothesis (\ref{hyp_a}) and inequality
$$
\left(\frac{u_n+1}{1+T_n(u_n)}\right)^p \geq 1>\frac{u_n}{u_n+\frac{1}{n}}\,,
$$
 the sum of the  last two terms is non-negative.
At the limit as $n\to \infty$ we have
$$
\io \frac{b(x)}{(1+u)^p}\nabla u\cdot \nabla \varphi\, e^{- H_{0}(u)}e^{ H_{\frac 1j}(T_j(u))}
$$
$$
+\frac{B}{\alpha}
\io \frac{b(x)}{(1+u)^p}\varphi\,e^{-H_{0}(u)}
e^{H_{\frac 1j}(T_j(u))}\frac{\nabla u\cdot \nabla T_j(u)}{(T_j(u)+\frac 1j)^{\theta}}(T_j(u)+1)^{p}
$$
$$
\geq \io f e^{-H_{0}(u)}e^{H_{\frac 1j}(T_j(u))} \varphi
+\frac{B}{\alpha}\io \frac{b(x)|\nabla u|^2}{u^{\theta}}\varphi
\, e^{-H_{0}(u)}e^{H_{\frac 1j}(T_j(u))}
$$
$$
-B\io 
\frac{|\nabla u|^2}{u^{\theta}}
e^{-H_{0}(u)}e^{H_{\frac 1j}(T_j(u))}\varphi
\,,
$$
using the weak convergence of $u_n$ to $u$ in $H^1_0(\Omega)$
in the left  hand side and Fatou's lemma in the right one.
Now we pass to the limit as  $j\to \infty$, using that 
$
e^{- H_{0}(u)}e^{ H_{\frac 1j}(T_j(u))}\leq 1
$
and Corollary \ref{primo_corollario}.
We obtain
\begin{equation}\label{second-inequality-first-theorem}
\io \frac{b(x)}{(1+u)^p}\nabla u\cdot \nabla \varphi
\geq \io f \varphi
-B\io \varphi
\frac{|\nabla u|^2}{u^{\theta}}
\,.
\end{equation}
Inequalities (\ref{first-inequality-first-theorem}) 
and (\ref{second-inequality-first-theorem}) imply that 
$$
\io \frac{b(x)}{(1+u)^p}\nabla u\cdot \nabla \varphi
+B\io \varphi
\frac{|\nabla u|^2}{u^{\theta}}
= \io f \varphi
$$
for every  positive and bounded $\varphi\in H^1_0(\Omega)$.
\end{proof}
We are going to prove Theorem \ref{teorema_energia_infinita}.
\begin{proof}
We are going to prove that the function $u$ found in Lemma \ref{secondolemmaaprioriestimates}
and studied in
Lemma \ref{lemma_convergenza_gradienti}, Proposition \ref{funzionepositiva} and Corollary \ref{primo_corollario},
 is a weak solution to problem (\ref{pb}). We use the same technique as in \cite{bmp, porretta}.

We first prove (\ref{formulazionedebole}) for every positive  $C^1_0(\Omega)$ function $\varphi$. 
With the same argument as in the previous theorem (i.e., using Fatou's lemma) one can prove that 
\begin{equation}\label{first-inequality-second-theorem}
\io \frac{b(x)}{(1+u)^p}\nabla u\cdot\nabla \varphi
+B\io \frac{|\nabla u|^2}{u^{\theta}}\varphi\leq \io f\varphi\,.
\end{equation}
To prove the opposite inequality, we slightly modify the previous proof, since we no longer have uniform estimates of $u_n$ in $H^1_0(\Omega)$.
Observe that, however, $T_k(u_n)$ is uniformly bounded in $H^1_0(\Omega)$. Indeed, it is sufficient to consider
$T_k(u_n)$ as a test function in (\ref{pa}): we obtain
$$
\int\limits_{\{u_n\leq k\}}|\nabla T_k(u_n)|^2\leq Ck(1+k)^p \,\,\,\,\,\,\,\forall\, n\in \N
$$
by hypothesis (\ref{hyp_a}).
 We will  use, for $k \in \N$ and $s\in \R$ 
$$
R_k(s)=\left\{
\begin{array}{ll}
1, & s\leq k
\\
k+1-s, & k\leq s\leq k+1
\\
0, & s>k+1\,,
\end{array}
\right.
$$
 to define a test function.
We set, for $t\geq 0$,
$$
H_{\frac 1n}(t)=\int_0^t\frac{B(1+s)^p}{\alpha(s+\frac 1n)^{\theta}}ds\,,\,\,\,\,\,\,\,\,
H_{0}(t)=\int_0^t\frac{B(1+s)^p}{\alpha s^{\theta}}ds\,.
$$
This is possible, since $\theta < 1$.
We consider 
$$
v=e^{- H_{\frac 1n}(u_n)}e^{ H_{\frac 1j}(T_j(u))} R_k(u_n)\varphi\,,
$$ 
where
 $\varphi$ is a positive $C^1_0(\Omega)$ function and $j\in N$, as a test function 
in (\ref{pa}).
Then
$$
\io \frac{b(x)}{(1+T_n(u_n))^p}\nabla u_n\cdot \nabla \varphi\, 
e^{-H_{\frac 1n}(u_n)}e^{H_{\frac 1j}(T_j(u))}R_k(u_n)
$$
$$
+\frac{B}{\alpha}
\io \frac{b(x)}{(1+T_n(u_n))^p}{\varphi}\,e^{-H_{\frac 1n}(u_n)}
e^{H_{\frac 1j}(T_j(u))}\frac{\nabla u_n\cdot \nabla T_j(u)}{(T_j(u)+\frac 1j)^{\theta}}
(T_j(u)+1)^{p}R_k(u_n)
$$
$$
=\io T_n(f) e^{-H_{\frac 1n}(u_n)}e^{H_{\frac 1j}(T_j(u))} \varphi R_k(u_n)+
\int\limits_{\{k\leq u_n\leq k+1\}} \frac{b(x)|\nabla u_n|^2}{(1+T_n(u_n))^{p}}\varphi
\, e^{-H_{\frac 1n}(u_n)}e^{H_{\frac 1j}(T_j(u))}
$$
$$
+\frac{B}{\alpha}\io \frac{b(x)|\nabla u_n|^2}{(1+T_n(u_n))^{p}}\frac{(1+u_n)^{p}}{(\frac 1n+u_n)^{\theta}}\varphi
\, e^{-H_{\frac 1n}(u_n)}e^{H_{\frac 1j}(T_j(u))}R_k(u_n)
$$
$$
-B\io 
\frac{|\nabla u_n|^2 u_n}{(\frac 1n+u_n)^{\theta+1}}
 \,e^{-H_{\frac 1n}(u_n)}e^{H_{\frac 1j}(T_j(u))}R_k(u_n)\varphi\,.
$$
The sum of the last two terms is positive, since $b(x)\geq \alpha$ by  hypothesis (\ref{hyp_a})
and by inequality
$$
\left(\frac{u_n+1}{1+T_n(u_n)}\right)^p \geq 1\geq \frac{u_n}{u_n+\frac{1}{n}}\,.
$$
Dropping the non-negative term $\displaystyle\int\limits_{\{k\leq u_n\leq k+1\}} \frac{b(x)|\nabla u_n|^2}{(1+T_n(u_n))^{p}}\varphi
\, e^{-H_{\frac 1n}(u_n)}e^{H_{\frac 1j}(T_j(u_j))}$,
at the limit as $n\to \infty$ we have, by Fatou's lemma, the weak convergence of $u_n$ in $W^{1,\sigma}_0(\Omega)$ and the weak convergence of
$T_k(u_n)$ in $H^1_0(\Omega)$,
$$
\io \frac{b(x)}{(1+u)^p}\nabla u\cdot \nabla \varphi\, e^{-H_{0}(u)}e^{H_{\frac 1j}(T_j(u))}R_k(u)+
$$
$$
+\frac{B}{\alpha}
\io \frac{b(x)}{(1+u)^p}{\varphi}\,e^{-H_{0}(u)}
e^{H_{\frac 1j}(T_j(u))}\frac{\nabla u\cdot \nabla T_j(u)}{(T_j(u)+\frac 1j)^{\theta}}
(T_j(u)+1)^{p}R_k(u)
$$
$$
\geq \io f e^{-H_{0}(u)}e^{H_{\frac 1j}(T_j(u))} \varphi\,R_k(u)+
\frac{B}{\alpha}\io \frac{b(x)|\nabla u|^2}{u^{\theta}}\varphi
\, e^{-H_{0}(u)}e^{H_{\frac 1j}(T_j(u))}R_k(u)
$$
$$
-B\io 
\frac{|\nabla u|^2}{u^{\theta}}
\,e^{-H_{0}(u)}e^{H_{\frac 1j}(T_j(u))}R_k(u)\varphi \,.
$$
As in the previous proof, it is now sufficient to pass to the limit as $j\to \infty$ first, using that 
$
e^{-H_{0}(u)}e^{H_{\frac 1j}(T_j(u))}\leq 1
$ and Corollary \ref{primo_corollario}, and then to the limit as $k\to \infty$, using that $R_k(u)$ tends to 1. 
We thus obtain
\begin{equation}\label{second-inequality-second-theorem}
\io \frac{b(x)}{(1+u)^p}\nabla u\cdot\nabla \varphi
\geq \io f\varphi - B\io \frac{|\nabla u|^2}{u^{\theta}}\varphi\,.
\end{equation}
Inequalities (\ref{first-inequality-second-theorem}) and (\ref{second-inequality-second-theorem}) imply that 
\begin{equation}\label{last-inequality-second-theorem}
\io \frac{b(x)}{(1+u)^p}\nabla u\cdot\nabla \varphi
+ B\io \frac{|\nabla u|^2}{u^{\theta}}\varphi
=\io f\varphi 
\end{equation}
for every  positive $\varphi\in C^1_0(\Omega)$.
Now, let $\varphi$ any $C^1_0(\Omega)$ function. We define  $\varphi^{\varepsilon}_{\pm}=\rho^{\varepsilon}*\varphi_{\pm}$ as the convolution of 
 a mollifier $\rho^{\varepsilon}$, for $\varepsilon>0$, with  $\varphi_{\pm}$.
Then $\varphi^{\varepsilon}_{\pm}$ is a positive $C^1_0(\Omega)$ function, for $\varepsilon$ sufficiently small. By (\ref{last-inequality-second-theorem}) we have
$$
\io \frac{b(x)}{(1+u)^p}\nabla u\cdot\nabla (\varphi^{\varepsilon}_{-}-\varphi^{\varepsilon}_{+})
+ B\io \frac{|\nabla u|^2}{u^{\theta}}(\varphi^{\varepsilon}_{-}-\varphi^{\varepsilon}_{-})
=\io f(\varphi^{\varepsilon}_{-}-\varphi^{\varepsilon}_{-})\,. 
$$
Since $\varphi^{\varepsilon}_{-}-\varphi^{\varepsilon}_{-}\to \varphi$ uniformly in $\Omega$ and in $W^{1,q}_0(\Omega)$ for every $q\geq 1$, as $\varepsilon \to 0$, the result follows.
\end{proof}

%%%%%%%%%%%%%%%%%%%%%%%%%%%%%%%%%%%%%%%%%%%%%%%%%%%%%%%%%%
%%%%%%%%%%%%%%%%%%%%%%%%%%%%%%%%%%%%%%%%%%%%%%%%%%%%%%%%%%%
%%%%%%%%%%%%%%%%%%%%%%%%%%%%%%%%%%%%%%%%%%%%%%%%%%%%%%%%%%
%%%%%%%%%%%%%%%%%%%%%%%%%%%%%%%%%%%%%%%%%%%%%%%%%%%%%%%%%
%%%%%%%%%%%%%%%%%%%%%%%%%%%%%%%%%%%%%%%%%%%%%%%%%%%%%%%%%

\section{Existence results in the case  $1\leq \theta<2$}
As in the above case, we need to prove that
the function $u$ found in Section \ref{aprioriestimates} is not $0$ in $\Omega$.
To this aim,
we are going to prove that for every $\omega\subset\subset \Omega$
there exists a positive constant $c_{\omega}$ such that
the solutions $u_n$ to problems (\ref{pa})
satisfy $u_n\geq c_{\omega}$ in $\omega$ for every $n\in \N$. 
  We will follow a similar technique to that one in \cite{luigietaltri}.
The following theorem, 
proved in \cite{pellacci} (and in \cite{luigietaltri}), will be useful to us.

\begin{theorem}\label{K-O}
Let $B: \Omega \times \R\to \R$ be a Carath\'eodory function
such that for every $\omega \subset \subset \Omega$ there exists $m_{\omega}>0$ such that 
$B(x,s)\geq m_{\omega}l(s)$ for a.e. $x\in \Omega$ and for every $s\geq 0$.
Assume that $l:\R^+\to \R^+$ is a continuous increasing function such that $l(s)/s$ 
is increasing for $s$ sufficiently large and  for some $t_0>0$
\begin{equation}\label{cdtko}
\displaystyle \int_{t_0}^{+\infty}\frac{dt}{\sqrt{\int_0^t l(s)ds}}<+\infty\,.
\end{equation}

Then for every $\omega\subset \subset \Omega$ there exists a constant $C_{\omega}>0$ 
such that every sub-solution $v\in H^1_{loc}(\Omega)$ of $-\div(b(x)\nabla v) +B(x,v)=0$ such that
$v^+ \in L^{\infty}_{loc}(\Omega)$ and $B(x,v^+)\in L^1_{loc}(\Omega)$
satisfies 
$v\leq C_{\omega}$ in $\omega$. 
\end{theorem}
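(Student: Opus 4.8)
The plan is to prove this bound by comparison with an explicit supersolution that blows up on the boundary of a small ball, in the spirit of Keller and Osserman. First I localize: given $\omega\subset\subset\Omega$, I fix $R>0$ with $4R<\operatorname{dist}(\omega,\partial\Omega)$ and cover the compact set $\omega$ by finitely many balls $B_R(x_0)$, each with $\overline{B_{2R}(x_0)}\subset\Omega$. It then suffices to bound $v$ a.e.\ on one such $B_R(x_0)$ by a constant depending only on $N,\alpha,\beta,R,l$ and $m:=m_{B_{2R}(x_0)}>0$; recall that by hypothesis $B(x,s)\ge m\,l(s)$ on $B_{2R}(x_0)$ for all $s\ge0$.

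Next I build the barrier. I look for $W(x)=\psi(|x-x_0|)$ with $\psi\in C^2([0,2R))$ increasing and convex, $\psi(\rho)\to+\infty$ as $\rho\to(2R)^-$, which is a weak supersolution of $-\div(b\nabla W)+B(x,W)=0$ on $B_{2R}(x_0)$; since $B(x,W)\ge m\,l(W)$, it is enough that $-\div(b\nabla W)\ge-m\,l(W)$ there, which for $b\equiv1$ means $\Delta W\le m\,l(W)$. Taking $\psi$ to solve $\psi''+\tfrac{N-1}{\rho}\psi'=m\,l(\psi)$ with $\psi'(0)=0$ and multiplying by $\psi'$, an integration gives $\psi'(\rho)=\bigl(2\int_{\psi(0)}^{\psi(\rho)}m\,l(s)\,ds\bigr)^{1/2}$, so $\rho$ is an increasing function of $\psi(\rho)$ whose limit as $\psi(\rho)\to+\infty$ equals $\int_{\psi(0)}^{\infty}\bigl(2\int_{\psi(0)}^{s}m\,l\bigr)^{-1/2}ds$; this limit is finite by \eqref{cdtko} and decreases as $\psi(0)$ grows, so the free constant $\psi(0)=:K$ is tuned to make it exactly $2R$. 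The assumption that $l(s)/s$ be increasing for large $s$ keeps the profile monotone and $K$ finite and controlled by $N,\alpha,\beta,R,l,m$, and $\max_{\overline{B_R(x_0)}}W=\psi(R)=:C$ depends only on those data.

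Now comes the comparison. Since $v^{+}\in L^{\infty}_{loc}(\Omega)$, $v$ is bounded on $\overline{B_{2R}(x_0)}$ while $W\to+\infty$ at $\partial B_{2R}(x_0)$, so $(v-W)^{+}$ is supported in some $\{|x-x_0|\le\rho_1\}$ with $\rho_1<2R$ and lies in $H^1_0(B_{2R}(x_0))\cap L^{\infty}$; there $B(x,v)=B(x,v^{+})\in L^1$ and $B(x,W)$ is bounded, so $(v-W)^{+}$ is an admissible test function in both inequalities. Using it in the subsolution inequality for $v$ and the supersolution inequality for $W$ and adding, the zeroth-order terms produce $\int_{\{v>W\}}\bigl(B(x,v)-B(x,W)\bigr)(v-W)^{+}\ge0$, since $s\mapsto B(x,s)$ is nondecreasing ($l$ increasing) and $v>W$ on that set; together with $b\ge\alpha$ this forces $\int|\nabla(v-W)^{+}|^2=0$, so $(v-W)^{+}$ is a.e.\ constant, hence $\equiv0$ by its support. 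Thus $v\le W$ a.e.\ on $B_{2R}(x_0)$, whence $v\le C$ a.e.\ on $B_R(x_0)$; a finite cover of $\omega$ then gives $v\le C_{\omega}$ in $\omega$.

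The step I expect to be the main obstacle is legitimizing the barrier for a merely measurable $b$: a radial function need not be a sub- or supersolution of $-\div(b\nabla\cdot)$ — one cannot integrate by parts against $\nabla b$ — so the computation $-\div(b\nabla W)=-\Delta W$ above is special to constant coefficients. I would circumvent it by a $b$-adapted barrier, e.g.\ of the form $W=\Psi(\zeta)$ where $\zeta$ is a torsion- or capacity-type function associated with $-\div(b\nabla\cdot)$ on $B_{2R}(x_0)$ (positive inside, vanishing on the boundary, with interior estimates and two-sided boundary behaviour furnished by the De Giorgi--Nash--Moser theory) and $\Psi$ a profile selected from \eqref{cdtko}; verifying the weak supersolution inequality for this $W$ — with the favorable-sign lower-order term absorbed by $B(x,W)\ge m\,l(W)$ — is the delicate analytic core. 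Alternatively one may avoid barriers and run a Stampacchia-type iteration directly on $v$: testing the subsolution inequality with $(v-k)^{+}\eta^{2}$ retains the coercive term $m\int l(v)(v-k)^{+}\eta^{2}$, and \eqref{cdtko} is exactly what makes the induction over increasing levels $k$ and shrinking radii close at a level independent of $v$. A secondary, routine point in either route is justifying the admissibility of the test functions under the sole hypothesis $B(x,v^{+})\in L^1_{loc}$, handled by truncation and a passage to the limit.
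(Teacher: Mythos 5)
First, a point of context: the paper does not prove Theorem \ref{K-O} at all. It is imported as a black box, with the proof attributed to \cite{pellacci} and \cite{luigietaltri}, so there is no internal argument to compare yours against; in those references the local upper bound for subsolutions of operators with merely measurable coefficients is obtained by a test-function/level-set iteration, not by a barrier.

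Your primary route has a genuine gap, and it is exactly the one you flag yourself: the radial profile $W=\psi(|x-x_0|)$ is a supersolution only for the Laplacian. For a measurable $b$ with $\alpha\leq b\leq\beta$ there is no way to pass from $\Delta W\leq m\,l(W)$ to the weak inequality $\int b\,\nabla W\cdot\nabla\phi\geq-\int m\,l(W)\phi$; the identity $\div(b\nabla W)=b\Delta W+\nabla b\cdot\nabla W$ is unavailable and no pointwise rescaling by $\alpha,\beta$ repairs it, because $\Delta W$ changes sign contributions inside the weak formulation. Since the comparison step tests the supersolution inequality for $W$ against $(v-W)^{+}$, the whole argument rests on this unproved property, so as written the proof only covers $b$ constant (or Lipschitz, after modification). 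Your two proposed fixes are not carried out: the ``$b$-adapted barrier'' via a torsion function is itself a nontrivial construction whose supersolution property is precisely the hard point, and the Stampacchia iteration --- which is essentially what the cited references actually do --- is asserted to close ``because of \eqref{cdtko}'' without the induction being set up. There are also two smaller slips: (i) multiplying $\psi''+\frac{N-1}{\rho}\psi'=m\,l(\psi)$ by $\psi'$ gives only the inequality $\psi'(\rho)\leq\bigl(2\int_{\psi(0)}^{\psi(\rho)}m\,l\bigr)^{1/2}$ (the damping term has a sign), so the finite blow-up radius and its tuning to $2R$ require the separate two-sided Osserman estimates, which you omit; (ii) in the comparison you invoke monotonicity of $s\mapsto B(x,s)$, which is not a hypothesis --- this one is easily repaired by comparing $B(x,v)\geq m\,l(v)\geq m\,l(W)$ against the $m\,l$-supersolution inequality for $W$ instead, using only that $l$ is increasing and $W>0$.
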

\begin{remark}
We recall that a sub-solution of
$-\div(b(x)\nabla v) +l(v)g(x)=0$ is a $W^{1,1}_{loc}(\Omega)$ function such that
$$
\io b(x)\nabla v\cdot \nabla \phi +\io l(v)g(x)\phi\leq 0
$$ 
for every $C^{\infty}_c(\Omega)$ positive function $\phi$.
\end{remark}
\begin{remark}
In the literature condition (\ref{cdtko}) is called the Keller-Osserman condition, due to the papers \cite{keller, osserman}
on semilinear equations. 
\end{remark}
%The following result (for the proof see \cite{L-U}) will be useful to us. 
%\begin{thm}\label{L-U}
%Let $u\in L^{\infty}(\Omega)$ be a weak solution to
%$$
%-\div(M(x,u,\nabla u))+h(x,u,\nabla u)=0
%$$
%where $M$ and $h$ are measurable,
%$M(x,s,\xi)\cdot\xi\geq \nu(|s|)|\xi|^2-\mu(|s|)$
%and
%$|M(x,s,\xi)|(1+|\xi|)+|h(x,s,\xi)|\leq \mu(|s|)(1+|\xi|)^2$ for some positive functions $\mu, \nu$. Then $u$ is continuous on $\Omega$.
%\end{thm} 

\begin{proposition}\label{proposition_ko}
Let $1\leq \theta<2$. Let $u_n$ be the solutions of (\ref{pa}).
Then for every $\omega\subset\subset \Omega$ there exists a strictly positive constant $c_{\omega}$
such that $u_n\geq c_{\omega}$ in $\omega$ for every $n \in \N$.
\end{proposition}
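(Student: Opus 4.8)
The plan is to produce, uniformly in $n$, a function $v_n$ built from $u_n$ which blows up exactly where $u_n$ degenerates and which is a sub-solution of an equation of the type handled by Theorem \ref{K-O}; the bound $v_n\le C_\omega$ furnished by that theorem then reads as the sought uniform lower bound $u_n\ge c_\omega$ on $\omega$. As a preliminary remark, for each \emph{fixed} $n$ problem (\ref{pa}) is non-degenerate (the coefficient $\tfrac{b(x)}{(1+T_n(u_n))^p}$ lies in $[\alpha(1+n)^{-p},\beta]$), its lower order term is $g(u_n)|\nabla u_n|^2$ with $g$ bounded, and $T_n(f)$ is non-negative and not identically zero near any point; hence $u_n>0$ in $\Omega$ and, being continuous, $u_n$ is locally bounded away from $0$. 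Thus for any continuous decreasing $\Phi$ the competitor $v_n=\Phi(u_n)$ belongs to $H^1_{loc}(\Omega)$ with $v_n^+\in L^\infty_{loc}(\Omega)$, which are the standing hypotheses in Theorem \ref{K-O}.

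Choose a decreasing $\mu:(0,\infty)\to(0,\infty)$ with
\[
(\log\mu)'(s)\le-\frac{B(1+s)^p}{\alpha s^{\theta}},\qquad\text{the model choice being }\ \mu(s)=\exp\!\Big(\int_s^{1}\frac{B(1+t)^p}{\alpha t^{\theta}}\,dt\Big)
\]
(for $\theta=1$ one first multiplies by $s^{-\kappa}$, $\kappa>0$ small, which only lowers $(\log\mu)'$); since $\theta\ge1$ this forces $\mu(0^+)=+\infty$ strongly enough that $\Phi(s):=\int_s^{1}\tfrac{\mu(t)}{(1+t)^p}\,dt$, decreasing on $(0,1]$ with $\Phi(1)=0$, satisfies $\Phi(0^+)=+\infty$. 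Set $v_n:=\Phi(\min\{u_n,1\})$ and use $\mu(\min\{u_n,1\})\,\phi$ ($\phi\in C^\infty_c(\Omega)$, $\phi\ge0$) as a test function in (\ref{pa}). On $\{u_n\le1\}$ one has $T_n(u_n)=u_n$, so the flux term reproduces $-\io b(x)\nabla v_n\cdot\nabla\phi$ — this is where the degenerate-coercivity weight $(1+T_n(u_n))^{-p}$ is absorbed into $\Phi$ — while the ``commutator'' of the operator with the test function and the singular gradient term combine, on $\{u_n\le1\}$, into
\[
\io\Big[\frac{b(x)\mu'(u_n)}{(1+u_n)^p}+B\,\frac{u_n\,\mu(u_n)}{(u_n+1/n)^{\theta+1}}\Big]|\nabla u_n|^2\phi ,
\]
which is $\le0$ by the differential inequality for $\mu$ (together with $b\ge\alpha$ and $\tfrac{u_n}{(u_n+1/n)^{\theta+1}}\le u_n^{-\theta}$); the terms carried by $\{u_n>1\}$, where $v_n$ is constant, are absorbed by testing the equation on that set and retaining the non-negative contribution at $\{u_n=1\}$. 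Bounding $T_n(f)\ge T_1(f)=\min\{1,f\}$ one obtains
\[
\io b(x)\nabla v_n\cdot\nabla\phi+\io\min\{1,f(x)\}\,l(v_n)\,\phi\le0,\qquad l:=\mu\circ\Phi^{-1}-\mu(1),
\]
i.e. $v_n$ is a sub-solution of $-\div(b(x)\nabla v)+\min\{1,f(x)\}\,l(v)=0$.

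It remains to see that $l$ meets the hypotheses of Theorem \ref{K-O}. Continuity and monotonicity of $l$, and of $l(s)/s$ for $s$ large, follow from those of $\mu$ and $\Phi$ and from the differential inequality. For the Keller–Osserman condition (\ref{cdtko}), the substitution $\tau=\Phi(s)$ gives $\int_0^{\Phi(s)}l=\int_s^{1}\tfrac{\mu(t)^2}{(1+t)^p}\,dt-\mu(1)\Phi(s)=:G(s)-\mu(1)\Phi(s)$, which is comparable to $G(s)$ as $s\to0^+$, so that the integral in (\ref{cdtko}) reduces, after the same substitution, to a constant multiple of
\[
\int_0^{s_1}\frac{\mu(s)(1+s)^{-p}}{\sqrt{G(s)}}\,ds\ \le\ \int_0^{s_1}\sqrt{-(\log G)'(s)}\,ds .
\]
Using the ODE for $\mu$, an integration by parts shows $G(s)\sim c\,s^{\theta}\mu(s)^2$ as $s\to0^+$, hence $-(\log G)'(s)\sim c'\,s^{-\theta}$ and $\sqrt{-(\log G)'(s)}\sim c''\,s^{-\theta/2}$, which is integrable at $0$ \emph{precisely because $\theta<2$}. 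Thus $l$ is a Keller–Osserman function.

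Finally, apply Theorem \ref{K-O} with $B(x,s):=\min\{1,f(x)\}\,l(s)$, for which $m_\omega=\min\{1,\mbox{ess inf}\{f(x):x\in\omega\}\}>0$ by the hypothesis on $f$: for every $\omega\subset\subset\Omega$ there is $C_\omega>0$, depending only on $\omega,b,l$ and $\min\{1,f\}$ and \emph{not on $n$}, with $v_n\le C_\omega$ on $\omega$ for all $n$. After enlarging $C_\omega$ so that $\Phi^{-1}(C_\omega)\le1$, the inequality $\Phi(\min\{u_n,1\})\le C_\omega$ gives $\min\{u_n,1\}\ge\Phi^{-1}(C_\omega)$, i.e. $u_n\ge c_\omega:=\Phi^{-1}(C_\omega)>0$ on $\omega$ for every $n$. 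The heart of the matter — and the only point where $\theta<2$ is used — is the construction of $\mu$: it must at once absorb the singular gradient term with the correct sign (the differential inequality) and render $l=\mu\circ\Phi^{-1}-\mu(1)$ a Keller–Osserman nonlinearity, and the asymptotics $G(s)\sim c\,s^{\theta}\mu(s)^2$ show this is possible exactly when $\theta<2$; the absence of a sign for the commutator term where $u_n$ is large is a minor nuisance, neutralized by the truncation $\min\{u_n,1\}$, for which the competitor is then constant and the bound trivial there.
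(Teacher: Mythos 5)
Your route is the paper's: an integrating--factor change of unknown that absorbs the singular quadratic term with the correct sign, producing a subsolution of $-\div(b(x)\nabla v)+m_\omega\, l(v)\le 0$ with $l$ of Keller--Osserman type, followed by an application of Theorem \ref{K-O}; your verification of (\ref{cdtko}) via $G(s)\asymp s^{\theta}\mu(s)^2$ and $\sqrt{-(\log G)'(s)}\asymp s^{-\theta/2}$ is the same Laplace-type computation as the paper's Step 3, and $\theta<2$ enters in exactly the same place. Two technical choices differ. First, by testing the weak formulation directly with $\mu(\cdot)\phi$ rather than expanding $\div\bigl(b(x)a_n(u_n)\nabla u_n\bigr)$ pointwise, you never produce the term $a_n'(u_n)b(x)|\nabla u_n|^2$, and so you avoid the exponent $\beta/\alpha$ that the paper is forced to put into $\psi_n$; this is a genuine simplification. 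Second, you truncate at $u_n\wedge 1$ to make the substitution independent of $n$, whereas the paper keeps the $n$-dependent $\psi_n$ on all of $(0,\infty)$ and compares $\psi_n\ge\psi_1$ afterwards.

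The truncation is where your write-up has a real gap. With the test function $\mu(u_n\wedge 1)\phi$, the identity you obtain contains, besides the correctly signed terms on $\{u_n<1\}$, the leftover
$$
\int\limits_{\{u_n>1\}}\frac{b(x)}{(1+T_n(u_n))^p}\nabla u_n\cdot\nabla\phi
+B\int\limits_{\{u_n>1\}}\frac{u_n|\nabla u_n|^2}{(u_n+\frac1n)^{\theta+1}}\phi
-\int\limits_{\{u_n>1\}}T_n(f)\phi\,,
$$
whose middle term has the wrong sign and whose first term is unsigned. Saying these are ``absorbed by testing the equation on that set'' is not an argument: $\phi\chi_{\{u_n>1\}}$ is not admissible, and the whole point is the sign of the contribution concentrated on $\{u_n=1\}$. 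The claim is true and the fix is standard --- test (\ref{pa}) with $S_\delta(u_n)\phi$, $S_\delta(s)=\delta^{-1}\min\{(s-1)^+,\delta\}$, drop the non-negative term $\delta^{-1}\int_{\{1<u_n<1+\delta\}}\frac{b}{(1+T_n(u_n))^p}|\nabla u_n|^2\phi$, and let $\delta\to0$ to conclude that the displayed quantity is $\le 0$ --- but this step must actually be carried out before the subsolution inequality closes.

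Two further assertions need justification. The monotonicity of $l(s)/s$ for large $s$ does not ``follow from the differential inequality'': it is equivalent to $l'(v)\,v>l(v)$, i.e.\ to $h(t)\Phi(t)>\int_t^1 h(s)|\Phi'(s)|\,ds$ with $h(t)=l'(\Phi(t))=\frac{B(1+t)^{2p}}{\alpha t^{\theta}}$, and since $\Phi(t)=\int_t^1|\Phi'|$ both sides are asymptotically equal to $\mu(t)$ as $t\to 0^+$; the inequality is borderline, and the paper devotes a separate argument to it (splitting the integral at a point $w_0$ past which $h$ is monotone, reducing to $M_1h(t)-M_2>0$). That argument transfers verbatim to your $l$, but it cannot be omitted. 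Also, for $\theta=1$ your correction $s^{-\kappa}$ must satisfy $\kappa\ge 1-B/\alpha$ for $\Phi(0^+)=+\infty$ to hold (otherwise $\mu(t)\asymp t^{-B/\alpha}$ is integrable and the final step $u_n\ge\Phi^{-1}(C_\omega)$ is vacuous for large $C_\omega$), so $\kappa$ is not always ``small''. Finally, your preliminary remark that $u_n$ is locally bounded away from $0$ for fixed $n$ is load-bearing (it makes $\mu(u_n\wedge1)\phi$ admissible and $v_n^+\in L^\infty_{loc}$), but continuity of $u_n$ is not available for merely measurable $b$; one should argue via the weak Harnack inequality after an exponential substitution. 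The paper is silent on this last point as well, so flagging it is to your credit, but the justification offered is not the right one.
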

\begin{proof}
{\it Step 1.}  
Let $u_n$ be a $H^{1}_0(\Omega)\cap L^{\infty}(\Omega)$ solution to (\ref{pa}). 
We perform a change of variable in order to get a sub-solution of an elliptic semi-linear problem, as in Theorem \ref{K-O}.

We set $\displaystyle a_n(s)=\frac{1}{(1+T_n(s))^p}$. Then
$u_n$ satisfies, distributionally,
$$
-\div\left(b(x)a_n(u_n)\nabla u_n\right)+\frac{B}{u_n^{\theta}}{|\nabla u_n|^2}\geq T_1(f)\,,
$$
that is,
\begin{equation}\label{disupropko}
-\div(b(x)\nabla u_n) a_n(u_n)-a_n'(u_n)b(x)|\nabla u_n|^2+\frac{B}{u_n^{\theta}}{|\nabla u_n|^2}\geq T_1(f)\,.
\end{equation}
Let $\displaystyle k_n(t)=\int_1^t \frac{B}{\alpha r^{\theta}a_n(r)}dr$ and
$\displaystyle
\psi_n(s)=\int_s^1 e^{-k_n(t)}a_n^{\frac{\beta}{\alpha}}(t)dt\,.
$
We remark that 
\begin{equation}
\label{psi}
 \psi_n'(s)=-a_n^{\frac{\beta}{\alpha}}(s) e^{-k_n(s)}\,,\,\,\,\,\,\,\,\,\,\,\,\,\frac{\psi_n''(s)}{\psi_n'(s)}=\frac{\beta}{\alpha}\frac{a_n'(s)}{a_n(s)}-\frac{B}{\alpha s^{\theta}a_n(s)}\,.
\end{equation}
We define  $v_n=\psi_n(u_n)$. Then 
$$
\div(b(x) \nabla v_n) =
\div(b(x) \psi_n'(u_n)\nabla u_n)=
\psi_n'(u_n)\div(b(x)\nabla u_n)+b(x){\psi_n''(u_n)}|\nabla u_n|^2
$$
%and then
%$$
%\frac{\Delta v_n}{\psi_n'(u_n)} =\Delta u_n+\frac{\psi_n''(u_n)}{\psi_n'(u_n)}|\nabla u_n|^2
%$$
and therefore
$$
-a_n(u_n)\div(b(x)\nabla u_n)=-a_n(u_n)\frac{\div(b(x)\nabla v_n)}{\psi_n'(u_n)} +a_n(u_n)b(x)\frac{\psi_n''(u_n)}{\psi_n'(u_n)}|\nabla u_n|^2\,.
$$
By inequality (\ref{disupropko}) we have
$$
T_1(f)\leq 
\displaystyle -a_n(u_n)\frac{\div(b(x)\nabla v_n)}{\psi_n'(u_n)} +a_n(u_n)b(x)\frac{\psi_n''(u_n)}{\psi_n'(u_n)}|\nabla u_n|^2
-a_n'(u_n)b(x)|\nabla u_n|^2+\frac{B}{u_n^{\theta}}|\nabla u_n|^2\,.
$$
Using that $a_n'(s)\leq 0$, $\displaystyle \frac{\psi_n''(s)}{\psi_n'(s)}\leq 0$ and hypothesis (\ref{hyp_a}) we obtain 
$$
T_1(f)\leq -a_n(u_n)\frac{\div(b(x)\nabla v_n)}{\psi_n'(u_n)} +a_n(u_n)\alpha\frac{\psi_n''(u_n)}{\psi_n'(u_n)}|\nabla u_n|^2
-a_n'(u_n)\beta|\nabla u_n|^2+\frac{B}{u_n^{\theta}}|\nabla u_n|^2\,.
$$
Due to (\ref{psi})
$$
T_1(f)\leq -a_n(u_n)\frac{\div(b(x)\nabla v_n)}{\psi_n'(u_n)}\,.
$$
Observing that $\psi_n'(s)=-a_n^{\frac{\beta}{\alpha}}(s) e^{-k_n(s)}\leq 0$, 
$v_n$ satisfies
$$
0\geq -\div(b(x)\nabla v_n)+{T_1(f)}e^{-k_n(\psi_n^{-1}(v_n))}a_n^{\frac{\beta}{\alpha}-1}(\psi_n^{-1}(v_n)) 
\,.$$
{\it Step 2.}
We now study, for $s\geq 0$ 
$$
s\to e^{-k_n(\psi_n^{-1}(s))}a_n^{\frac{\beta}{\alpha}-1}(\psi_n^{-1}(s))\,.
$$
We remark that
 $\psi_n^{-1}(s)\leq 1$, since   $s\geq 0=\psi_n(1)$ and $\psi_n$ is decreasing.
Therefore
\begin{equation}\label{disu_facile}
a_n^{\frac{\beta}{\alpha}-1}(\psi_n^{-1}(s))\geq a_n^{\frac{\beta}{\alpha}-1}(1)=a_0\,,
\end{equation}
as $a_n$ is decreasing.

Recalling that
$$
\psi_n(s)=\int_s^1 e^{-k_n(t)}a_n^{\frac{\beta}{\alpha}}(t)dt\,,\,\,\,\,\,\,\,\,\,k_n(t)=\int_1^t \frac{B}{\alpha r^{\theta}a_n(r)}dr
$$
and
$$
\left\{
\begin{array}{ll}
a_n(s)=a_1(s)\,, & s\leq 1
\\
a_n(s)\leq a_1(s)\,, & s>1
\end{array}
\right.
$$ 
it is not difficult to prove that
\begin{equation}\label{disu_psi_n}
\psi_n(s)\geq \psi_1(s)\,,
\end{equation}
distinguishing the cases $s\leq 1$ and $s> 1$.
Now, inequality (\ref{disu_psi_n}) and the fact that $\psi_n$ is decreasing imply that $\psi_n^{-1}(s)\leq \psi_1^{-1}(s)$ for every $s\geq 0$.
Recalling that $\psi_n^{-1}(s)\leq 1$ and $a_n(s)=a_1(s)\geq 0$ for $s\geq 1$, we deduce easily that
\begin{equation}\label{disu_difficile}
\displaystyle e^{-k_n(\psi_n^{-1}(s))}
\geq
e^{-k_1(\psi_1^{-1}(s))}\,.
\end{equation}
%If $s<1$ this is easy to see, since $a_n(t)\leq a_1(t)$.
%If $s>1$ the above inequality is equivalent to
%$$
%\int^s_1 e^{-\int^t_1\frac{B}{\alpha r^{\theta}a_n(r)dr}}dt
%\leq \int^s_1 e^{-\int^t_1\frac{B}{\alpha r^{\theta}a_1(r)dr}}dt
%$$
%and this is true since $a_n(t)\leq a_1(t)$.
Due to (\ref{disu_facile}) and (\ref{disu_difficile}), $v_n$ satisfies 
$$
0\geq -\div(b(x)\nabla v_n)+B(x,v_n)
$$
with 
$$
B(x,s)=\left\{
\begin{array}{ll}
{T_1(f)}a_0\,l(s)\,,& s\geq 0
\\
0\,,& s\leq 0\,,
\end{array}
\right.
$$
where $l(s)=e^{-k_1(\psi_1^{-1}(s))}-1$, $s\geq 0$.

{\it Step 3.}
We are going to prove that $l$ satisfies the hypotheses of Theorem \ref{K-O}.

We observe that $l$ is continuous and  increasing, since $\psi_1^{-1}$ is decreasing and $k_1$ is increasing.
We claim that $l(s)/s$ is increasing for $s$ sufficiently large.
This is equivalent to prove that
$
\displaystyle Y(t)=\frac{l(\psi_1(t))}{\psi_1(t)}
$
is decreasing for small positive $t$.
Now,
$Y'(t)<0$ if and only if 
\begin{equation}\label{conditio_difficilissima}
l'(\psi_1(t))\psi_1(t)-\int_1^t l'(\psi_1(s))\psi_1'(s)ds> 0\,.
\end{equation}
We remark that $\displaystyle l'(\psi_1(s))=\frac{B}{\alpha s^{\theta}a_1^{{\frac{\beta}{\alpha}}+1}(s)}$.
Let $w_0 \in (0,1)$ be such that
$\displaystyle h(t)=l'(\psi_1(t))=\frac{B}{\alpha t^{\theta}a_1^{\frac{\beta}{\alpha}+1}(t)}$ is decreasing in $(0,w_0]$.
Therefore
$$
l'(\psi_1(t))\psi_1(t)-\int_1^t l'(\psi_1(s))\psi_1'(s)ds=
\int_{t}^1 e^{-k_1(s)}a_1^{\frac{\beta}{\alpha}}(s)
\left[
h(t)-h(s)
\right]ds
$$
$$
\geq \int_{w_0}^1 e^{-k_1(s)}a_1^{\frac{\beta}{\alpha}}(s)
\left[
h(t)-h(s)
\right]ds
$$
due to the choice of $w_0$.
Let 
$$
M_1=
\int_{w_0}^1 e^{-k_1(s)}
{a_1^{\frac{\beta}{\alpha}}(s)}
ds\,,\,\,\,\,\,\,\,
M_2=
\int_{w_0}^1 
e^{-k_1(s)}a_1^{\frac{\beta}{\alpha}}(s)
h(s)
ds\,.
$$
We have proved that  
$$
l'(\psi_1(t))\psi_1(t)-\int_1^t l'(\psi_1(s))\psi_1'(s)ds\geq M_1 h(t)-M_2\,.
$$
If $t$ is sufficiently small, the last quantity is positive, since $h$ is decreasing for small positive $t$. Therefore (\ref{conditio_difficilissima}) holds.

We are going to study the last condition on $l$, that is, the existence of a positive $t_0$
such that
\begin{equation}\label{condition_integrale}
\int_{t_0}^{+\infty}\frac{dt}{\sqrt{\int_0^t l(s)ds}}<\infty\,.
\end{equation}
%Remark that
%$
%l(s)=e^{-H(\psi_1^{-1}(s))}-1\,.
%$
Using the change of variable $\tau =\psi_1^{-1}(s)$ we get
$$
\int_0^t l(s)ds=\int_0^t [e^{-k_1(\psi_1^{-1}(s))}-1]ds=
\int_{\psi_1^{-1}(t)}^{1}[e^{-k_1(\tau)}-1]a_1^{\frac{\beta}{\alpha}}(\tau)e^{-k_1(\tau)} d\tau
\,.
$$
It is easy to see that
$
\displaystyle
e^{-k_1(\tau)}-1 
\geq
\frac 12
e^{-k_1(\tau)}
$ for $\tau\leq \tau_0$ sufficiently small. Moreover $a_1(\tau) \geq \frac 12$, for $\tau \leq 1$.
Therefore
it suffices  to find $t_0$ sufficiently large ($t_0>\psi_1(\tau_0)$) such that
$$
\int_{t_0}^{+\infty}\frac{dt}{\sqrt{\int_{\psi_1^{-1}(t)}^{1}e^{-2k_1(\tau)} d\tau}}<\infty\,.
$$
The last integral can be estimated, using the change $w=\psi_1^{-1}(t)$ and the fact that $a_1(s)\leq 1$,
in the following way:
$$
\int_{\psi_1^{-1}(t_0)}^{0}\frac{ \psi_1'(w)dw}{\sqrt{\int_{w}^{1}e^{-2k_1(\tau)} d\tau}}
=
\int^{\psi_1^{-1}(t_0)}_{0}\frac{e^{-k_1(w)}a_1^{\frac{\beta}{\alpha}}(w)dw}{\sqrt{\int_{w}^{1}e^{-2k_1(\tau)}d\tau}}
\leq
\int^{\psi_1^{-1}(t_0)}_{0}\frac{dw}{\sqrt{\int_{w}^{w_0}e^{2[k_1(w)-k_1(\tau)]}d\tau}}
$$
where $w_0$ is chosen in such a way that $k_1'$ is decreasing in $(0,w_0]$.
We observe that $\displaystyle \int_0^1\sqrt{k_1'(t)}dt<\infty$, as $\theta<2$. Hence
it suffices to prove that there exists a strictly positive constant $c$ such that
$$
k_1'(w)\int_{w}^{w_0}e^{2[k_1(w)-k_1(\tau)]} d\tau\geq c\,.
$$
Now, since $k_1'(\tau)$ is decreasing in $(0,w_0]$,
$$
k_1'(w)\int_{w}^{w_0}e^{2[k_1(w)-k_1(\tau)]} d\tau\geq 
\int_{w}^{w_0}k_1'(\tau)e^{2[k_1(w)-k_1(\tau)]} d\tau
=\frac 12-\frac 12 e^{2[k_1(w)-k_1(w_0)]}\,.
$$
Observe that $\displaystyle e^{2k_1(w)}\to 0$ as $w\to 0$, since 
$\displaystyle k_1(w)=\int_1^{w}\frac{B}{\alpha t^{\theta}a_1(t)}dt\to -\infty$ as $w\to 0$, by hypothesis
$\theta\geq 1$. Therefore (\ref{condition_integrale}) is proved.

{\it Step 4.}
Theorem \ref{K-O} applies and gives, for every 
$\omega\subset\subset \Omega$, the existence of  a constant $C_{\omega}>0$  such that $v_n\leq C_{\omega}$. 
Recalling that
$\psi_n(s)\geq \psi_1(s)$ by (\ref{disu_psi_n}), we have
$C_{\omega}\geq v_n=\psi_n(u_n)\geq \psi_1(u_n)$.
Since  $\psi_1$ is decreasing,
$u_n\geq \psi_1^{-1}(C_{\omega})=c_{\omega}>0$ in every $\omega\subset \subset \Omega$.
\end{proof}

%\begin{rem}
%This technique can be applied in the case $\theta<1$. One has to define ...
%\end{rem}
\begin{corollary}\label{secondo_corollario}
Let $1\leq \theta<2$. Let $u_n$ and
$u$ be
as in Lemma
\ref{lemma_convergenza_gradienti}. Then $\displaystyle \frac{|\nabla u|^2}{u^{\theta}} \in L^1(\Omega)$.
\end{corollary}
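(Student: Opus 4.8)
The plan is to follow verbatim the strategy of Corollary~\ref{primo_corollario}, the only difference being that the role of Proposition~\ref{funzionepositiva} (available only for $\theta<1$) is now played by Proposition~\ref{proposition_ko}. So I would pass to the limit in the a priori bound (\ref{primastima}) and invoke Fatou's lemma.

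First I would record that $u>0$ almost everywhere in $\Omega$. Fix any $\omega\subset\subset\Omega$; by Proposition~\ref{proposition_ko} there is $c_\omega>0$ with $u_n\geq c_\omega$ in $\omega$ for every $n$, and since $u_n\to u$ a.e. (Lemmata~\ref{primolemmaaprioriestimates}, \ref{secondolemmaaprioriestimates}, \ref{lemmastessobdo}) this gives $u\geq c_\omega$ a.e. in $\omega$. Exhausting $\Omega$ by such subsets yields $u>0$ a.e. in $\Omega$.

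Next, on the set $\{u>0\}$, which is almost all of $\Omega$, the a.e. convergence $u_n\to u$ forces $u_n+\tfrac1n\to u>0$, hence $\dfrac{u_n}{(u_n+\frac1n)^{\theta+1}}\to\dfrac{1}{u^{\theta}}$ a.e.; combined with $\nabla u_n\to\nabla u$ a.e. from Lemma~\ref{lemma_convergenza_gradienti}, the integrand in (\ref{primastima}) converges a.e. to $\dfrac{|\nabla u|^{2}}{u^{\theta}}$. Since this integrand is non-negative and $\io f<\infty$ (as $f\in L^m(\Omega)\subset L^1(\Omega)$), Fatou's lemma gives
$$
B\io\frac{|\nabla u|^{2}}{u^{\theta}}\ \leq\ \liminf_{n\to\infty}\,B\io\frac{u_n|\nabla u_n|^{2}}{(u_n+\frac1n)^{\theta+1}}\ \leq\ \io f\,,
$$
so $\dfrac{|\nabla u|^{2}}{u^{\theta}}\in L^{1}(\Omega)$.

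Since every ingredient — the a.e. convergences, the uniform $W^{1,\eta}_0(\Omega)$ bounds, and above all the Keller--Osserman type lower bound of Proposition~\ref{proposition_ko} — is already available, there is no genuine obstacle at this step. The only point requiring care is that the pointwise limit of the integrand be finite a.e., which is precisely what the positivity $u>0$ a.e. guarantees; and that positivity is the substantive fact, supplied by Proposition~\ref{proposition_ko}.
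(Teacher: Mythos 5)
Your argument is exactly the paper's: the proof of Corollary~\ref{secondo_corollario} consists of passing to the limit in (\ref{primastima}) via Fatou's lemma, using the a.e.\ convergences of $u_n$ and $\nabla u_n$ together with the local lower bound of Proposition~\ref{proposition_ko} in place of Proposition~\ref{funzionepositiva}. Your write-up just makes explicit the intermediate step that $u>0$ a.e.\ and the pointwise convergence of the integrand, which the paper leaves implicit.
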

\begin{proof}
As in the proof of  Corollary \ref{primo_corollario}, we pass to the limit in
(\ref{primastima})
using the a.e. convergence of $u_n$ to $u$ (see Lemmata \ref{primolemmaaprioriestimates},
\ref{secondolemmaaprioriestimates} 
and
\ref{lemmastessobdo}), the a.e. convergence of $\nabla u_n$ to $\nabla u$ (see Lemma \ref{lemma_convergenza_gradienti})
and Proposition \ref{proposition_ko}.
 \end{proof}
\begin{corollary}\label{corollario}
For every $\omega \subset \subset \Omega$ 
there exists a positive constant $\tilde{c}_{\omega}$ such that
$$
\frac{u_n}{(u_n+\frac 1n)^{1+\theta}}  \leq \tilde{c}_{\omega} \,\,\,\forall\, x \in \omega\,.
$$
\end{corollary}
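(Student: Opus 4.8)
The plan is to read this off immediately from the uniform local lower bound on the $u_n$ established in Proposition~\ref{proposition_ko}. Fix $\omega\subset\subset\Omega$ and let $c_{\omega}>0$ be the constant provided there, so that $u_n(x)\ge c_{\omega}$ for a.e. $x\in\omega$ and every $n\in\N$. The quantity whose boundedness we must prove is $s\mapsto s\,(s+\tfrac1n)^{-(1+\theta)}$, and the point is that, uniformly in $n$, it is dominated by $s^{-\theta}$, which in turn is bounded on $\{s\ge c_{\omega}\}$ because $\theta>0$.

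Concretely, I would argue as follows. Since $\tfrac1n>0$ and $1+\theta>0$, we have the pointwise inequality $(u_n+\tfrac1n)^{1+\theta}\ge u_n^{1+\theta}$, hence
$$
\frac{u_n}{(u_n+\frac1n)^{1+\theta}}\ \le\ \frac{u_n}{u_n^{1+\theta}}\ =\ \frac{1}{u_n^{\theta}}\qquad\text{a.e. in }\Omega .
$$
On $\omega$ one then invokes $u_n\ge c_{\omega}$ together with the fact that $t\mapsto t^{-\theta}$ is decreasing (as $\theta>0$) to get $u_n^{-\theta}\le c_{\omega}^{-\theta}$ a.e. in $\omega$, for every $n$. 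Setting $\tilde c_{\omega}:=c_{\omega}^{-\theta}$ yields the assertion.

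I do not anticipate any genuine obstacle: all the substantive work has already been carried out in Proposition~\ref{proposition_ko} via the Keller--Osserman argument, and what is left is the elementary chain of inequalities above. The only thing worth emphasising is that the estimate really requires the \emph{uniform-in-$n$} lower bound $u_n\ge c_{\omega}$ on compact subsets; mere positivity of the limit $u$ would not control the approximating sequence, and it is precisely this uniform bound that Proposition~\ref{proposition_ko} supplies in the range $1\le\theta<2$.
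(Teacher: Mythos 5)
Your argument is correct and is exactly the proof given in the paper: both bound $\frac{u_n}{(u_n+\frac1n)^{1+\theta}}$ by $u_n^{-\theta}$ and then invoke the uniform local lower bound $u_n\geq c_{\omega}$ from Proposition~\ref{proposition_ko} to conclude with $\tilde c_{\omega}=c_{\omega}^{-\theta}$. Nothing further is needed.
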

\begin{proof}
It is sufficient to observe that in every subset $\omega\subset \subset \Omega$
$$
\frac{u_n}{(u_n+\frac 1n)^{1+\theta}}\leq \frac{1}{u_n^\theta}\leq \frac{1}{c_{\omega}^\theta} = \tilde{c}_{\omega}\,,
$$
since $u_n\geq c_{\omega}>0$ in $\omega$ by Proposition \ref{proposition_ko}.
\end{proof}

As in \cite{luigietaltri} we prove the strong convergence of $T_k(u_n)$  in $H^1_{loc}(\Omega)$. 
This will be used to compute the limit of the lower order term in problems (\ref{pa}).
\begin{lemma}\label{ultimolemma}
Let $u_n$
be the solutions to problems (\ref{pa}) and $u$ be the function found in Lemmata \ref{primolemmaaprioriestimates}, \ref{secondolemmaaprioriestimates}, \ref{lemmastessobdo}.
Then, up to a subsequence, $T_k(u_n)\to T_k(u)$ in $H^1_{loc}(\Omega)$.
\end{lemma}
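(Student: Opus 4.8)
The plan is to adapt, as announced, the strong--convergence technique of \cite{luigietaltri} (in the spirit of \cite{boccardo_per_puel}): one tests the approximating equations against an exponentially reweighted truncation of $T_k(u_n)-T_k(u)$, localised by a cut-off, the exponential serving to absorb the quadratic lower order term, which on any $\omega'\subset\subset\Omega$ is bounded by $B\tilde{c}_{\omega'}|\nabla u_n|^2$ thanks to Corollary \ref{corollario}. Concretely, I would fix $k>0$, a set $\omega\subset\subset\Omega$ and $\phi\in C^1_0(\Omega)$ with $0\le\phi\le1$, $\phi\equiv1$ on $\omega$, put $\omega'=\{\phi>0\}\subset\subset\Omega$, $\Lambda=B\tilde{c}_{\omega'}$. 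Testing (\ref{pa}) with $T_k(u_n)$ and dropping the non-negative lower order term (as in the proof of Theorem \ref{teorema_energia_infinita}) shows that $T_k(u_n)$ is bounded in $H^1_0(\Omega)$, so $T_k(u_n)\rightharpoonup T_k(u)$ in $H^1_0(\Omega)$; by Lemma \ref{lemma_convergenza_gradienti} also $\nabla T_k(u_n)\to\nabla T_k(u)$ a.e. in $\Omega$ (using that $\nabla u=0$ a.e. on $\{u=k\}$). It therefore suffices to show $\io\phi^2|\nabla(T_k(u_n)-T_k(u))|^2\to0$.

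Next, writing $w_n=T_k(u_n)-T_k(u)$ (so $|w_n|\le2k$, $w_n\to0$ a.e. and in every $L^q(\Omega)$, $q<2^*$), I would pick $\gamma=\Lambda^2(1+k)^{2p}/(4\alpha^2)$ and $G(s)=s\,e^{\gamma s^2}$, so that $G'\ge1$ and $\tfrac{\alpha}{(1+k)^p}G'(s)-\Lambda|G(s)|\ge\tfrac{\alpha}{2(1+k)^p}$ for every $s$, and use $\phi^2 G(w_n)\in H^1_0(\Omega)\cap L^\infty(\Omega)$ as test function in (\ref{pa}). Expanding $\nabla(\phi^2 G(w_n))$ and using
$$
\nabla u_n\cdot\nabla w_n=|\nabla w_n|^2+\nabla T_k(u)\cdot\nabla w_n-\nabla u_n\cdot\nabla u\,\chi_{\{u_n>k,\,u<k\}}\,,
$$
one sees that the following terms are $o(1)$ as $n\to\infty$: the right hand side $\io T_n(f)\phi^2 G(w_n)$ (dominated convergence, $T_n(f)\le f\in L^1(\Omega)$, $G(w_n)\to0$ a.e.); the term $2\io\frac{b(x)}{(1+T_n(u_n))^p}\phi\, G(w_n)\,\nabla u_n\cdot\nabla\phi$ (the factor $\frac{b(x)}{(1+T_n(u_n))^p}\phi\, G(w_n)\nabla\phi$ tends to $0$ strongly in every $L^q(\Omega)$, while $\nabla u_n$ is bounded in $L^\eta(\Omega)$, $\eta$ being $2$, $\sigma$ or $s$); the term containing $\nabla T_k(u)\cdot\nabla w_n$ (strong--weak, since $\frac{b(x)}{(1+T_n(u_n))^p}\phi^2 G'(w_n)\nabla T_k(u)\to\frac{b(x)}{(1+u)^p}\phi^2\nabla T_k(u)$ in $L^2(\Omega)$ while $\nabla w_n\rightharpoonup0$); and $\io\phi^2|\nabla w_n|^2\chi_{\{u_n>k\}}$, on which $|\nabla w_n|^2=|\nabla T_k(u)|^2$ and $\chi_{\{u_n>k\}}\to\chi_{\{u>k\}}$ a.e. while $\nabla T_k(u)=0$ a.e. on $\{u>k\}$. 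Since the lower order term is $\ge0$ and $G(w_n)\ge0$ on $\{u_n>k\}$, its contribution over $\{u_n>k\}$ has a favourable sign and is dropped; over $\{u_n\le k\}$ it is $\ge-\Lambda\io\phi^2|G(w_n)||\nabla T_k(u_n)|^2$, which equals $-\Lambda\io_{\{u_n\le k\}}\phi^2|G(w_n)||\nabla w_n|^2+o(1)$ (again because $G(w_n)\to0$ a.e.) and is thus absorbed into the operator term by the choice of $\gamma$ (on $\{u_n\le k\}$ the coefficient is $\ge\frac{\alpha}{(1+k)^p}$ for $n\ge k$). One is left with
$$
\frac{\alpha}{2(1+k)^p}\io\phi^2|\nabla w_n|^2\le o(1)+\left|\io\frac{b(x)}{(1+T_n(u_n))^p}\phi^2 G'(w_n)\,\nabla u_n\cdot\nabla u\,\chi_{\{u_n>k,\,u<k\}}\right|\,.
$$

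The remaining, and technically most delicate, step is to prove that the last integral tends to $0$; here the degenerate coercivity is felt, because on $\{u_n>k\}$ the gradients $\nabla u_n$ are only uniformly bounded in $L^\eta(\Omega)$ with $\eta\le2$. I would use that $\nabla T_k(u)\,\chi_{\{u_n>k,\,u<k\}}\to0$ a.e. (since $\nabla u=0$ a.e. on $\{u=k\}$ and $\mu(\{u_n>k,\,u<k\})\to0$) and is dominated by $|\nabla T_k(u)|\in L^2(\Omega)$, hence $\to0$ strongly in $L^2(\Omega)$; combined with the bounded factor $\phi^2 G'(w_n)$, the flux $\frac{b(x)}{(1+T_n(u_n))^p}\nabla u_n$, and the uniform estimates of Lemmata \ref{primolemmaaprioriestimates}, \ref{secondolemmaaprioriestimates}, \ref{lemmastessobdo} --- in the infinite--energy case also the weighted gradient estimate established in the proof of Lemma \ref{secondolemmaaprioriestimates}, which improves the integrability of the flux on $\{u_n>k\}$ --- a Vitali/dominated--convergence argument makes this integral $o(1)$. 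Then $\limsup_n\io\phi^2|\nabla w_n|^2\le0$, i.e. $T_k(u_n)\to T_k(u)$ in $H^1(\omega)$; since $\omega\subset\subset\Omega$ is arbitrary, a diagonal subsequence gives $T_k(u_n)\to T_k(u)$ in $H^1_{loc}(\Omega)$. I expect this control of the terms supported on $\{u_n>k\}$ to be the main obstacle.
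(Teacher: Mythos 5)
Your proposal follows essentially the same route as the paper's proof: testing with $\varphi_\lambda(T_k(u_n)-T_k(u))$ times a cut-off (the paper uses $\phi$ rather than $\phi^2$), bounding the singular quadratic term on the support of the cut-off via Corollary \ref{corollario} by $\Lambda|\nabla T_k(u_n)|^2|\varphi_\lambda|$ (using that $u_n<k$ wherever this term has the unfavourable sign), absorbing it through the choice of $\lambda$, and sending the remaining cross terms to zero. The one place you flag as the main obstacle --- the operator term on $\{u_n\ge k\}$, i.e.\ $\nabla u_n\cdot\nabla T_k(u)\,\chi_{\{u_n\ge k\}}$ --- is exactly the term the paper dismisses as $\varepsilon(n)$ without detail, so your treatment is, if anything, more explicit than the original at that point.
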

\begin{proof}
We are going to prove
that
$$
\lim_{n\to \infty}
\io|\nabla (T_k(u_n)-T_k(u))|^2\phi=0
$$
for all positive $\phi \in C_c^{\infty}(\Omega)$.
Let $\varphi_\lambda(s)=se^{\lambda s^2}$, $\lambda>0$.
As in \cite{bmp}, we will consider as a test function 
$
\varphi_\lambda(T_k(u_n)-T_k(u))\phi
$, 
where $\lambda$ will be chosen later. 
In the sequel  $\varepsilon(n)$ will denote any quantity converging to 0, as $n\to \infty$.
From (\ref{pa}) we get
\begin{equation}\label{lemma_ultimo}
\begin{array}{c}
\displaystyle\io \frac{b(x)}{(1+T_n(u_n))^p}\nabla u_n\cdot \nabla(T_k(u_n)-T_k(u))\varphi_\lambda'(T_k(u_n)-T_k(u))\phi
\\
\displaystyle
+
B \io\frac{u_n |\nabla u_n|^2}{(u_n+\frac 1n)^{\theta +1}}\varphi_\lambda(T_k(u_n)-T_k(u))\phi
\\
\displaystyle
=-\io \frac{b(x)}{(1+T_n(u_n))^p}\nabla u_n\cdot \nabla \phi\,\varphi_\lambda(T_k(u_n)-T_k(u))
+\io T_n(f)\varphi_\lambda(T_k(u_n)-T_k(u))\phi
\,.
\end{array}
\end{equation}
It is not difficult to prove that 
$$
\io T_n(f)\varphi_\lambda(T_k(u_n)-T_k(u))\phi\to 0
\,,\,\,\,\io \frac{b(x)}{(1+T_n(u_n))^p}\nabla u_n\cdot \nabla \phi\,\varphi_\lambda(T_k(u_n)-T_k(u))\to 0\,,
$$
as $n\to \infty$. Indeed for the first limit one can use the Lebesgue Theorem.
For the second one it is sufficient to observe that
$\nabla u_n$ converges weakly in some Sobolev space
given by the statements of Lemmata \ref{primolemmaaprioriestimates},
 \ref{secondolemmaaprioriestimates} and \ref{lemmastessobdo}
and 
$\displaystyle \frac{b(x)}{(1+T_n(u_n))^p}\nabla \phi\,\varphi_\lambda(T_k(u_n)-T_k(u))$ is uniformly bounded
with respect to $n$.

We are going to treat the left hand side of (\ref{lemma_ultimo}).
We choose $\omega_{\phi}\subset \subset \Omega$, with $\mbox{supp} \phi\subset \omega_{\phi}$. Then
$$
B \io\frac{u_n |\nabla u_n|^2}{(u_n+\frac 1n)^{\theta +1}}\varphi_\lambda(T_k(u_n)-T_k(u))\phi
\geq
-B\tilde{c}_{\omega_{\phi}}\io |\nabla T_k(u_n)|^2|\varphi_{\lambda}(T_k(u_n)-T_k(u))|\phi
$$
by Corollary \ref{corollario}.
We deduce from (\ref{lemma_ultimo}) that
\begin{equation}\label{lemma_ultimo_seconda}
\begin{array}{c}
\displaystyle\io \frac{b(x)}{(1+T_n(u_n))^p}\nabla u_n\cdot \nabla(T_k(u_n)-T_k(u))\varphi_\lambda'(T_k(u_n)-T_k(u))\phi
\\
\displaystyle
-B\tilde{c}_{\omega_{\phi}}\io |\nabla T_k(u_n)|^2|\varphi_{\lambda}(T_k(u_n)-T_k(u))|\phi\leq \varepsilon(n)\,.
\end{array}
\end{equation}
We remark that
$$
\int\limits_{\{u_n\geq k\}} \frac{b(x)}{(1+T_n(u_n))^p}\nabla u_n\cdot \nabla(T_k(u_n)-T_k(u))\varphi_\lambda'(T_k(u_n)-T_k(u))\phi=\varepsilon(n)\,.
$$
Hence  inequality (\ref{lemma_ultimo_seconda}) is equivalent to
\begin{equation}\label{lemma_ultimo_seconda_bis}
\begin{array}{c}
\displaystyle
\int\limits_{\{u_n\leq k\}} \frac{b(x)}{(1+T_n(u_n))^p}\nabla u_n\cdot \nabla(T_k(u_n)-T_k(u))\varphi_\lambda'(T_k(u_n)-T_k(u))\phi
\\
\displaystyle
-B\tilde{c}_{\omega_{\phi}}\io |\nabla T_k(u_n)|^2|\varphi_{\lambda}(T_k(u_n)-T_k(u))|\phi\leq \varepsilon(n)\,.
\end{array}
\end{equation}
Remark that
$$
\int\limits_{\{u_n\leq k\}} \frac{b(x)}{(1+T_n(u_n))^p}\nabla T_k(u)\cdot \nabla(T_k(u_n)-T_k(u))\varphi_\lambda'(T_k(u_n)-T_k(u))\phi\to 0\,,\,\,\, n\to \infty\,.
$$
Adding the above quantity in both sides of (\ref{lemma_ultimo_seconda_bis})
we get
$$
\int\limits_{\{u_n\leq k\}} \frac{b(x)}{(1+T_n(u_n))^p}\nabla (u_n-T_k(u))\cdot \nabla(T_k(u_n)-T_k(u))\varphi_\lambda'(T_k(u_n)-T_k(u))\phi
$$
$$
-B\tilde{c}_{\omega_{\phi}}\io |\nabla T_k(u_n)|^2|\varphi_{\lambda}(T_k(u_n)-T_k(u))|\phi
\leq \varepsilon(n)\,.
$$
By  hypothesis (\ref{hyp_a}) on $b$, we obtain
\begin{equation}\label{lemma_ultimo_terza}
\begin{array}{c}
\displaystyle
\int\limits_{\{u_n\leq k\}} \frac{\alpha}{(1+k)^p}|\nabla(T_k(u_n)-T_k(u))|^2\varphi_\lambda'(T_k(u_n)-T_k(u))\phi
\\
\displaystyle
-B\tilde{c}_{\omega_{\phi}}\io |\nabla T_k(u_n)|^2|\varphi_{\lambda}(T_k(u_n)-T_k(u))|\phi\leq \varepsilon(n)\,.
\end{array}
\end{equation}
It is easy to prove that
$$
\io |\nabla T_k(u_n)|^2|\varphi_{\lambda}(T_k(u_n)-T_k(u))|\phi\leq
\int\limits_{\{u_n\leq k\}}2|\nabla (T_k(u_n)-T_k(u))|^2|\varphi_\lambda(T_k(u_n)-T_k(u))|\phi+\varepsilon(n)\,.
$$
We deduce from (\ref{lemma_ultimo_terza}) that the quantity
\begin{equation}\label{ultimissima}
\displaystyle
\int\limits_{\{u_n\leq k\}} \left[\frac{\alpha}{(1+k)^p}\varphi_\lambda'(T_k(u_n)-T_k(u))-2B\tilde{c}_{\omega_{\phi}}|\varphi_\lambda(T_k(u_n)-T_k(u))|\right]|\nabla(T_k(u_n)-T_k(u))|^2\phi
\end{equation}
tends to 0.
Now, $\varphi_\lambda$ has the following property:
for every $a, b>0$,
$\displaystyle a\varphi_\lambda'(s)-b|\varphi_\lambda(s)|\geq \frac a2$ if $\displaystyle \lambda>\frac{b^2}{4a^2}$.
Therefore there exists  $\lambda>0$ such that
$$
\frac{\alpha}{(1+k)^p}\varphi_\lambda'(s)-2B\tilde{c}_{\omega_{\phi}}|\varphi_\lambda(s)|\geq \frac{\alpha}{2(1+k)^p} \,\,\,\,\forall\,s \in \R\,.
$$
Applying this inequality to the quantity
(\ref{ultimissima}),
the statement of the theorem is  proved.
\end{proof}

We are now going to prove Theorems \ref{teorema_energia_finita2} and \ref{teorema_energia_infinita2} in a unique proof.
As we will see the only difference is the choice of the test functions $\varphi$.  
Theorem \ref{teoremastimebdo} can be proved with the same technique.
\begin{proof}
By Lemmata \ref{primolemmaaprioriestimates} and
\ref{secondolemmaaprioriestimates}
the solutions $u_n$ to (\ref{pa}) are uniformly bounded in $H^1_0(\Omega)$ and $W^{1,\sigma}_0(\Omega)$ respectively; moreover
$\nabla u_n$ converges  to $\nabla u$ a.e. in $\Omega$ up to a subsequence, by Lemma \ref{lemma_convergenza_gradienti}. 
The solutions $u_n$ satisfy
$$
\io \frac{b(x)}{(1+T_n(u_n))^p}\nabla u_n\cdot\nabla \varphi
+B\io \frac{|\nabla u_n|^2 u_n}{(u_n+\frac 1n)^{1+\theta}}\varphi=
\io T_n(f)\varphi\,.
$$
For the proof of Theorem \ref{teorema_energia_finita2} we consider
for $\varphi$  a bounded  $H^1_0(\Omega)$ function.
For the proof of Theorem  \ref{teorema_energia_infinita2},
 $\varphi$ is a $C^1_0(\Omega)$ function. 
To compute 
the limit of the first term  
in the case where $u_n$ weakly converges to $u$  in $H^1_0(\Omega)$   (Theorem \ref{teorema_energia_finita2})
it is sufficient to use that
$\displaystyle \frac{b(x)}{(1+T_n(u_n))^p}\nabla \varphi$ strongly converges to $\displaystyle \frac{b(x)}{(1+u)^p}\nabla \varphi$
in $(L^2(\Omega))^N$ for every $\varphi \in H^1_0(\Omega)\cap L^{\infty}(\Omega)$.
In the case where
$u_n$ weakly converges  to $u$  in $W^{1,\sigma}_0(\Omega)$, with $\sigma<2$ (Theorem \ref{teorema_energia_infinita2}), one uses
that $\displaystyle \frac{b(x)}{(1+T_n(u_n))^p}\nabla \varphi$ strongly converges to $\displaystyle \frac{b(x)}{(1+u)^p}\nabla \varphi$
in $(L^r(\Omega))^N$ for every $r\geq 1$ and for every $\varphi \in C^1_0(\Omega)$.

To compute the limit of $\displaystyle \io \frac{|\nabla u_n|^2 u_n}{(u_n+\frac 1n)^{1+\theta}}\varphi$ 
we will use the same technique as in \cite{luigietaltri}. 
We are going to prove that 
$\displaystyle \frac{|\nabla u_n|^2 u_n}{(u_n+\frac 1n)^{1+\theta}}$ is equi-integrable.
Let $E \subset \subset \omega\subset \subset \Omega$. Then
$$
\int\limits_E \frac{|\nabla u_n|^2 u_n}{(u_n+\frac 1n)^{1+\theta}}\leq
\int\limits_{E\cap \{u_n\leq k\}} \frac{|\nabla u_n|^2 u_n}{(u_n+\frac 1n)^{1+\theta}}+
\int\limits_{E\cap \{u_n\geq k\}} \frac{|\nabla u_n|^2 u_n}{(u_n+\frac 1n)^{1+\theta}}
$$
$$
\leq
\tilde{c}_{\omega}\int\limits_{E\cap \{u_n\leq k\}} |\nabla T_k(u_n)|^2+
\int\limits_{\{u_n\geq k\}}\frac{|\nabla u_n|^2 u_n}{(u_n+\frac 1n)^{1+\theta}}\,,
$$ 
where we have used Corollary \ref{corollario} to estimate the first term.
Now, if we choose $T_1(u_n-T_{k-1}(u_n))$ in problems (\ref{pa}) we have, dropping the non-negative operator term,
\begin{equation}\label{ultima_passaggio_limite}
B\int\limits_{\{u_n\geq k\}}
\frac{|\nabla u_n|^2 u_n}{(u_n+\frac 1n)^{\theta +1}}
\leq
\int\limits_{\{u_n\geq k-1\}}f\,.
\end{equation}
Observe that there exists a constant $C>0$ such that
$\displaystyle \mu(\{u_n\geq k-1\})\leq \frac{C}{k-1}$, as $u_n$ are uniformly bounded in $L^1(\Omega)$.
This implies that the right hand side of (\ref{ultima_passaggio_limite})  converges to 0 as $k\to \infty$, 
uniformly with respect to $n$. We deduce that
there exists $k_0>1$ such that
\begin{equation}\label{primadispassaggiolimite}
\int\limits_{\{u_n\geq k\}}
\frac{|\nabla u_n|^2 u_n}{(u_n+\frac 1n)^{\theta +1}}\leq\frac{\varepsilon}{2}\,\,\,\,\,\,\,\,\forall\, k\geq k_0\,,\,\,\,\forall\, n\in \N\,.
\end{equation}
Moreover, since $T_k(u_n)\to T_k(u)$ in $H^1_{loc}(\Omega)$ by Lemma \ref{ultimolemma},
 there exist $n_{\varepsilon}, \delta_{\varepsilon}$ such that for every $E\subset\subset \Omega$
with $\mu(E)<\delta_{\varepsilon}$ we have
$$
\int\limits_{E\cap \{u_n\leq k\}} |\nabla T_k(u_n)|^2
=
\int\limits_{E} |\nabla T_k(u_n)|^2\leq
\frac{\varepsilon}{2 \tilde{c}_{\omega}}\,\,\,\,\,\,\,\forall\, n\geq n_{\varepsilon}\,.
$$
This and (\ref{primadispassaggiolimite}) imply that 
$\displaystyle \frac{|\nabla u_n|^2 u_n}{(u_n+\frac 1n)^{1+\theta}}$ is equi-integrable.
Now, recall that $\displaystyle \frac{|\nabla u_n|^2 u_n}{(u_n+\frac 1n)^{1+\theta}}$ converges a.e. to $\displaystyle \frac{|\nabla u|^2}{u^{\theta}}$, belonging to $L^1(\Omega)$ by Corollary \ref{secondo_corollario}.
By Vitali's theorem 
we have the result.
\end{proof}

\section{A non-existence result in the case $\theta \geq 2$}
We are going to prove Theorem \ref{thmnonexistence} about the non-existence of finite energy solutions to problem (\ref{pb})
when $\theta\geq 2$.
We will use the following result of \cite{luigietaltri}:
\begin{theorem}\label{pbellitticoausiliario}
Let $M(x,s)$ be a $N\times N$ matrix whose entries are Carath\'eodory functions $m_{ij}: \Omega \times \R\to \R$, for every $i, j=1,\dots, N$. Assume that
there exist two positive constants $\alpha_1, \beta_1$ such
that 
$
M(x,s)\xi\cdot \xi\geq \alpha_1 |\xi|^2
$ 
and
$
|M(x,s)|\leq \beta_1 
$
for a.e. $x \in \Omega$, and for all $(s,\xi) \in \R\times \R^N$.
Let $g: \Omega \times (0,+\infty)\to \R^+$ be a Carath\'eodory function such that
for some constants $s_0, \Lambda>0$ and $\theta\geq 2$ it holds
$$
g(x,s)\geq \frac{\Lambda}{s^{\theta}} \,\,\,\,\,\forall\,\,s\in (0,s_0]\,.
$$
Let $f\geq 0$, $f \not\equiv 0$, be a $L^q(\Omega)$ function, with $q>\frac N2$.
If one of the following conditions holds:
\begin{enumerate}
\item $\theta >2$
\item $\theta =2$ and $\lambda_1(f)>\frac{\beta_1}{\Lambda\alpha_1}$,  
\end{enumerate} then there is no
$H^1_0(\Omega)$ solution to problem
$$
\begin{cases}
\displaystyle
-\div\left(M(x,u)\nabla u\right) + g(x,u){|\nabla u|^{2}} = f & \mbox{in $\Omega$,} \\
\hfill u = 0 \hfill & \mbox{on $\partial\Omega$.}
\end{cases}
$$
\end{theorem}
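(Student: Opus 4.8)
The plan is to argue by contradiction and to feed a single, carefully chosen function built from $u$ into the variational characterisation of $\lambda_1(f)$, producing a Rayleigh quotient that is too small. So suppose $u\in H^1_0(\Omega)$ is a solution. First I would note that $u>0$ a.e.: since $g(x,u)|\nabla u|^2\ge 0$, the function $u$ is a nonnegative subsolution of $-\div(M(x,u)\nabla u)=f$, and as $f\ge 0$, $f\not\equiv 0$, the strong maximum principle gives $u>0$ (this is also forced by $g$ being defined only for $s>0$). I would also recall that the hypothesis $q>\tfrac N2$ is exactly what makes $\io f w^2<+\infty$ for every $w\in H^1_0(\Omega)$, by Hölder and $H^1_0(\Omega)\hookrightarrow L^{2^*}(\Omega)$, so that
$$
\lambda_1(f)=\inf\Big\{\io|\nabla w|^2 : w\in H^1_0(\Omega),\ \io f w^2=1\Big\}
$$
is well defined and strictly positive.

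The engine of the proof is one test function. For an exponent $\kappa\ge 1$ and a level $t\in(0,s_0]$ put
$$
G(s)=\begin{cases} s^{\kappa}, & 0\le s\le t,\\ t^{\kappa}, & s>t,\end{cases}
\qquad w:=G(u)\in H^1_0(\Omega)\cap L^\infty(\Omega).
$$
Because $G$ is flat for $s>t$, both $\nabla w=G'(u)\nabla u$ and all the integrals below live on $\{u\le t\}$, which is precisely where the hypothesis $g(x,s)\ge \Lambda s^{-\theta}$ is available. I would insert the bounded test function $\varphi=w^2=G(u)^2$ into the weak formulation, obtaining
$$
\io 2G(u)G'(u)\,M(x,u)\nabla u\cdot\nabla u+\io g(x,u)\,G(u)^2|\nabla u|^2=\io f\,w^2 .
$$
The first integral is $\ge 0$ ($M$ coercive, $G,G'\ge 0$) and can be discarded; in the second I use $g\ge \Lambda u^{-\theta}$ on $\{u\le t\}$. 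This leaves the two estimates
$$
\Lambda\!\!\int\limits_{\{u\le t\}}\!\! u^{2\kappa-\theta}|\nabla u|^2\le \io f\,w^2,
\qquad
\io|\nabla w|^2=\kappa^2\!\!\int\limits_{\{u\le t\}}\!\! u^{2\kappa-2}|\nabla u|^2 .
$$

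In the critical case $\theta=2$ the two integrands coincide up to the factor $\kappa^2/\Lambda$, since $\kappa^2u^{2\kappa-2}=\tfrac{\kappa^2}{\Lambda}\,\Lambda u^{-2}u^{2\kappa}$; hence $\io|\nabla w|^2\le \tfrac{\kappa^2}{\Lambda}\io f w^2$, i.e. the Rayleigh quotient of $w$ is at most $\kappa^2/\Lambda$. Choosing the exponent $\kappa=\sqrt{\beta_1/\alpha_1}\ (\ge 1)$, adapted to the ellipticity ratio of $M$, this reads $\lambda_1(f)\le \beta_1/(\Lambda\alpha_1)$, contradicting the standing assumption. In the supercritical case $\theta>2$ I would instead combine the two estimates with $\kappa=\theta/2\ (\ge1)$ into
$$
\lambda_1(f)\,\Lambda\!\!\int\limits_{\{u\le t\}}\!\! u^{2\kappa-\theta}|\nabla u|^2\le \kappa^2\!\!\int\limits_{\{u\le t\}}\!\! u^{2\kappa-2}|\nabla u|^2 ,
$$
where now $2\kappa-\theta=0$ and $2\kappa-2=\theta-2>0$; bounding $u^{\theta-2}\le t^{\theta-2}$ on $\{u\le t\}$ and dividing by $\int_{\{u\le t\}}|\nabla u|^2>0$ gives $\lambda_1(f)\Lambda\le \tfrac{\theta^2}{4}\,t^{\theta-2}$, and letting $t\to 0^+$ forces $\lambda_1(f)\le 0$, absurd. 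Thus no solution exists, unconditionally, when $\theta>2$.

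The step I expect to fight with is the critical exponent $\theta=2$ and, with it, the exact constant $\beta_1/(\Lambda\alpha_1)$. Three points need care: (i) checking that $\varphi=G(u)^2$ is an admissible test function and that $g(x,u)|\nabla u|^2\varphi\in L^1(\Omega)$ up to $\partial\Omega$ (near the boundary it behaves like $u^{2\kappa-\theta}|\nabla u|^2$, integrable precisely because $\kappa\ge 1$ and $\theta\le 2$); (ii) ensuring that $w=G(u)$ genuinely lies in $H^1_0(\Omega)$, which is again why the power is capped at $t$ and $\kappa\ge 1$ is imposed, so that $u^{2\kappa-2}$ stays bounded on $\{u\le t\}$ and $\io|\nabla w|^2<\infty$; and (iii) tuning the exponent to $\kappa=\sqrt{\beta_1/\alpha_1}$ so as to land exactly on the threshold $\beta_1/(\Lambda\alpha_1)$ — handling a genuinely anisotropic $M(x,s)$ cleanly may instead require first performing a change of unknown $v=\phi(u)$ comparing the operator to the Laplacian, which is where both $\alpha_1$ and $\beta_1$ enter. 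The localisation to $\{u\le s_0\}$, imposed by the fact that the lower bound on $g$ is assumed only for small argument, is what makes the truncated power $G$ (rather than a pure power) indispensable.
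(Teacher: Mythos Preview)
The paper does not prove this theorem; it is quoted from \cite{luigietaltri} and used only as a black box in the proof of Theorem~\ref{thmnonexistence}. There is thus no proof in the paper to compare your attempt against.

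That said, your argument is essentially a correct self-contained proof. Testing with $\varphi=G(u)^2$, $G(s)=\min(s^\kappa,t^\kappa)$, is the right move, and for $\theta>2$ the computation is clean. For $\theta=2$, however, your handling of the constant is muddled: since you discard the nonnegative operator term $\io 2G(u)G'(u)\,M\nabla u\cdot\nabla u$ entirely, the bounds $\alpha_1,\beta_1$ on $M$ never enter the estimate, and you obtain $\lambda_1(f)\le\kappa^2/\Lambda$ for \emph{every} $\kappa\ge 1$. The optimal choice is simply $\kappa=1$, yielding $\lambda_1(f)\le 1/\Lambda$, which is strictly stronger than the stated threshold $\beta_1/(\Lambda\alpha_1)$ (since $\beta_1\ge\alpha_1$). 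Your choice $\kappa=\sqrt{\beta_1/\alpha_1}$ is backwards---larger $\kappa$ gives a \emph{worse} Rayleigh bound---and the contortions in point (iii) about a change of unknown are unnecessary: the constant in the cited theorem is not sharp, and your argument already proves more than is claimed.

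One loose end: the sentence ``$u$ is a nonnegative subsolution of $-\div(M(x,u)\nabla u)=f$, and as $f\ge 0$ the strong maximum principle gives $u>0$'' does not hold up---a subsolution inequality yields upper, not lower, bounds. In practice strict positivity of $u$ is part of what it means to be a solution here (since $g(x,\cdot)$ is only defined on $(0,+\infty)$), so this is a definitional point rather than a genuine gap; you also need, and should note, that $\int_{\{u\le t\}}|\nabla u|^2>0$ for small $t$, which follows from $u\in H^1_0(\Omega)$ being nonconstant.
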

\begin{proof} (of Theorem \ref{thmnonexistence})
By the change of variables
 $$
v=\left\{
\begin{array}{ll}
\displaystyle \frac{1-(1+u)^{1-p}}{p-1}\,, & p\neq 1
\\
\displaystyle\ln(1+u)\,, & p= 1\,,
\end{array}
\right.
$$
problem (\ref{pb}) is equivalent to
\begin{equation}\label{pbellitticoausiliariomio}
\begin{cases}
\displaystyle
-\div\left(b(x)\nabla v\right) + Bg(v){|\nabla v|^{2}} = f & \mbox{in $\Omega$,} \\
\hfill v = 0 \hfill & \mbox{on $\partial\Omega$,}
\end{cases}
\end{equation}
with
$$
g(s)=\left\{
\begin{array}{ll}
\displaystyle\frac{[1-(p-1)s]^{\frac{2p}{1-p}}}{([1-(p-1)s]^{\frac{1}{1-p}}-1)^\theta}\,, & p\neq 1
\\
\displaystyle\frac{e^{2s}}{(e^s-1)^\theta}\,, & p= 1\,.
\end{array}
\right.
$$
It is easy to prove that 
$g(s)s^{\theta}\to 1$, as $s\to 0^+$. Hence for every fixed $0<\varepsilon<B$
there exists $s_{\varepsilon}>0$
such that
$Bg(s)\geq \frac{B-\varepsilon}{s^{\theta}}$ for every $s \in (0,s_{\varepsilon}]$.
Theorem \ref{pbellitticoausiliario}  therefore applies to problem (\ref{pbellitticoausiliariomio}). 
We deduce that there is no $H^1_0(\Omega)$ solution
to problem (\ref{pbellitticoausiliariomio}) if either $\theta>2$, or $\theta=2$ and $\lambda_1(f)>\frac{\beta}{(B-\varepsilon)\alpha}$, for every $0<\varepsilon<B$. As a consequence there is no
$H^1_0(\Omega)$ solution
to problem (\ref{pb}) if either $\theta>2$, or $\theta=2$ and $\lambda_1(f)>\frac{\beta}{B\alpha}$.
\end{proof}

\section*{Acknowledgments} 
The author thanks  L. Orsina and M.M. Porzio for very helpful discussions on the subject of this paper and the anonymous referee for his suggestion to include Theorem \ref{thmnonexistence} in the paper.
Part of this work was done during a visit  to   La Sapienza Universit\`a di Roma whose hospitality is gratefully  
acknowledged.

% You may incorporate your references as follows in your main tex file.
% Using BibTex is not recommended but can be handled.

\medskip
% The data information below will be filled by AIMS editorial staff
Received xxxx 20xx; revised xxxx 20xx.
\medskip

\end{document}